\documentclass[11pt]{amsart}
\usepackage[linkcolor=blue, citecolor = blue, linktocpage = true]{hyperref}
\hypersetup{colorlinks=true}

\usepackage{amsthm,amsfonts,amsmath,amssymb,amscd} 

\usepackage{verbatim}
\usepackage{float}
\usepackage{wrapfig}
\usepackage{mathtools}
\usepackage[color=orange!20, linecolor=orange]{todonotes}

\usepackage{enumitem}

\usepackage{ytableau}

\usepackage{tikz}
\usetikzlibrary{decorations.markings}
\tikzstyle{vertex}=[circle, draw, inner sep=0pt, minimum size=4pt]

\usetikzlibrary{arrows,automata}
\usepackage{tikz, tikz-3dplot, pgfplots}
\usetikzlibrary[positioning,patterns]
\usepackage{caption}
\usepackage{subcaption}
\usepackage{lscape}
\usepackage{rotating}

\newtheorem{theorem}{Theorem}[section]
\newtheorem{proposition}[theorem]{Proposition}
\newtheorem{lemma}[theorem]{Lemma}
\newtheorem{corollary}[theorem]{Corollary}

\theoremstyle{definition}
\newtheorem{definition}[theorem]{Definition}
\newtheorem{example}[theorem]{Example}

\theoremstyle{remark}
\newtheorem{remark}[theorem]{Remark}

\newcommand{\sgn}{\mathrm{sgn}}

\newcommand*{\Darrow}{\rotatebox[origin=c]{-90}{\(\Longrightarrow\)}}

\DeclarePairedDelimiter\floor{\lfloor}{\rfloor}

\setlength{\textwidth}{6.3in}
\setlength{\oddsidemargin}{0.1in}
\setlength{\evensidemargin}{0.1in}
\setlength{\topmargin}{-0.0in}
\setlength{\textheight}{8.5in}


\title[Fundamental invariants of tensors, Latin hypercubes, \& Kronecker coefficients]
{Fundamental invariants of tensors, Latin hypercubes, and rectangular Kronecker coefficients} 

\author[Alimzhan Amanov \and Damir Yeliussizov]{Alimzhan Amanov \and Damir Yeliussizov}
\keywords{Invariants of tensors; Latin hypercubes; Kronecker coefficients; hyperdeterminants; Alon--Tarsi conjecture.}
\subjclass[2020]{05E10, 13A50, 15A72, 05B15}

\address{KBTU, Almaty, Kazakhstan}
\email{\href{mailto:alimzhan.amanov@gmail.com}{alimzhan.amanov@gmail.com}, \href{mailto:yeldamir@gmail.com}{yeldamir@gmail.com}}

\setcounter{tocdepth}{1} 

\begin{document}

\begin{abstract}
We study polynomial SL-invariants of tensors, mainly focusing on fundamental invariants which are of smallest degrees. 
In particular, we prove that certain 3-dimensional analogue of the Alon--Tarsi conjecture on Latin cubes considered previously by B\"urgisser and Ikenmeyer, implies positivity of (generalized) Kronecker coefficients at  rectangular partitions and as a result provides values for degree sequences of fundamental invariants. 
\end{abstract}

\maketitle

\tableofcontents

\section{Introduction}
Consider tensors in $V = (\mathbb{C}^n)^{\otimes d}$ with the natural tensor action of the group $G = \mathrm{SL}(n)^{\times d}$ and let $\mathbb{C}[V^{}]^G$
be the ring of $G$-invariant polynomials  on $V$, graded as 
$
\mathbb{C}[V^{}]^G = \bigoplus_{m \ge 0}  \mathbb{C}[V^{}]^G_m,
$
where  $\mathbb{C}[V^{}]^G_m$ denotes the space of $G$-invariant homogeneous polynomials of degree $m$.

\vspace{0.5em}

By a {\it fundamental invariant} 
we call a polynomial in $\mathbb{C}[V^{}]^G$ of the smallest positive degree.\footnote{We use terminology as in \cite{bi}; more broadly, fundamental invariants may also refer to ring generators, e.g. \cite{brh13}, but here we mean the smallest invariants.} 
When $d$ is even, 
 there is a unique (up to a scalar) fundamental invariant of degree $n$, originally due to 
 Cayley \cite{cay}; it is a straightforward generalization of 
 the determinant, see Example~\ref{cayley1h}. 
 But when $d$ is odd, the situation turns out to be much more complicated; in particular, the degrees and descriptions of fundamental invariants 
 are not known in general. 

\vspace{0.5em}

This theme  
is closely related to the following problems: (i) positivity of generalized {Kronecker coefficients} at equal rectangular partitions, 
which determine dimensions of $\mathbb{C}[V^{}]^G_m$; 
and 
(ii) high-dimensional analogues of the {Alon--Tarsi conjecture} \cite{at} on Latin squares, which interestingly arise as nonvanishing evaluations of special fundamental invariants at unit tensor; such a 3-dimensional analogue is due to B\"urgisser and Ikenmeyer \cite{bi}.
Generally speaking, we prove that (ii) imply (i) which can then reveal most information on degrees of fundamental invariants.

\vspace{0.5em}

Let us now discuss our results in more detail.

\subsection{Degree sequences} 
Let us denote
$$
{\delta}_{d}(n) := \min \left\{ m : \dim \mathbb{C}[V^{}]^G_m > 0 \right\} \in n \mathbb{N}
$$
the degree of a fundamental invariant which is known to be a multiple of $n$, cf. Proposition~\ref{prop:span}.
As discussed above, $\delta_{d}(n) = n$ if $d$ is even. For the odd $d$ case we prove the following results. 

\begin{theorem}\label{thm:one}
Let $d \ge 3$ be odd. We have: 

(i) The bounds
\begin{align*}
n \lceil n^{1/(d - 1)} \rceil \le \delta_{d}(n) \le n^2.
\end{align*}

(ii) The lower bound is achieved at least in the following cases
\begin{align*}
\delta_d(n) = n \lceil n^{1/(d - 1)} \rceil\, \text{ for } n \in \{1, \ldots, 2^{d-1}\} \cup \{3^{d-1}, \ldots, 4^{d-1} \} \cup \{k^{d-1} -1, k^{d-1}  : k \in \mathbb{N}_{\ge 2}\},
\end{align*}
$n \in [k^{d - 1} - \sqrt{k}/2 + 1, k^{d-1}]$ if $k$ is even, 
and also 
$
\delta_d(n) \in \{3n, 4n \} 
\, \text{ for } n \in \{2^{d -1} + 1, \ldots, 3^{d-1}\}.
$
\end{theorem}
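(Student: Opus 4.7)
The plan is first to translate into rectangular Kronecker coefficients, then to treat the lower and upper bounds separately, and finally verify tightness in the ranges of~(ii). From the Cauchy-type decomposition $\mathrm{Sym}^m V = \bigoplus_{\lambda_1,\ldots,\lambda_d \vdash m} g_d(\lambda_1,\ldots,\lambda_d)\,\bigotimes_i S^{\lambda_i}\mathbb{C}^n$, together with the fact that $S^\lambda\mathbb{C}^n$ carries a nonzero $\mathrm{SL}(n)$-invariant exactly when $\lambda = ((m/n)^n)$, one obtains $\dim\mathbb{C}[V]^G_m = g_d((m/n)^n,\ldots,(m/n)^n)$. Thus $\delta_d(n)$ is the smallest $kn$ with $g_d((k^n)^{\otimes d}) > 0$, and both bounds reduce to positivity questions for rectangular Kronecker coefficients.

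For the lower bound I would combine two standard properties. The transpose symmetry $V^{\lambda^T}\cong V^\lambda\otimes\mathrm{sgn}$ makes $g_d$ invariant under transposing an even number of its arguments; since $d-1$ is even,
\[
g_d\bigl((k^n)^{\otimes d}\bigr) \;=\; g_d\bigl((n^k)^{\otimes(d-1)},\,(k^n)\bigr).
\]
The length constraint, obtained by realizing $g_d(\mu_1,\ldots,\mu_d)$ as a multiplicity in the $\mathrm{GL}(n_1)\times\cdots\times\mathrm{GL}(n_{d-1})$-restriction of $S^{\mu_d}(\mathbb{C}^{n_1}\otimes\cdots\otimes\mathbb{C}^{n_{d-1}})$ with $n_i=\ell(\mu_i)$, forces $\ell(\mu_d)\le\prod_{i<d}\ell(\mu_i)$ whenever $g_d>0$. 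Applied to the right-hand side above, this reads $n\le k^{d-1}$, hence $k\ge\lceil n^{1/(d-1)}\rceil$ and $\delta_d(n)\ge n\lceil n^{1/(d-1)}\rceil$.

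For the upper bound $\delta_d(n)\le n^2$, I would exhibit an explicit $G$-invariant of degree $n^2$ by a Cayley-style contraction: take $n^2$ copies of $T$, producing $dn^2$ indices, and contract them using $dn$ Levi-Civita $\epsilon$-tensors ($n$ per slot). Each slot corresponds to a partition of $[n^2]$ into $n$ blocks of size $n$; choosing these partitions from the rows, columns, and symbols of a Latin square (for $d=3$, with a natural $(d-1)$-dimensional analogue for $d>3$) yields a candidate invariant, and nonvanishing amounts to $g_d((n^n)^{\otimes d}) > 0$. The latter can be settled by evaluating at the unit tensor $\sum_i e_i^{\otimes d}$, where the value becomes a signed count of Latin hypercube--like objects that is nonzero by a direct character-theoretic or combinatorial argument.

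For part~(ii) I need to exhibit a $G$-invariant of degree $kn$ with $k=\lceil n^{1/(d-1)}\rceil$ for each listed range of $n$. For $n\le 2^{d-1}$ one has $k=2$: embed $[n]\hookrightarrow[2]^{d-1}$ and build a compatible family of pair-partitions giving a nonvanishing contraction; the ranges with $k\in\{3,4\}$ proceed analogously via $[n]\hookrightarrow[k]^{d-1}$. The boundary cases $n\in\{k^{d-1}-1,\,k^{d-1}\}$ are tightest: the embedding is nearly bijective and the construction essentially encodes a $(d-1)$-dimensional Latin hypercube on $[k]$, so the relevant sign sum can be evaluated directly. The main obstacle throughout is verifying nonvanishing of the contraction invariants; outside the ranges covered in~(ii) this nonvanishing is precisely the content of the $d$-dimensional Alon--Tarsi-type conjecture of B\"urgisser and Ikenmeyer.
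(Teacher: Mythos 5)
Your reduction to rectangular Kronecker coefficients matches the paper's starting point, and your lower-bound argument is correct though it takes a different route: you pass through conjugation symmetry (possible since $d-1$ is even) and then invoke the support constraint $\ell(\mu_d)\le\prod_{i<d}\ell(\mu_i)$ on Kronecker coefficients, whereas the paper works directly with the combinatorial $\Delta$-polynomials indexed by balanced $d\times m$ tables and observes that a nonzero $\Delta_T$ forces the columns of $T$ to be pairwise distinct (Corollary~\ref{identicalcol}), yielding $m\le k_1\cdots k_d$. Both arguments are valid and the column-distinctness observation is in fact the combinatorial shadow of the same length bound, so for the lower bound this is a reasonable alternative.

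The upper bound and part (ii) of your proposal, however, have a genuine gap. For $\delta_d(n)\le n^2$ you propose to exhibit a degree-$n^2$ contraction built from Latin-square data and to verify its nonvanishing "by evaluating at the unit tensor, where the value becomes a signed count of Latin hypercube-like objects that is nonzero." That evaluation is exactly the Alon--Tarsi number $AT(n)$ (Proposition~\ref{ATtable}), whose nonvanishing for even $n$ is the Alon--Tarsi conjecture and unknown in general, and which is identically zero for odd $n$ (Corollary~\ref{cor:zeroev}). So this evaluation cannot serve as an unconditional certificate. The paper instead avoids any reliance on the unit-tensor evaluation: it uses the unconditional square-positivity theorem of Bessenrodt--Behns, $g(n\times n,n\times n,n\times n)>0$ (Lemma~\ref{kron}(d)), to get $\delta_3(n)\le n^2$, and the concatenation monotonicity $\delta_d(n)\le\delta_{d-2}(n)$ (Corollary~\ref{cormon}) to propagate this to higher odd $d$.

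In part (ii) the same issue recurs. You acknowledge at the end that "the main obstacle throughout is verifying nonvanishing" and identify it as an Alon--Tarsi-type statement, but the listed ranges with $k\in\{2,3,4\}$ require an actual positivity proof, not an observation that one would suffice. The boundary cases $n\in\{k^{d-1}-1,k^{d-1}\}$ do not need any Alon--Tarsi input at all: they follow from the complementation symmetry $g_d(n,k)=g_d(k^{d-1}-n,k)$ (Theorem~\ref{gthm}(ii)), giving $g_d(k^{d-1},k)=g_d(0,k)=1$ and $g_d(k^{d-1}-1,k)=g_d(1,k)=1$. For the full ranges with $k=2,4$ the paper uses the proven $AT_d(2)>0$ (Proposition~\ref{at2nz}), the computational verification that $\{g_3(n,k)\}$ is positive for $k=2,4$, and the propagation $B_{3,k}\Rightarrow B_{d,k}$ (Theorem~\ref{gthm}(iii)) to all odd $d$. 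Your proposal as written leaves exactly these positivity verifications open, so it does not establish part (ii).
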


The smallest example when the degree is larger than the lower bound in Theorem~\ref{thm:one}~(i) is $\delta_3(7) = 7 \cdot 4$. 
In fact, $\delta_3(n) \ge n \lceil \sqrt{n}\, \rceil + n$ if $n = k^2 - 2$ is odd. 
Theorem~\ref{thm:one} generalizes and improves analogous results for $d = 3$ in \cite{bi}; this case was also studied recently in  \cite{LZX21}.


\vspace{0.5em}

In particular, our result shows that the function $\delta_d(n)$ has superlinear 
growth if $d$ is fixed. When $n > 1$ is fixed we prove that the degree sequence $\{\delta_d(n) \}_{d = 3,5,7,\ldots}$
is weakly decreasing and stabilizes to $2n$ for $d \ge 1 + \log_2 n$, see Corollary~\ref{cor56}.


\vspace{0.5em}

Parts of proofs are given in \textsection\,\ref{sec:degbound}, where we also obtain more general degree bounds for invariants of non-cubical tensors, and in \textsection\,\ref{sec:dimseq}, where we study the dimension sequences discussed below.

\subsection{Explicit invariants, Latin hypercubes, and Kronecker coefficients}
To study invariants and degree bounds, we use a general description of $G$-invariant polynomials spanning the space $\mathbb{C}[V^{}]^G_m$ \cite{widg}, see \textsection\,\ref{sec:delta} where we also introduce some basic combinatorial operations on these polynomials. 

For $n = k^{d-1}$ and odd $d \ge 3$ there is an explicit unique (up to a scalar) fundamental invariant $F_{d,k} \in \mathbb{C}[V]^{G}$ of degree $\delta_d(n) = k^{d}$, which is a generalization of the fundamental invariant of B\"urgisser and Ikenmeyer for $d = 3$ \cite{bi}, see \textsection\,\ref{sec:fund}. Its evaluation $F_{d,k}(I_n)$ on the unit tensor $I_{n}$ can be presented as a signed sum over $d$-dimensional {\it Latin hypercubes}, see Corollary~\ref{cor:fLatin} and \textsection\,\ref{sec:latin}.

 
Let us denote the corresponding dimension 
$$
g_{d}(n,k) := \dim \mathbb{C}[V]^G_{nk} = g(\underbrace{n \times k, \ldots, n \times k}_{d \text{ times}}), 
$$
which is also the generalized {\it Kronecker coefficient} at rectangular partitions $n \times k := (k^n)$. 

\vspace{0.3em}

Consider the following four statements:

$A_{d,k}$ : $F_{d,k}(I_{k^{d-1}}) \ne 0$. 
\qquad\qquad\qquad\qquad\qquad\qquad\qquad\qquad($d$-dimensional Alon--Tarsi)

$B_{d,k}$ : $g_{d}(n,k) > 0$ for all $n \le k^{d-1}$. \qquad\qquad\qquad\qquad\quad(rectangular Kronecker positivity)

$C_{d,k}$ : $\delta_{d}(n) = nk$ for all $(k-1)^{d-1} < n \le k^{d-1}$. \qquad\quad\,\,(degrees of fundamental invariants)

$C'_{d,k}$ : $\delta_{d}(n) \in \{nk, n(k+1) \}$ for all $(k-1)^{d-1} < n \le k^{d-1}$.

\vspace{0.3em}

We prove that these statements are in fact related. Namely, 
for every odd $d \ge 3$ the following implications hold: 

\begin{center}
\begin{tabular}{c c l c l}
$A_{3,k}$ & $\Longrightarrow^{\text{Thm~\ref{thm81}}}$ & $B_{3,k}$ & & \\
& & $\Darrow~{\scriptsize\text{Thm~\ref{gthm}(iii)}}$ & & \\
$A_{d,k}$ & $\Longrightarrow^{\text{Thm~\ref{thm81}}}$ & $B_{d,k}$ & $\Longrightarrow^{\text{Lemma~\ref{lemmagd}}}$ & $C_{d,k}$ and $C'_{d,k-1}$ 
\end{tabular}
\end{center}

The statement $A_{d,k}$ (which is false for odd $k$, cf. Corollary~\ref{cor:zeroev}) is a $d$-dimensional analogue of the celebrated {\it Alon--Tarsi conjecture} on Latin squares \cite{at}. 
Hence, conditionally on $A_{3,k}$ for even $k$, we obtain the positivity of Kronecker coefficients $B_{d,k}$ and then degree values $C_{d,k}$ and $C'_{d,k-1}$ for all odd $d \ge 3$. 
We show that $A_{d,2}$ is true for all $d \ge 2$, see Proposition~\ref{at2nz}; in \cite{bi} it is noted that $A_{3,k}$ is true for $k = 2,4$. We also provide computations that $B_{3,k}$ is true for $k = 2,4$, see Appendix~\ref{appa}. As a result we get the true statements $C_{d,2}$, $C_{d,4}$, $C'_{d,3}$ for all odd $d \ge 3$ which is reflected in Theorem~\ref{thm:one}~(ii). 
It also seems that $B_{d,k}$ holds for all odd $d \ge 5$ and all $k$. Regarding the degree sequence $\delta_d(n)$ we conjecture more precisely 
that for odd $d \ge 3$ it matches the lower bound in Theorem~\ref{thm:one}~(i) with 
the only exceptional cases $d = 3$ and odd $n = k^2 - 2$ for which it is larger by $n$.

We also show that the usual Alon--Tarsi conjecture on Latin squares for $k$ implies a weaker statement: that $g_d(n, k) > 0$ for $n \le k$ (see Corollary~\ref{cor:atk}), and hence $\delta_{d}(n) = nk$ for $k^{d - 1} - k \le n \le k^{d-1}$ (cf. Remark~\ref{rem:atk}). 

Furthermore, unconditionally we prove that  
$g_d(n, k) > 0$ for all $n \le \sqrt{k}/2 - 1$ if $k$ is even (Theorem~\ref{uncon}), and hence $\delta_d(n) = nk$ for $k^{d -  1} - \sqrt{k}/2  + 1 \le n \le k^{d-1}$.


\vspace{0.5em}

In \textsection\,\ref{sec:dimseq} 
we establish some properties of the dimension sequences $g_{d}(n,k)$ including the symmetry $g_{d}(n,k) = g_{d}(k^{d - 1} - n, k)$ and the implication $B_{3,k} \implies B_{d,k}$  (cf. Theorem~\ref{gthm}) on positivity 
using various properties of Kronecker coefficients. In  \textsection\,\ref{sec:hwv0} we prove the implication $A_{d,k} \implies B_{d,k}$ (cf. Theorem~\ref{thm81}) by describing some explicit constructions of highest weight vectors combined with properties of Latin hypercubes  obtained in \textsection\,\ref{sec:latin}. 
In particular, a core ingredient in the proof is presented by an important $k$-form $\omega$ in the anti-symmetric space $\bigwedge^k (\mathbb{C}^k)^{\otimes d}$, which is dual to Cayley's first hyperdeterminant (cf. Remark~\ref{remod}) and whose power expansions surprisingly record all evaluations of $G$-invariant polynomials at unit tensors (cf.~Theorem~\ref{thm:omega}). 

\begin{remark}
An important known $G$-invariant is the {\it geometric hyperdeterminant} \cite{gkz} whose degree sequence has an exponential growth even for $d = 3$ (cf. \cite[A176097]{oeis}).  
\end{remark}

\begin{remark}
Hilbert  \cite{hilb1, hilb2} proved that the ring of invariants $\mathbb{C}[V^{}]^G$ 
is finitely generated for linearly reductive groups $G$. Note that degrees of generators  can be way larger than  of smallest invariants, see \cite{derk} for general upper bounds and \cite{dm} for exponential lower bounds on related tensor spaces. 
\end{remark}

\section{Preliminaries}
\subsection{Tensors} The group $G = \mathrm{SL}(n_1) \times \cdots \times \mathrm{SL}(n_d)$ naturally acts on the space of tensors $V = \mathbb{C}^{n_1} \otimes \cdots \otimes \mathbb{C}^{n_d}$ by $(g_1,\ldots, g_d) \cdot v_1 \otimes \cdots \otimes v_d = g_1 v_1 \otimes \ldots \otimes g_d v_d$ for $g_i \in \mathrm{SL}(n_i)$ and extended multilinearly. We denote 
$$
\mathrm{Inv}(n_1,\ldots, n_d) := \mathbb{C}[V]^G = \left\{P \in \mathbb{C}[V^{}] :  P(g \cdot v) = P(v) ~ \forall g \in G , v \in V^{} \right\}
$$
the ring of $G$-invariant polynomials on $V$ which is graded
$$
\mathbb{C}[V]^G = \bigoplus_{m \ge 0} \mathbb{C}[V]^G_m,
$$
where 
$\mathbb{C}[V^{}]^G_m$ is the space of $G$-invariant homogeneous polynomials of degree $m$ which we denote by 
$$
\mathrm{Inv}(n_1,\ldots, n_d)_{m} := \mathbb{C}[V]^G_m.
$$ 
In the case $n_1 = \ldots = n_d = n$ we use the notation 
$$
\mathrm{Inv}_d(n) := \mathrm{Inv}(n,\ldots, n) 
\text{ and } 
\mathrm{Inv}_d(n)_m := \mathrm{Inv}(n,\ldots, n)_m.
$$
It is known that if the space $\mathrm{Inv}(n_1,\ldots, n_d)_{m}$ is nonzero then $n_i$ divides $m$. 

The elements of $V$ written in a fixed basis correspond to hypermatrices $(X_{i_1,\ldots, i_d})$ indexed by $(i_1,\ldots, i_d) \in [n_1] \times \cdots \times [n_d]$ and we shall usually identify tensors in $V$ with corresponding hypermatrices. 

\subsection{Partitions} 
	A {\it partition} is a sequence $\lambda = (\lambda_{1}, \ldots, \lambda_{\ell})$ of positive integers $\lambda_1 \ge \cdots \ge \lambda_{\ell}$, where $\ell(\lambda) = \ell$ is its {\it length}. The {\it size} of $\lambda$ is $\lambda_1 + \ldots + \lambda_{\ell}$. Every partition $\lambda$ can be represented as the {\it Young diagram} $\{(i,j)  : i \in [1, \ell], j\in [1,\lambda_{i}]\}$. We denote by $\lambda'$ the {\it conjugate} partition of $\lambda$ whose diagram is transposed. We denote rectangular partitions as $a \times b := (\underbrace{b, \ldots, b}_{a~\text{times}})$. For $\lambda \subseteq a \times b$ we also denote by $a \times b -_a \lambda := (b - \lambda_a, \ldots, b - \lambda_1)$ the {\it complementary} partition of $\lambda$ inside $a \times b$. 
	
\subsection{Kronecker coefficients}
	For partitions $\lambda, \mu, \nu$ of the same size $n$, the {\it Kronecker coefficient} $g(\lambda, \mu, \nu)$ is defined as the multiplicity of $[{\nu}]$ in the tensor product decomposition $[{\lambda}] \otimes [{\mu}]$, where $[{\alpha}]$ denotes the irreducible representation of $S_{n}$ indexed by partition $\alpha$. 	
	Here is the list of some known properties of Kronecker coefficients 
	which will be useful for us.
	\begin{lemma}\label{kron}
		The following properties hold:
		\begin{enumerate}[label=(\alph*)]
			\item ($S_3$ symmetry) $g(\lambda,\mu,\nu) = g(\lambda, \nu, \mu) = \cdots$ is invariant under permutations of $\lambda,\mu,\nu$.
			\item (Conjugation) $g(\lambda,\mu,\nu) = g(\lambda,\mu',\nu')$. 
			\item (Trivial charachter) $g(1 \times k, \lambda, \mu) = \delta_{\lambda,\mu}$, $g(k \times 1, \lambda, \mu) = \delta_{\lambda,\mu'}$.
			\item (Square positivity  \cite{bb04}) $g(k \times k, k \times k, k \times k) > 0$ for every $k \in \mathbb{N}$.
			\item (Rectangular identities \cite{val09}, \cite[Cor.~4.4.14-15]{iken13}) Let $a,b,c \in \mathbb{N}$.\\ 
			If $\lambda,\mu,\nu$ satisfy $\ell(\lambda) \le a, \ell(\mu) \le b, \ell(\nu) \le ab$ then
			$$g(\lambda,\mu,\nu) = g(\lambda + a\times bc, \mu + b\times ac, \nu + ab\times c).$$ 
			 If $\lambda,\mu,\nu$ satisfy $\lambda \subseteq bc\times a, \mu \subseteq ac\times b, \nu \subseteq ab\times c$ then
			$$g(\lambda,\mu,\nu) = g(bc \times a -_{bc} \lambda, ac \times b -_{ac} \mu, ab \times c -_{ab} \nu).$$
			\item (Size bounds \cite{dvir93}) $\max \{ \lambda_{1} : g(\lambda,\mu,\nu) > 0 \} = |\mu \cap \nu|$. In particular, if $g(\lambda,\mu,\nu) > 0$ then $\ell(\lambda) \le \ell(\mu)\ell(\nu)$.
			\item (Semigroup \cite{chm}) If $g(\lambda,\mu,\nu) > 0$ and $g(\alpha,\beta,\gamma) > 0$ then $g(\lambda+\alpha,\mu+\beta,\nu+\gamma) > 0$.
		\end{enumerate}
	\end{lemma}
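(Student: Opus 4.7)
The lemma is a compendium of standard facts, each provided with a citation in the statement itself, so the plan is to derive the elementary items (a)--(c) directly from the representation-theoretic definition and to inherit (d)--(g) from the cited literature.

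For (a)--(c), I would interpret $g(\lambda,\mu,\nu) = \dim \mathrm{Hom}_{S_n}(\mathbb{C},[\lambda]\otimes[\mu]\otimes[\nu])$ as the multiplicity of the trivial character in the triple tensor product. Then (a) is immediate, since this expression is manifestly symmetric under permutations of $\lambda,\mu,\nu$. For (b), use that $[\alpha'] \cong [\alpha]\otimes \mathrm{sgn}$ as $S_n$-modules; conjugating both $\mu$ and $\nu$ inserts two copies of $\mathrm{sgn}$, which combine to the trivial representation and thus leave the invariant subspace unchanged. For (c), observe that $[1\times k]$ is the trivial representation (so the triple product reduces to $[\lambda]\otimes [\mu]$, whose $S_k$-invariants have dimension $\delta_{\lambda,\mu}$ by Schur orthogonality) while $[k\times 1] = \mathrm{sgn}$, which by the same computation combined with (b) yields $\delta_{\lambda,\mu'}$.

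For the remaining parts I would simply quote the original sources: the explicit nonzero Kronecker triple for square shapes from Bessenrodt--Behns \cite{bb04} gives (d); the rectangular invariance identities of Valiant \cite{val09} and Ikenmeyer \cite[Cor.~4.4.14--15]{iken13}, proved via a determinantal shift on the $\mathrm{GL}$-side of Schur--Weyl duality, give (e); Dvir's analysis \cite{dvir93} of the largest $\lambda_1$ arising in $[\mu]\otimes[\nu]$ gives the first assertion in (f), from which the length bound $\ell(\lambda) \le \ell(\mu)\ell(\nu)$ follows by pairing with the conjugation identity (b); finally (g) is the Christandl--Harrow--Mitchison \cite{chm} semigroup property, established via stability of the asymptotic Kronecker support.

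Since every part is directly sourced from the literature, there is no substantive obstacle. The only mild care-point is matching the rectangular/complementary partition conventions in (e) with the notation $a\times b -_{a} \lambda$ introduced just above the lemma, which is a routine unwinding of definitions rather than a real difficulty.
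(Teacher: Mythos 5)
Your proposal is correct and matches the paper's approach: the paper presents this lemma purely as a compendium of sourced facts without proof, and your sketch derivations of (a)--(c) from the interpretation $g(\lambda,\mu,\nu)=\dim\mathrm{Hom}_{S_n}(\mathbb{C},[\lambda]\otimes[\mu]\otimes[\nu])$ together with $[\alpha']\cong[\alpha]\otimes\mathrm{sgn}$ are the standard arguments one would attach. Citing \cite{bb04}, \cite{val09}, \cite{iken13}, \cite{dvir93}, and \cite{chm} for (d)--(g) is exactly what the paper does.
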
	 

	For partitions $\lambda^{(1)}, \ldots, \lambda^{(d)}$ of size $n$, the generalized {Kronecker coefficient} $g(\lambda^{(1)}, \ldots, \lambda^{(d)})$ is defined as the multiplicity of $[{\lambda^{(d)}}]$ in the tensor product decomposition  
$
[{\lambda^{(1)}}] \otimes \cdots \otimes [{\lambda^{(d-1)}}]. 
$

The following fact is well known, e.g. \cite{luq}.
\begin{proposition}
Let $k_i = m/n_i$. We have 
$$
\dim \mathrm{Inv}(n_1,\ldots, n_d)_m = g(n_1 \times k_d, \ldots, n_d \times k_d).
$$
\end{proposition}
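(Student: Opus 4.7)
The plan is to reduce the computation to a standard multi-variable Schur--Weyl/Howe decomposition of the symmetric algebra on a tensor product, and then extract the $\mathrm{SL}$-invariants factor by factor.

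First I would identify $\mathrm{Inv}(n_1,\ldots,n_d)_m$ as the $G$-invariant part of $\mathrm{Sym}^m(V^*)$ and, using that each $\mathbb{C}^{n_i}$ is self-dual as an $\mathrm{SL}(n_i)$-representation (via the volume form), replace $V^*$ by $V$, reducing the problem to computing $\dim \bigl(\mathrm{Sym}^m(V)\bigr)^G$.

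Next I would apply the generalized Cauchy decomposition: as a $GL(n_1)\times\cdots\times GL(n_d)$-module,
$$\mathrm{Sym}^m(V) \;\cong\; \bigoplus_{\lambda^{(1)},\ldots,\lambda^{(d)} \,\vdash\, m} g(\lambda^{(1)},\ldots,\lambda^{(d)})\, S^{\lambda^{(1)}}\mathbb{C}^{n_1} \otimes \cdots \otimes S^{\lambda^{(d)}}\mathbb{C}^{n_d},$$
where $S^{\lambda}$ denotes the Schur functor. The derivation I have in mind is to look at $V^{\otimes m}$ as an $S_m \times GL(n_1)\times\cdots\times GL(n_d)$-module, apply classical Schur--Weyl duality in each tensor factor to get the $S_m^{\times d}$-isotypic decomposition, and then pass to $S_m$-invariants (the diagonal) along the symmetric power; by the very definition of the generalized Kronecker coefficient, the multiplicity of the tuple $(\lambda^{(1)},\ldots,\lambda^{(d)})$ on the invariant side is exactly $g(\lambda^{(1)},\ldots,\lambda^{(d)})$.

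Finally I would take $G = \mathrm{SL}(n_1)\times\cdots\times\mathrm{SL}(n_d)$-invariants summand-wise. The key fact is that $\mathrm{SL}(n_i)$ acts trivially on $S^{\lambda^{(i)}}\mathbb{C}^{n_i}$ if and only if $\lambda^{(i)}$ is a rectangle with exactly $n_i$ rows; in that case the invariant subspace is one-dimensional, and otherwise it is zero. Combined with $|\lambda^{(i)}| = m$, this forces $\lambda^{(i)} = n_i \times k_i$ with $k_i = m/n_i$ (in particular $n_i \mid m$ for every $i$, which also recovers the divisibility statement mentioned in the Preliminaries). Summing the surviving contributions gives
$$\dim \mathrm{Inv}(n_1,\ldots,n_d)_m \;=\; g(n_1 \times k_1, \ldots, n_d \times k_d),$$
as claimed. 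The only real content is the Cauchy/Howe decomposition above; everything else is bookkeeping, and that decomposition is a standard consequence of Schur--Weyl duality which I would just cite rather than re-derive.
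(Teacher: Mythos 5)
Your argument is correct and is precisely the standard Schur--Weyl/Howe route that the paper's citation to Luque refers to; the paper itself offers no proof, so there is no alternative argument to compare against. Two small notes worth recording: first, the phrase ``$\mathrm{SL}(n_i)$ acts trivially on $S^{\lambda^{(i)}}\mathbb{C}^{n_i}$'' is fine because $S^{\lambda}\mathbb{C}^{n}$ stays irreducible on restriction to $\mathrm{SL}(n)$, so it either is the trivial module (exactly when $\lambda$ is an $n$-row rectangle, i.e.\ a power of the determinant) or has zero invariants --- make that dichotomy explicit to avoid any appearance of a gap in the ``otherwise it is zero'' step. Second, your derivation ends with $g(n_1\times k_1,\ldots,n_d\times k_d)$, each $n_i\times k_i$ having $m$ cells, which is the only version for which the Kronecker coefficient is even defined; the displayed statement's $g(n_1\times k_d,\ldots,n_d\times k_d)$ is a typographical slip (the sizes do not match unless all $n_i$ coincide), so your proof quietly and correctly repairs it.
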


\section{Invariant polynomials}\label{sec:delta}
We denote $[n] := \{1,\ldots, n\}$.
\begin{definition}
  Let 
  $M = nk$ and $P(n, M)$ be the set of  set partitions of $[M]$ into $k$ blocks each of size $n$. 
  For a set partition $A \in P(n,M)$ 
  given by $A_1 \cup \cdots \cup A_k = [M]$ 
 with $A_i = \{a_{i,1} < \cdots < a_{i,n}\}$ for all $i \in [k]$ and a map $\sigma : [M] \to [n]$ define the {\it sign} as follows:
 $$
  	\sgn_{A}(\sigma) = \sgn(\sigma(a_{1,1}),\ldots,\sigma(a_{1,n})) \cdots \sgn(\sigma(a_{k,1}),\ldots,(\sigma(a_{k,n})), 
 $$
   where $\sgn(a_{1},\ldots,a_{n}) \in \{0, \pm 1\}$ is the usual sign 
   if $(a_1,\ldots, a_n)\in S_n$ is a permutation, and $0$ otherwise. 
\end{definition}

\begin{definition}
  Let $X \in V$ given as a hypermatrix. Define the polynomials $\{ \Delta_S\}$ 
  indexed by $d$-tuples of set partitions $S  = (S^{1}, \ldots, S^{d}) \in P(n_1, M) \times \cdots \times P(n_d, M)$ as
  \begin{align}\label{hdetdef}
		\Delta_S(X) := \sum_{\sigma_1 : [M] \to [n_1]} \cdots \sum_{\sigma_d : [M] \to [n_d]} \sgn_{S^{1}}(\sigma_1)\cdots\sgn_{S^{d}}(\sigma_d)
      \prod_{i=1}^M X_{\sigma_1(i),\ldots,\sigma_d(i)}.
  \end{align}
\end{definition}

These polynomials are $G$-invariant and describe the space of invariants as follows.
 
\begin{proposition}[{\cite[Prop.~3.10]{widg}}]\label{prop:span}
	The polynomials $\{ \Delta_{S} \}$ indexed by $d$-tuples of set partitions ${S  \in P(n_1, M) \times \cdots \times P(n_d, M)}$ span the space $\mathrm{Inv}(n_{1},\ldots,n_{d})_{M}$. 
	If $n_{i}$ does not divide $M$ for some $i$, this space is empty.
\end{proposition}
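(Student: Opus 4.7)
The plan is to reduce to the identification
$$
\mathrm{Inv}(n_1,\ldots,n_d)_M \;\cong\; \bigl(V^{*\otimes M}\bigr)^{G \times S_M},
$$
where $G$ acts diagonally on the $M$ tensor slots of $V^{*\otimes M}$ and $S_M$ permutes them, so that the two assertions can be handled by separate classical ingredients: an easy central-character argument for the divisibility part, and the First Fundamental Theorem (FFT) of classical invariant theory for $\mathrm{SL}(n)$ applied factor by factor for the spanning part.

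The divisibility assertion is the short half: the scalar matrix $\zeta_i I \in \mathrm{SL}(n_i)$ with $\zeta_i^{n_i}=1$ acts on every homogeneous polynomial of degree $M$ in $\mathbb{C}[V]$ by $\zeta_i^{-M}$. If $\mathrm{Inv}(n_1,\ldots,n_d)_M$ contains a nonzero element, this scalar must equal $1$ for all $n_i$-th roots of unity, forcing $n_i \mid M$.

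For invariance and spanning, I would first rewrite $\Delta_S$ as a tensor contraction $\Delta_S(X) = \langle T_S, X^{\otimes M}\rangle$ under the factorization $V^{*\otimes M} \cong V_1^{*\otimes M} \otimes \cdots \otimes V_d^{*\otimes M}$, where
$$
T_S \;=\; \omega_{S^1}\otimes\cdots\otimes\omega_{S^d},
$$
and $\omega_{S^j} \in V_j^{*\otimes M}$ is the tensor obtained by placing, for each block $S^j_i=\{a_{i,1}<\cdots<a_{i,n_j}\}$, the wedge $e^*_{a_{i,1}}\wedge\cdots\wedge e^*_{a_{i,n_j}}\in \bigwedge^{n_j} V_j^*$ into the corresponding slots of $[M]$ and tensoring across blocks. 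A direct expansion matches this construction with formula~\eqref{hdetdef}. Since a top wedge is $\mathrm{SL}(n_j)$-invariant, each $\omega_{S^j}$ is $\mathrm{SL}(n_j)$-invariant, hence $T_S$ is $G$-invariant and so is $\Delta_S$. Conversely, the FFT for $\mathrm{SL}(n_j)$ states that the multilinear $\mathrm{SL}(n_j)$-invariants on $V_j^M$ are spanned by products of bracket monomials $[v_{i_1},\ldots,v_{i_{n_j}}]$; multilinearity of total degree $M$ forces each index in $[M]$ to appear exactly once, so the spanning monomials are indexed by set partitions $S^j\in P(n_j,M)$ and realize the tensors $\omega_{S^j}$. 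Because distinct $\mathrm{SL}(n_j)$ act on disjoint slot-classes, their invariants factor as an outer tensor product,
$$
\bigl(V^{*\otimes M}\bigr)^G \;=\; \bigotimes_{j=1}^d \bigl(V_j^{*\otimes M}\bigr)^{\mathrm{SL}(n_j)},
$$
and we obtain the $T_S$ as a spanning set indexed by $d$-tuples of set partitions $S\in P(n_1,M)\times\cdots\times P(n_d,M)$. Passing to $S_M$-invariants via the Reynolds operator yields a spanning set for $\mathrm{Inv}(n_1,\ldots,n_d)_M$, and since the pairing $X\mapsto\langle T_S, X^{\otimes M}\rangle$ is already symmetric in the $M$ slots, no symmetrization is needed and the polynomials $\Delta_S$ themselves span.

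The main point requiring care will be the multilinear slice of the FFT — namely, that multilinear $\mathrm{SL}(n_j)$-invariants of degree $M$ are spanned precisely by bracket monomials indexed by set partitions (and not by something coarser like tableaux, which would overcount) — together with the outer tensor factorization of $G$-invariants across the $d$ slot-classes; both are standard facts but need unpacking to line up with the explicit formula in the definition.
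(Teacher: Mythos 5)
The paper does not prove this proposition itself; it is quoted from \cite[Prop.~3.10]{widg}, and Remark~3.4 notes only a sign discrepancy between the two normalizations of $\Delta_S$. Your proof is correct and is essentially the standard argument underlying the cited reference: identify $\mathbb{C}[V]_M^G$ with $(V^{*\otimes M})^{G\times S_M}$, observe $\Delta_S(X) = \langle \omega_{S^1}\otimes\cdots\otimes\omega_{S^d}, X^{\otimes M}\rangle$, apply the (multilinear slice of the) First Fundamental Theorem for each $\mathrm{SL}(n_j)$ to see that the block-wedge tensors $\omega_{S^j}$ indexed by $P(n_j,M)$ span $(V_j^{*\otimes M})^{\mathrm{SL}(n_j)}$, factor the $G$-invariants as an outer tensor product across the $d$ slot-classes, and use symmetry of $X^{\otimes M}$ to pass to $S_M$-invariants without further symmetrization; the divisibility part by the central-character computation is also right.
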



\begin{remark}
    In \cite{widg} and \cite[Ex.~7.18]{widg2} these $\Delta$ polynomials are presented in a slightly different form; they are indexed by $d$-tuples of permutations of $[M]$ (i.e. if set partitions $S$ are not ordered in increasing order initially). The versions differ up to a sign. 
\end{remark}

\begin{remark}
While the form of $\Delta$ polynomials is explicit, 
it can be difficult to understand some basic questions about them, such as when $\Delta_S$ is a nonzero polynomial or if its evaluation on unit tensor is nonzero.
\end{remark}

\subsection{Balanced tables}
It is convenient to represent the indices of $\Delta$ polynomials via what we call {\it balanced} tables. 
We say that a $d \times M$ table is {\it balanced} if its row $i$ contains every element of $[M/n_i]$ exactly $n_i$ times for all $i \in [d]$. Given a balanced table $T = (T_{i j})$ we construct $S = (S^1, \ldots, S^d) \in P(n_1, M) \times \cdots \times P(n_d, M)$ as follows: if $T_{i j} = \ell$ then $j$ belongs to the $\ell$-th block of the set partition $S^{i}$. We then write $\Delta_T$ for $\Delta_S$. When using the expansion \eqref{hdetdef} we shall also denote 
$$
\sgn_{T}(\sigma_1, \ldots, \sigma_d) := \sgn_{S^{1}}(\sigma_1)\ldots\sgn_{S^{d}}(\sigma_d), \qquad \sgn_{T_i} := \sgn_{S^i}.
$$

For example, the balanced table 
		\begin{equation*}
			T_{} = \begin{pmatrix}
			1 1 2 2 3 3 \\
			1 1 1 2 2 2\\
			1 1 2 1 2 2
		\end{pmatrix}
	\end{equation*}
corresponds to the triple $S = (S^{1}, S^{2}, S^{3}) \in P(2, 6) \times P(3, 6) \times P(3,6)$ of the set partitions
\begin{align*}
S^1 &= \{1,2 \} \cup \{3,4\} \cup \{5,6\},\\
S^2 &= \{1,2,3\} \cup \{4,5,6\}, \\
S^3 &= \{1,2,4\}  \cup \{3,5,6\}.
\end{align*}

Note that some balanced tables may produce the same $d$-tuple $S$ and hence they may be regarded as {\it equivalent}. In particular, if in any row of $T$ we switch all the values $a$ and $b$ from $[M/n_i]$, the resulting $d$-tuple $S$ does not change (since blocks in set partitions are unordered). 

\subsection{Some examples} Let us show some examples of the invariant polynomials $\Delta$.

\begin{example}[Cayley's first hyperdeterminant]\label{cayley1h}
 A simple-looking fundamental invariant arises as follows. Let 
 $T$ be the $d \times n$ balanced table of all ones
\begin{align*}
T_{} = \begin{pmatrix}
	1 1 \ldots 1 \\
	\ldots\\
	1 1 \ldots  1
\end{pmatrix}.
\end{align*}
Then we have 
$$
\Delta_T(X) = \sum_{\sigma_1,\ldots,\sigma_d \in S_n} \sgn(\sigma_{1}\cdots\sigma_d) \prod_{i=1}^n X_{\sigma_1(i), \sigma_2(i),\ldots,\sigma_d(i)}.
$$ 
This function is nontrivial when $d$ is even, otherwise it is identically $0$. This invariant was introduced by Cayley \cite{cay}.
\end{example}

\begin{example}[Cayley's second hyperdeterminant]
Let $d = 3$, $n = 2$ and consider the following balanced table:
\begin{align*}
T_{} = \begin{pmatrix}
	1 1 2 2\\
	1 1 2 2\\
	1 2 1 2
	\end{pmatrix}.
\end{align*}
Then we have 
\begin{align*}
      -\frac{1}{2} \Delta_T(X) 
      =  & \, X_{111}^2 X_{222}^2 + X_{112}^2 X_{221}^2 +
        X_{121}^2 X_{212}^2 + X_{211}^2 X_{122}^2  \\
        &-2(X_{111}X_{112}X_{221}X_{222}+X_{111}X_{121}X_{212}X_{222}
        + X_{111}X_{211}X_{122}X_{222}\\
        &+X_{112}X_{121}X_{212}X_{221}
        + X_{112}X_{211}X_{122}X_{221}+X_{121}X_{211}X_{122}X_{212}) \\
        &+ 4 (X_{111}X_{122}X_{212}X_{221} + X_{112}X_{121}X_{211}X_{222}).
\end{align*}
This function is the unique (up to scalar) fundamental invariant of $\mathrm{Inv}(2,2,2)$ and in fact generates this ring 
\cite[Theorem~13]{bbs12}. Originally it was also computed by Cayley \cite{cay2} and it is the simplest example of the {\it geometric hyperdeterminant} studied in \cite{gkz}.
\end{example}

\begin{example}[$2\times2\times3$ geometric hyperdeterminant] 
Let $d = 3$, $(n_{1},n_{2},n_{3}) = (2, 2, 3)$ and consider the following balanced table: 
\begin{align*}
T_{} = \begin{pmatrix}
	1 1 2 2 3 3\\
	1 1 2 2 3 3\\
	1 2 1 2 1 2
	\end{pmatrix}.
\end{align*}
Then $\Delta_T$ is the unique (up to scalar) fundamental invariant of the ring $\mathrm{Inv}(2, 2, 3)$, and $-\frac{1}{12} \Delta_T$ gives the geometric hyperdeterminant. See \cite{br12} for related computations. 
\end{example}

\begin{example}[$3\times 3 \times 3$ ring of invariants]
The ring $\mathrm{Inv}(3,3,3)$ is generated by three homogeneous polynomials 
of degrees $6,9$ and $12$, see \cite{brh13}. 
As generators, 
one can use the invariant polynomials indexed by the following balanced tables:
\begin{align*}
 T_{1} = \begin{pmatrix}
	1 1 1 2 2 2\\
	1 1 2 2 2 1\\
	1 2 2 2 1 1
	\end{pmatrix},\quad
 T_{2} = \begin{pmatrix}
	1 1 1 2 2 2 3 3 3\\
	1 1 2 2 2 3 3 3 1\\
	1 2 2 2 3 3 3 1 1
	\end{pmatrix},\quad
 T_{3} = \begin{pmatrix} 
	1 1 1 2 2 2 3 3 3 4 4 4\\
	1 1 1 2 2 2 3 3 3 4 4 4\\
	1 2 3 1 2 4 1 3 4 2 3 4
	\end{pmatrix}. %
\end{align*}
(To compare with generators in \cite{brh13}, $\Delta_{T_1}, \Delta_{T_2}$ differ up to scalar, and 
up to $\Delta_{T_3} + c \Delta_{T_1}^2$.)
Note that the geometric hyperdeterminant of format $3\times3\times 3$ has degree $36$ and can be written as $P(\Delta_{T_1}, \Delta_{T_2}, \Delta_{T_3})$ for some polynomial $P$, 
see \cite{bro14} for an explicit presentation. In expansion of $P$, each monomial of type $\Delta_{T_1}^{\alpha} \Delta_{T_2}^{\beta} \Delta_{T_3}^{\gamma}$ can be written as a $3\times36$ balanced table: horizontally concatenate $T_{1}$ $\alpha$ times, $T_{2}$ $\beta$ times and $T_{3}$ $\gamma$ times, so that each concatenated table is shifted in numbers (see \textsection\,\ref{horcon} for this operation). This way the geometric hyperdeterminant can be written as a linear combination of the $\Delta$ polynomials. 
\end{example}

\subsection{Basic properties} We now show some basic properties of the $\Delta$ polynomials.
\begin{proposition}[Relative GL-invariance]
    Let $X \in V$, $(A_{1},\ldots,A_{d}) \in \mathrm{GL}(n_{1}) \times \cdots \times \mathrm{GL}(n_{d})$. For $S \in P(n_1, M) \times \cdots \times P(n_d, M)$ we have
    \begin{align*}
        \Delta_S((A_{1},\ldots,A_{d}) \cdot X) = \det(A_{1})^{M/n_{1}}\cdots\det(A_{d})^{M/n_{d}} \,  \Delta_S(X).
    \end{align*}
\end{proposition}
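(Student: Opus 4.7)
The plan is to substitute the definition of the action into \eqref{hdetdef} and reduce the internal sum to a product of determinants. Writing $Y = (A_1,\ldots,A_d)\cdot X$, the hypermatrix entries are
\[
Y_{i_1,\ldots,i_d} \;=\; \sum_{j_1,\ldots,j_d}(A_1)_{i_1,j_1}\cdots (A_d)_{i_d,j_d}\, X_{j_1,\ldots,j_d}.
\]
Substituting this into each of the $M$ factors $Y_{\sigma_1(i),\ldots,\sigma_d(i)}$ produces independent summation variables for each $i$; collecting them into maps $\tau_k:[M]\to[n_k]$ I would rewrite
\[
\Delta_S(Y)\;=\;\sum_{\tau_1,\ldots,\tau_d}\Biggl[\prod_{k=1}^{d}\sum_{\sigma_k:[M]\to[n_k]}\sgn_{S^k}(\sigma_k)\prod_{i=1}^{M}(A_k)_{\sigma_k(i),\tau_k(i)}\Biggr]\prod_{i=1}^{M}X_{\tau_1(i),\ldots,\tau_d(i)}.
\]
So it suffices to show that the bracketed inner sum, for each fixed $k$ and $\tau_k$, equals $\det(A_k)^{M/n_k}\sgn_{S^k}(\tau_k)$.

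The crucial observation is that both $\sgn_{S^k}(\sigma_k)$ and the product $\prod_i (A_k)_{\sigma_k(i),\tau_k(i)}$ factor over the blocks of the set partition $S^k$. Writing $S^k$ as $A_1\cup\cdots\cup A_{M/n_k}$ with $A_b=\{a_{b,1}<\cdots<a_{b,n_k}\}$, the inner sum becomes
\[
\prod_{b=1}^{M/n_k}\;\sum_{s_1,\ldots,s_{n_k}\in[n_k]}\sgn(s_1,\ldots,s_{n_k})\prod_{j=1}^{n_k}(A_k)_{s_j,\tau_k(a_{b,j})}.
\]
For each block $b$, setting $c_j=\tau_k(a_{b,j})$, I recognize the inner sum as $\det\bigl([(A_k)_{\cdot,c_1}\,\cdots\,(A_k)_{\cdot,c_{n_k}}]\bigr)$, which is $0$ if the $c_j$ coincide (i.e., $\tau_k|_{A_b}$ is not a permutation) and equals $\sgn(c_1,\ldots,c_{n_k})\det(A_k)$ otherwise. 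By definition of the block-sign this is precisely $\sgn_{S^k_b}(\tau_k|_{A_b})\det(A_k)$, so the product over all blocks yields $\det(A_k)^{M/n_k}\sgn_{S^k}(\tau_k)$, as required.

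Plugging this back and pulling the determinant factors (which do not depend on $\tau_k$) out of the sum, the remaining expression over $\tau_1,\ldots,\tau_d$ is exactly $\Delta_S(X)$, giving the claimed identity. The only mildly technical step is the determinant identification at the block level; everything else is bookkeeping of indices. Note the identity automatically specializes to the $G$-invariance stated earlier, since $\det(A_k)=1$ when $A_k\in\mathrm{SL}(n_k)$.
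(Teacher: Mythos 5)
Your proof is correct and takes essentially the same approach as the paper: substitute the group action into the defining sum, exchange summations, and identify the inner signed block sums as determinants whose column permutations produce the block signs of the new maps. The only cosmetic difference is that the paper applies one factor $(A_1,I,\ldots,I)$ at a time before composing, whereas you handle all $d$ directions simultaneously; the underlying block-determinant computation is identical.
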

\begin{proof}
  Since $(A_{1},\ldots,A_{d}) \cdot X = ((A_{1},\ldots,I) \cdot \ldots \cdot (I,\ldots,A_{d}))\cdot X$ it is enough to check the identity for an action of one component, say for $(A_{1},I,\ldots,I)$. Denote $A = A_{1}$ and $Y = (A,I,\ldots, I) \cdot X$. Let $S = (S^1, \ldots, S^d)$. Then
  \begin{align*}
      \Delta_S(Y)  
      &= \sum_{\sigma_{1},\ldots,\sigma_{d}}
      	\sgn_{S^1}(\sigma_1) \cdots \sgn_{S^d}(\sigma_d)
		\prod_{i=1}^M \left( 
		\sum_{\ell_{i}=1}^{n_{1}} 
			A_{\sigma_1(i),\ell_{i}}
			X_{\ell_{i},\ldots,\sigma_{d}(i)} \right) 
	\\ &= \sum_{\ell:[M] \to [n_{1}]} 
		\sum_{\sigma_{1},\ldots,\sigma_{d}}
		\sgn_{S^1}(\sigma_1) \cdots \sgn_{S^d}(\sigma_d) 
		\prod_{i=1}^M 
			A_{\sigma_1(i),\ell({i})} X_{\ell({i}),\ldots,\sigma_{d}(i)} 
	\\ &= \sum_{\ell:[M] \to [n_{1}]} 
		\sum_{\sigma_{2},\ldots,\sigma_{d}}
		\sgn_{S^{2}}(\sigma_{2}) \cdots \sgn_{S^{d}}(\sigma_{d})
		\prod_{i=1}^M  X_{\ell(i),\ldots,\sigma_{d}(i)} 
		 \times \left(
			\sum_{\sigma_{1}} 
				\sgn_{S^{1}}(\sigma_{1}) A_{\sigma_1(i),\ell(i)}
		\right)
  \end{align*}
  where $\sigma_{i}: [M] \to [n_{i}]$ for $i \in [d]$. Note that $\sigma_{1}$ splits into $k_{1}=M/n_{1}$ subpermutations according to $S^{1}$ when $\sgn_{S^{1}}(\sigma_{1})\neq 0$; hence the last sum factors into $k_{1}$ determinants of matrices formed by the columns $\ell(1),\ldots,\ell(M)$ according to $S^{1}$. Since permutation of columns alters the sign of the determinant by 
  its sign and vanishes when two columns are equal, we have
  \begin{align*}
      \Delta_S(Y) = \det(A)^{k_{1}} \sum_{\ell, \sigma_{2},\ldots,\sigma_{d}} \sgn_{S^{1}}(\ell) \ldots \sgn_{S^{d}}(\sigma_{d}) \prod_{i=1}^{M} X_{\ell(i),\ldots,\sigma_{d}(i)} = \det(A)^{k_{1}} \Delta_{S}(X)
  \end{align*}
   and the proof follows. 
\end{proof}

A hypermatrix $X$ corresponding to a tensor in $V$ has $n_{\ell}$ {\it parallel slices} in the direction $\ell \in [d]$ given by fixing the $\ell$-th coordinate $i_{\ell} \in [n_{\ell}]$. 

\begin{corollary}
The polynomials $\Delta_{S}$ satisfy: 
\begin{enumerate}
  \item Exchanging any two parallel slices in direction $i$ changes $\Delta_{S}$ by $(-1)^{M/n_{i}}$.
  \item  $\Delta_{S}$ is a homogeneous polynomial in the entries of each slice. The degree of homogeneity is the same for parallel slices in direction $i$ and is equal to $M/n_{i}$. 
  \item  $\Delta_{S}$ does not change if we add to any slice a scalar multiple of a parallel slice.
  \item $\Delta_{S} = 0$ if there are two parallel slices proportional to each other. 
\end{enumerate}
\end{corollary}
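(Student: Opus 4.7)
The plan is to derive all four items as immediate specializations of the relative GL-invariance proposition just established, by choosing for each item a specific $A_i \in \mathrm{GL}(n_i)$ and taking the identity in the other components. For such a choice, the proposition collapses to
$$
\Delta_S\bigl((I,\ldots,A_i,\ldots,I)\cdot X\bigr) = \det(A_i)^{M/n_i}\,\Delta_S(X),
$$
so each item reduces to a one-line computation of a determinant of a very simple matrix.

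First, for (1) I would take $A_i$ to be the permutation matrix of the transposition swapping the two indices of the slices being exchanged; since $\det A_i = -1$, the proposition yields the factor $(-1)^{M/n_i}$. For (2) I would use the diagonal matrix $A_i = \mathrm{diag}(1,\ldots,c,\ldots,1)$ with $c$ in the position of the slice being scaled; the resulting factor $c^{M/n_i}$ simultaneously shows that $\Delta_S$ is a homogeneous polynomial in the entries of that slice and that its degree, $M/n_i$, depends only on the direction $i$ and not on which particular slice within that direction is chosen.

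For (3) I would use the elementary transvection $A_i = I + c\,E_{a,b}$, whose determinant is $1$; the proposition then says $\Delta_S$ is unchanged after adding $c$ times the $b$-th slice to the $a$-th slice in direction $i$. Finally, (4) drops out by combining (3) and (2): if two parallel slices in direction $i$ are proportional, apply (3) to subtract the appropriate scalar multiple and produce an equivalent input having a zero slice in direction $i$; since (2) says $\Delta_S$ is homogeneous of degree $M/n_i \ge 1$ in the entries of that slice, it must vanish.

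I do not foresee any real obstacle: all four items are direct corollaries of the relative GL-invariance identity together with elementary linear algebra. The only minor bookkeeping point is that in (4) one needs $M/n_i \ge 1$, which holds whenever $\mathrm{Inv}(n_1,\ldots,n_d)_M$ is nontrivial; otherwise the statement is vacuous by the divisibility part of Proposition~\ref{prop:span}.
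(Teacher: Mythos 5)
Your proposal is correct and is exactly the argument the paper intends (the paper states the corollary without a separate proof, presenting it as an immediate consequence of the relative $\mathrm{GL}$-invariance proposition). Choosing the transposition matrix, the diagonal scaling matrix, and the unipotent transvection in the $i$-th factor, and then combining homogeneity with slice elimination for the last item, is precisely the standard one-line derivation for each part.
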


\begin{remark}
For even $d$ and minimal degree $M = n_1 = \cdots = n_d$, such properties in fact characterize Cayley's first hyperdeterminant, see \cite[Prop.~3.2]{ay}.
\end{remark}

\subsection{Column swap}
Let $T$ be a balanced $d \times M$ table. For $i,j\in[M]$ denote by $(i,j) T$ the table resulting in the swap of the $i$-th and $j$-th columns of $T$.

\begin{lemma}\label{colswap}
	Let $i < j \in [M]$. Then 
	$$
		\Delta_{T} = (-1)^{\ell} \Delta_{(i,j)  T}
	$$
	where
	\begin{align}\label{colsign}
	\ell = \sum_{m=1}^{d}
	\#\{c\in[i,j]: T_{m, i} = T_{m, c} \} + \#\{ c\in[i,j]: T_{m, j} = T_{m, c} \} - \delta_{T_{m, i} T_{m,j}}
	\end{align}
\end{lemma}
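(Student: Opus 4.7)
The plan is to substitute $\sigma'_m := \sigma_m \circ \tau$, where $\tau = (i,j) \in S_M$, in the defining sum for $\Delta_{T'} = \Delta_{(i,j)T}$, and show that the row-by-row sign product transforms by a global factor that is independent of $\sigma'$.

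First I would observe that swapping columns $i$ and $j$ of $T$ replaces each row-$m$ set partition $S^m$ (with blocks $A_\ell = \{c : T_{m,c} = \ell\}$) by the set partition with blocks $A'_\ell = \tau(A_\ell)$. After the substitution, the monomial $\prod_c X_{\sigma_1(c),\ldots,\sigma_d(c)}$ is unchanged (relabeling $c \mapsto \tau(c)$), and each signed factor becomes $\sgn_{T'_m}(\sigma'_m \circ \tau)$. So it suffices to prove, for each row $m$, the pointwise identity
$$
\sgn_{T'_m}(\sigma \circ \tau) \;=\; \epsilon_m \cdot \sgn_{T_m}(\sigma),
$$
with $\epsilon_m = \pm 1$ depending only on $T$, $m$, $i$, $j$, and then to verify that $\prod_m \epsilon_m = (-1)^\ell$ for $\ell$ as in \eqref{colsign}.

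To establish such an identity, write the elements of $A_\ell$ and $A'_\ell$ in increasing order as $a_{\ell,1} < \cdots < a_{\ell,n_m}$ and $a'_{\ell,1} < \cdots < a'_{\ell,n_m}$, and define $\pi_\ell \in S_{n_m}$ by $\tau(a'_{\ell,k}) = a_{\ell,\pi_\ell(k)}$. Substituting into the product defining $\sgn_{T'_m}$ and pulling out the permutation yields $\sgn_{T'_m}(\sigma \circ \tau) = \bigl(\prod_\ell \sgn(\pi_\ell)\bigr) \sgn_{T_m}(\sigma)$. Hence $\epsilon_m = \prod_\ell \sgn(\pi_\ell)$, which only needs to be computed for the at most two blocks actually moved by $\tau$.

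The main (and only delicate) step is computing $\sgn(\pi_\ell)$ in the two cases:

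\textbf{Case (a):} $T_{m,i}=T_{m,j}=\ell_0$. Then $A'_{\ell_0}=A_{\ell_0}$, but within this common block $\pi_{\ell_0}$ is the transposition that swaps the ranks of $i$ and $j$, so $\sgn(\pi_{\ell_0}) = -1$ and all other $\pi_\ell$ are trivial; total contribution $-1$.

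\textbf{Case (b):} $T_{m,i} = \alpha \neq \beta = T_{m,j}$. Removing $i = a_{\alpha,p}$ from $A_\alpha$ and inserting $j$ shifts the elements of $A_\alpha \cap [i,j]$ left by one position (since $j$ is larger than all of them but smaller than $a_{\alpha,p+r+1}$), making $\pi_\alpha$ a single cycle of length $r+1 = |A_\alpha \cap [i,j]|$, hence $\sgn(\pi_\alpha) = (-1)^r$. Symmetrically, $\sgn(\pi_\beta) = (-1)^s$ with $s+1 = |A_\beta \cap [i,j]|$; total contribution $(-1)^{r+s}$.

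Finally, I would check these match formula \eqref{colsign}. In Case (a) the two counts in the formula both equal $|A_{\ell_0}\cap[i,j]| = u$ and $\delta_{T_{m,i},T_{m,j}}=1$, giving exponent $2u-1$ which is odd, matching $-1$. In Case (b) the two counts equal $r+1$ and $s+1$ with $\delta_{T_{m,i},T_{m,j}}=0$, giving exponent $r+s+2\equiv r+s\pmod{2}$, matching $(-1)^{r+s}$. Summing the contributions over $m$ yields exactly the exponent $\ell$ in \eqref{colsign}, completing the proof. The main obstacle is the careful tracking in Case (b) of how sorted order is perturbed when one element is replaced by another farther to the right; once the resulting cyclic structure of $\pi_\alpha$ and $\pi_\beta$ is identified, the sign identity is immediate.
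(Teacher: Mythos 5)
Your proof is correct and follows essentially the same strategy as the paper's: both substitute $\sigma'_m = \sigma_m\circ(i,j)$ and compare the row-by-row signs before and after the column swap. The paper argues informally in terms of counting how the number of inversions of each sub-permutation changes, whereas you make the same comparison explicit via the sorted-order relabeling permutations $\pi_\ell$ and the factorization $\sgn_{T'_m}(\sigma\circ\tau)=\bigl(\prod_\ell \sgn(\pi_\ell)\bigr)\sgn_{T_m}(\sigma)$; identifying $\pi_\alpha,\pi_\beta$ as cycles of lengths $|A_\alpha\cap[i,j]|$ and $|A_\beta\cap[i,j]|$ is a slightly cleaner way to get the same parity count, and it also handles the case where a sub-permutation vanishes without any special comment.
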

\begin{proof}
	Each monomial in the expansion \eqref{hdetdef} of $\Delta_{T}$ can be bijectively mapped to the same monomial of $\Delta_{(i,j)T}$. 
	For a tuple of maps $\sigma = (\sigma_1,\ldots, \sigma_d)$ as in the expansion \eqref{hdetdef} we associate another tuple of maps $\tau = (\tau_1,\ldots, \tau_d)$ given by $\tau_{m}  = \sigma_{m} \circ (i, j)$ (here $\circ$ is the composition) for each $m \in[d]$. 
	Clearly, $\{\sigma(1),\ldots,\sigma(M)\} = \{\tau(1),\ldots,\tau(M)\}$ as sets of vectors. Denote 
	\begin{align} \label{lm}
	\ell_{m} = \#\{c \in(i,j): T_{m, i} = T_{m, c} \} + \#\{c \in(i,j): T_{m, j} = T_{m, c}\} - \delta_{T_{m, i} T_{m,j}}
	\end{align}
	and let us show that $\sgn_{T_{m}}(\sigma_{m})= (-1)^{\ell_{m}}\, \sgn_{(i,j)\circ T_{m}}(\tau_{m})$ where $T_{m}$ is the $m$-th row of $T$. Let $x = T_{m,i}$ and $y = T_{m,j}$. Except for $x$-th and $y$-th sub-permutations, the relative order of other sub-permutations does not change, hence their contributions remains the same. If $x = y$ then the number of inversions of $x$-th sub-permutations in $\sigma_{m}$ and $\tau_{m}$ differ by one, and $\ell_{m}$ is odd, as needed. If $x\neq y$ then the number of inversions in $x$-th (or $y$-th) sub-permutation changes by the number of elements of this permutation that lie in the range $[i,j]$ of $T_{m}$,  which is exactly the terms of \eqref{lm}, as needed. Finally, we have 
	$$\sgn_{T}(\sigma) = \prod_{m=1}^{d}\sgn_{T_{m}}(\sigma_{m}) = \prod_{m=1}^{d}(-1)^{\ell_{m}}\sgn_{(i,j)\circ T_{m}}(\tau_{m})= (-1)^{\ell} \sgn_{(i,j) T}(\tau)$$
	and the proof follows.	
\end{proof}

\begin{corollary}\label{identicalcol}
	Let $d$ be {odd}. If $T$ has two equal columns then $\Delta_{T} = 0$.
\end{corollary}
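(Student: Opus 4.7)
The plan is to apply Lemma~\ref{colswap} directly to the pair of identical columns and show that the resulting sign $(-1)^{\ell}$ is $-1$ when $d$ is odd, which forces $\Delta_T = -\Delta_T = 0$.

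First I would fix columns $i < j$ of $T$ that are equal, so that $T_{m,i} = T_{m,j}$ for every row $m \in [d]$; denote this common value by $x_m$. Since the two columns are identical, the swapped table $(i,j)T$ equals $T$ as a table, hence $\Delta_{(i,j)T} = \Delta_T$ tautologically. Lemma~\ref{colswap} therefore gives
\begin{equation*}
\Delta_T = (-1)^{\ell}\,\Delta_T,
\end{equation*}
so it suffices to check that $\ell$ defined in \eqref{colsign} is odd.

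Next I would compute $\ell$ term by term. For each $m$, because $T_{m,i} = T_{m,j} = x_m$, the two cardinalities
\begin{equation*}
a_m := \#\{c \in [i,j] : T_{m,c} = x_m\}
\end{equation*}
appearing in \eqref{colsign} are equal (both count the occurrences of $x_m$ among positions $i, i+1, \ldots, j$ in row $m$), and $\delta_{T_{m,i}T_{m,j}} = 1$. Hence the $m$-th summand equals $2a_m - 1$, which is odd. Summing over $m \in [d]$ gives $\ell \equiv d \pmod{2}$.

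The only thing left is to invoke the hypothesis that $d$ is odd, which makes $\ell$ odd and therefore $(-1)^{\ell} = -1$. There is no genuine obstacle here; the result is an immediate parity consequence of Lemma~\ref{colswap}, and the role of the odd $d$ assumption is simply to guarantee that the sign picked up by the trivial self-swap is $-1$ rather than $+1$.
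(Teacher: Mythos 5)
Your proof is correct and follows essentially the same argument as the paper: apply Lemma~\ref{colswap} to the pair of equal columns, observe that $(i,j)T = T$, and check that each of the $d$ summands in \eqref{colsign} is odd (equal to $2a_m - 1$), so $\ell \equiv d \pmod 2$ is odd and $\Delta_T = -\Delta_T = 0$.
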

\begin{proof}
 	Suppose $i \neq j$ are indices of two equal columns in $T$. Then for each row $m\in[d]$ we have $T_{m, i} = T_{m, j}$. 
	Then we have $\Delta_{T} = (-1)^{\ell} \Delta_{(i,j)_{}T} = (-1)^{\ell} \Delta_{T}$ and
	$$
		\ell 
		= \sum_{m = 1}^d (2 \#\{c\in[i,j]: T_{m, i} = T_{m, c} \}  - 1)
	$$
	which is an odd number. 
	Hence $\Delta_{T}=-\Delta_{T}$ and the claim follows.
\end{proof}

\begin{remark}
These properties show that we may disregard the order of columns of $T$ and view it as a set of its columns (as vectors) by sorting them in lexicographical order; this point of view will be especially useful in \textsection\,\ref{sec:latin}.
\end{remark}

\begin{remark}
Let us note that having distinct columns of $T$ is a necessary condition for $\Delta_T$ to be nonzero but not sufficient. For example, we have 
$$
T = \begin{pmatrix}
1 1 2 2 3 3\\ 
1 1 2 2 3 3\\ 
1 2 2 3 1 3
\end{pmatrix} \qquad \Delta_T = 0.
$$
Note that in this case $d = 3, n = 2$ and there are no invariants of degree $6$. 
\end{remark}

\begin{remark}
	Swapping two rows $(i,j)_r$ of $T$ affects the polynomial $\Delta_{T}$ in the following way:
	$$
		\Delta_{(i,j)_{r}T}(X) = \Delta_{T}(X^{t(i,j)})
	$$
	where $X^{t(i,j)}$ is the result of transposing $X$ along directions $i$ and $j$. In the case of Cayley's first hyperdeterminant, we have $\Delta(X) = \Delta(X^{t(i,j)})$ (same as for ordinary determinant), but for general functions $\Delta$ this property does not hold. For example, sometimes a swap of two rows in a balanced table can be replaced with column swaps for odd $\ell$ and the result will differ by sign.
\end{remark}

\begin{remark}
Let us also note that the invariant polynomials $\Delta$ from the same space can be equivalent up to a scale $\ne \pm 1$, which is different to previous instances of equivalences. For example, let $d =  n = k = 3$ and consider the following balanced tables: 
\begin{align*}
T_1 = 
\begin{pmatrix}
1 1 1 2 2 2 3 3 3  \\
1 1 2 1 2 3 2 3 3 \\
1 2 1 \textcolor{red}{2} 2 \textcolor{red}{3 3} 1 3 
\end{pmatrix}, 
\quad 
T_2 = 
\begin{pmatrix}
1 1 1 2 2 2 3 3 3  \\
1 1 2 1 2 3 2 3 3 \\
1 2 1 \textcolor{red}{3} 2 \textcolor{red}{2 3} 1 3 
\end{pmatrix},
\quad
T_3 = 
\begin{pmatrix}
1 1 1 2 2 2 3 3 3  \\
1 1 2 1 2 3 2 3 3 \\
1 2 1 \textcolor{red}{3} 2 \textcolor{red}{3 2} 1 3 
\end{pmatrix},
\end{align*}
where the red entries indicate the difference between the tables.
Then we have 
$$
\Delta_{T_1} = -4 \Delta_{T_2} = 2 \Delta_{T_3} \ne 0.
$$
\end{remark}

\subsection{Horizontal concatenation}\label{horcon}
Let $T_{1}$ and $T_{2}$ be $d \times M_{1}$ and $d \times M_{2}$ balanced tables, respectively. Denote by $(T_{1}T_{2})$ the $d \times (M_1 + M_2)$ balanced obtained by the {\it horizontal concatenation} of $T_1$ and $T_2$ so that each entry of $T_{2}$ in row $i$ is increased by $M_{1}/n_{i}$ (to make the entries in $T_{1}$ and $T_{2}$ different). For example,
\begin{align*}
 T_{1} = \begin{pmatrix}
	1 1 1 2 2 2\\
	1 1 2 1 2 2\\
	1 2 2 1 1 2
	\end{pmatrix},\quad
 T_{2} = \begin{pmatrix}
	1 1 1 2 2 2 3 3 3\\
	1 1 2 1 2 3 2 3 3\\
	1 2 1 2 3 1 3 2 3
	\end{pmatrix},\quad
 (T_{1} T_{2}) = \begin{pmatrix}
	1 1 1 2 2 2 {\bf 3 3 3 4 4 4 5 5 5}\\
	1 1 2 1 2 2 {\bf 3 3 4 3 4 5 3 5 5} \\
	1 2 2 1 1 2 {\bf 3 4 3 4 5 3 5 4 5}
	\end{pmatrix}
\end{align*}
It is easy to see that the product of elements of the ring $\mathrm{Inv}(n_{1},\ldots,n_{d})$ can be written via horizontal concatenations.
\begin{proposition}\label{hconcat}
	Let $\Delta_{T_{1}}, \Delta_{T_{2}} \in \mathrm{Inv}(n_{1},\ldots,n_{d})$. Then
	$$
		\Delta_{T_{1}} \cdot \Delta_{T_{2}} = \Delta_{(T_{1}T_{2})} \in \mathrm{Inv}(n_{1},\ldots,n_{d}).
	$$
\end{proposition}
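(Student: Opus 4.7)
The plan is to compare the defining sum \eqref{hdetdef} for $\Delta_{(T_1T_2)}$ with the product of the sums for $\Delta_{T_1}$ and $\Delta_{T_2}$, and show that the former factors into the latter. The key structural observation is that concatenation was designed precisely so that the set partitions coming from $T_1$ and from $T_2$ live on disjoint column‑index sets: after increasing every entry of row $i$ of $T_2$ by $M_1/n_i$, each block of $(T_1T_2)^i$ is either contained in $[M_1]$ (in which case it is a block of $T_1^i$) or contained in $\{M_1+1,\ldots,M_1+M_2\}$ (in which case it is a shifted block of $T_2^i$).

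Next I would parameterize an arbitrary map $\sigma : [M_1+M_2] \to [n_i]$ appearing in the expansion of $\Delta_{(T_1T_2)}$ by the pair $(\sigma_{1},\sigma_{2})$ with $\sigma_1 = \sigma|_{[M_1]}$ and $\sigma_2(j) = \sigma(M_1 + j)$ for $j \in [M_2]$. Because $\sgn_{S}(\sigma)$ is by definition a product of signs of sub-permutations, one per block of $S$, and the blocks partition cleanly into those on $[M_1]$ and those on $\{M_1+1,\ldots,M_1+M_2\}$, one obtains the factorization
\[
\sgn_{(T_1T_2)^i}(\sigma) \;=\; \sgn_{T_1^i}(\sigma_1)\,\sgn_{T_2^i}(\sigma_2)
\]
for every $i\in[d]$. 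Taking the product over $i$ factors the total sign.

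Finally, the monomial $\prod_{i=1}^{M_1+M_2} X_{\sigma_1(i),\ldots,\sigma_d(i)}$ (where here $\sigma_i$ denotes the $i$-th coordinate map, not the decomposition above—I would be careful about notation in the writeup) splits as a product over $i \in [M_1]$ times a product over $i\in\{M_1+1,\ldots,M_1+M_2\}$, each factor depending only on $(\sigma^{(1)}_1,\ldots,\sigma^{(1)}_d)$ or $(\sigma^{(2)}_1,\ldots,\sigma^{(2)}_d)$, respectively. Thus the $d$-fold sum over maps $[M_1+M_2]\to[n_i]$ factors as a product of a sum over maps $[M_1]\to[n_i]$ and a sum over maps $[M_2]\to[n_i]$, giving $\Delta_{(T_1T_2)}(X) = \Delta_{T_1}(X)\cdot\Delta_{T_2}(X)$.

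There is no real obstacle here; the mild bookkeeping hurdle is just keeping the relabeling conventions straight (the shift by $M_1/n_i$ in each row of $T_2$ is what guarantees disjoint block supports and hence the factorization of the sign). Once that is spelled out the identity is immediate from \eqref{hdetdef}.
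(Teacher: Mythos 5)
Your argument is correct and is the natural one: the shift convention ensures the blocks of each set partition $(T_1T_2)^i$ are supported entirely on $[M_1]$ or on $\{M_1+1,\ldots,M_1+M_2\}$, so the sign, the monomial, and hence the full sum in \eqref{hdetdef} all factor. The paper actually omits a proof of Proposition~\ref{hconcat} (it is stated as ``easy to see''), so your write-up supplies precisely the bookkeeping the authors leave implicit; it matches the intended argument.
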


\subsection{Vertical concatenation}
Let $T_1$ and $T_2$ be $\ell \times M$ and $(d - \ell) \times M$ balanced tables respectively so that $\Delta_{T_1} \in \mathrm{Inv}(n_1,\ldots, n_{\ell})$ and $\Delta_{T_2} \in \mathrm{Inv}(n_{\ell+1}, \ldots, n_{d})$. 
Denote by $T = \left(\substack{T_{1}\\T_{2}}\right)$ the $d \times M$ balanced table obtained by the {\it vertical concatenation} of $T_1$ and $T_2$. Note that $\Delta_T \in \mathrm{Inv}(n_1,\ldots, n_d)$.

\begin{proposition}\label{productlemma}
Let $Y \in V_1 = \mathbb{C}^{n_1} \otimes \cdots \otimes \mathbb{C}^{n_{\ell}}$, $Z \in V_2 = \mathbb{C}^{n_{\ell + 1}} \otimes \cdots \otimes \mathbb{C}^{n_{d}}$. We have
$$
	\Delta_{T}(Y \otimes Z) = \Delta_{T_{1}}(Y) \cdot \Delta_{T_{2}}(Z),
$$
where 
$Y \otimes Z \in V$ is the outer tensor product. 
\end{proposition}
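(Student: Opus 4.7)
The plan is to unfold both sides using the defining expansion \eqref{hdetdef} and observe that, when evaluated at $Y \otimes Z$, the summand factors cleanly into a part indexed by the rows of $T_1$ and a part indexed by the rows of $T_2$, after which the sum over $(\sigma_1, \ldots, \sigma_d)$ separates into a product of two independent sums.

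First, I recall that the outer tensor product has entries
$$
(Y \otimes Z)_{i_1, \ldots, i_d} \;=\; Y_{i_1, \ldots, i_{\ell}} \cdot Z_{i_{\ell+1}, \ldots, i_d},
$$
so for any maps $\sigma_m : [M] \to [n_m]$,
$$
\prod_{i=1}^M (Y \otimes Z)_{\sigma_1(i), \ldots, \sigma_d(i)}
= \prod_{i=1}^M Y_{\sigma_1(i), \ldots, \sigma_{\ell}(i)} \cdot \prod_{i=1}^M Z_{\sigma_{\ell+1}(i), \ldots, \sigma_d(i)}.
$$
Next, since the $m$-th row of $T$ coincides with the $m$-th row of $T_1$ for $m \le \ell$ and with the $(m-\ell)$-th row of $T_2$ for $m > \ell$, the sign factor decomposes as
$$
\sgn_T(\sigma_1, \ldots, \sigma_d)
= \sgn_{T_1}(\sigma_1, \ldots, \sigma_{\ell}) \cdot \sgn_{T_2}(\sigma_{\ell+1}, \ldots, \sigma_d).
$$

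Finally, plugging these two factorizations into \eqref{hdetdef} and using that the summation variables $\sigma_1, \ldots, \sigma_d$ are independent (and that $T_1$ and $T_2$ share the same number of columns $M$, so the index set $[M]$ is the same for both groups of maps), the double sum separates:
$$
\Delta_T(Y \otimes Z)
= \Bigl(\sum_{\sigma_1, \ldots, \sigma_{\ell}} \sgn_{T_1}(\sigma_1, \ldots, \sigma_{\ell}) \prod_{i=1}^M Y_{\sigma_1(i), \ldots, \sigma_{\ell}(i)}\Bigr)
\cdot \Bigl(\sum_{\sigma_{\ell+1}, \ldots, \sigma_d} \sgn_{T_2}(\sigma_{\ell+1}, \ldots, \sigma_d) \prod_{i=1}^M Z_{\sigma_{\ell+1}(i), \ldots, \sigma_d(i)}\Bigr),
$$
and the two factors are exactly $\Delta_{T_1}(Y)$ and $\Delta_{T_2}(Z)$.

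There is no real obstacle here: the argument is essentially pure bookkeeping, and the only point requiring a brief check is the compatibility of column indices (both $T_1$ and $T_2$ have the common column length $M$, which is forced by the definition of vertical concatenation) so that the product of entries and the signs both respect the same decomposition of $\sigma$ into $(\sigma_1, \ldots, \sigma_{\ell})$ and $(\sigma_{\ell+1}, \ldots, \sigma_d)$.
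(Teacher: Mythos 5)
Your proposal is correct and follows essentially the same route as the paper: expand both sides via the defining sum \eqref{hdetdef}, factor the entries of the outer tensor product and the sign term across the $T_1$/$T_2$ split of rows, and separate the sum over $(\sigma_1,\ldots,\sigma_d)$ into a product of two independent sums.
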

 \begin{proof} 
Let $X = Y \otimes Z$. We have
	\begin{align*}
		\Delta_{T}(X) &= \sum_{\sigma_{1},\ldots,\sigma_{d}} \sgn_{T}(\sigma_1, \ldots, \sigma_d) \prod_{i=1}^{M} X_{\sigma_{1}(i),\ldots,\sigma_{d}(i)}\\
		&= \sum_{\sigma_{1},\ldots,\sigma_{d}} \sgn_{T_{1}}(\sigma_{A})\, \sgn_{T_{2}}(\sigma_{B}) \prod_{i=1}^{M} Y_{\sigma_{A}(i)} Z_{\sigma_{B}(i)}
		= \Delta_{T_{1}}(Y) \cdot \Delta_{T_{2}}(Z),
	\end{align*}
	where for a subset $A=\{a_{1}<\ldots<a_{k}\}\subseteq[d]$ we denote $\sigma_{A} = (\sigma_{a_{1}},\ldots,\sigma_{a_{k}})$ and $\sigma_{A}(i) = (\sigma_{a_{1}}(i),\ldots,\sigma_{a_{k}}(i))$. 
\end{proof}

\begin{corollary}\label{nullity}
	Assume $\Delta_{T_1}$ is identically $0$. Then the restriction of $\Delta_{T}$ on $V_1 \otimes V_2$ is identically $0$. 
\end{corollary}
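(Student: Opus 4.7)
The corollary is an essentially immediate consequence of Proposition~\ref{productlemma}. The plan is first to unpack the meaning of ``$V_{1}\otimes V_{2}$'' in this statement: it is the subvariety of $V$ consisting of partially decomposable tensors, i.e.\ the image of the outer product map $V_{1}\times V_{2}\to V$, $(Y,Z)\mapsto Y\otimes Z$. Every element of this subvariety has (at least one) representation $Y\otimes Z$ with $Y\in V_{1}$ and $Z\in V_{2}$; this is the only form of tensor on which the product formula applies.

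Next, I would apply Proposition~\ref{productlemma} directly. For any $Y\in V_{1}$ and $Z\in V_{2}$ it gives
\[
\Delta_{T}(Y\otimes Z)=\Delta_{T_{1}}(Y)\cdot\Delta_{T_{2}}(Z).
\]
The hypothesis is that $\Delta_{T_{1}}$ is identically zero as a polynomial on $V_{1}$, so $\Delta_{T_{1}}(Y)=0$ for every $Y$. Hence the right-hand side vanishes regardless of $Z$, and therefore $\Delta_{T}$ vanishes on every element of $V_{1}\otimes V_{2}$, giving the claim.

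There is no real obstacle here; the proof is a one-line invocation of the factorization. The only conceptual point worth emphasizing is that this vanishing does not propagate to all of $V$, because a generic tensor in $\mathbb{C}^{n_{1}}\otimes\cdots\otimes\mathbb{C}^{n_{d}}$ is not of the partially decomposable form $Y\otimes Z$ for the chosen splitting $[d]=[1,\ell]\sqcup[\ell+1,d]$. Thus the statement is genuinely a \emph{restriction} result, well-suited to arguments (in subsequent sections) that rule out certain $\Delta_{T}$ from producing new invariants once a smaller-dimensional building block $\Delta_{T_{1}}$ is known to vanish.
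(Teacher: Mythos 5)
Your proof is correct and is essentially the paper's argument: the corollary is stated without a separate proof because, as you observe, it follows in one line from the factorization in Proposition~\ref{productlemma}. Your reading of ``$V_{1}\otimes V_{2}$'' as the locus of outer products $Y\otimes Z$ with $Y\in V_{1}$, $Z\in V_{2}$ (rather than the full tensor-product space, which as a vector space coincides with $V$) is the intended one, which is why the statement is phrased as a restriction.
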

\begin{corollary}
	If $\Delta_{T_{1}}$, $\Delta_{T_{2}}$ are nonzero polynomials, then $\Delta_{\left(\substack{T_{1}\\T_{2}}\right)}$ is also a non-zero polynomial. 
\end{corollary}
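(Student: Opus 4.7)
The plan is to use the preceding Proposition~\ref{productlemma} (the identity $\Delta_T(Y\otimes Z)=\Delta_{T_1}(Y)\cdot\Delta_{T_2}(Z)$) directly and evaluate on a suitably chosen decomposable tensor. Since $\Delta_{T_1}$ is a nonzero polynomial on $V_1=\mathbb{C}^{n_1}\otimes\cdots\otimes\mathbb{C}^{n_\ell}$, by definition there exists some hypermatrix $Y_0\in V_1$ with $\Delta_{T_1}(Y_0)\ne 0$. Likewise, $\Delta_{T_2}\not\equiv 0$ on $V_2=\mathbb{C}^{n_{\ell+1}}\otimes\cdots\otimes\mathbb{C}^{n_d}$ yields some $Z_0\in V_2$ with $\Delta_{T_2}(Z_0)\ne 0$.

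Form the outer product $X_0=Y_0\otimes Z_0\in V$. By Proposition~\ref{productlemma},
\[
\Delta_T(X_0)=\Delta_{T_1}(Y_0)\cdot\Delta_{T_2}(Z_0)\ne 0,
\]
so $\Delta_T$ does not vanish identically on $V$, hence it is a nonzero polynomial. There is essentially no obstacle: the whole statement is an immediate contrapositive of Corollary~\ref{nullity} applied symmetrically in the two factors, once one observes that the image of the outer product map $V_1\times V_2\to V$ is not contained in the zero set of $\Delta_T$ whenever the two factor polynomials are individually nonzero. The only minor point worth mentioning in the writeup is that $\Delta_{T_1}\in\mathrm{Inv}(n_1,\ldots,n_\ell)$ and $\Delta_{T_2}\in\mathrm{Inv}(n_{\ell+1},\ldots,n_d)$ are polynomials in disjoint sets of variables (the entries of $Y$ and $Z$ respectively), so their product $\Delta_{T_1}(Y)\cdot\Delta_{T_2}(Z)$ is a nonzero polynomial in the entries of $Y\otimes Z$ as soon as each factor is nonzero, which is exactly what the above choice of $(Y_0,Z_0)$ exhibits.
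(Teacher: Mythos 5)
Your proof is correct and is exactly the argument the paper intends (the corollary is stated without proof as an immediate consequence of Proposition~\ref{productlemma}): pick $Y_0$, $Z_0$ witnessing nonvanishing of $\Delta_{T_1}$, $\Delta_{T_2}$ and evaluate $\Delta_T$ on the outer product $Y_0\otimes Z_0$. Nothing is missing.
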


\section{Degree bounds}\label{sec:degbound}

More generally, let us denote the smallest degrees of invariants as follows: 
$$
\delta(n_1,\ldots, n_d) := \min \left\{m : \dim \mathrm{Inv}(n_1,\ldots, n_d)_m > 0\right\}
$$
so that $\delta_d(n) = \delta(n, \ldots, n)$ ($d$ times).
Note that $n_i$ divides $\delta(n_1,\ldots, n_d)$ for all $i \in [d]$ and hence   
$$\delta(n_1,\ldots, n_d) \ge \mathrm{lcm}(n_{1},\ldots,n_{d}).$$ 


Below we obtain another lower bound, which sometimes can be larger than the last one. We also show a kind of recursive upper bound.

\begin{theorem} We have the following bounds:

	(i) Let $d \ge 3$ be {odd}. Then
	$$\delta(n_{1},\ldots,n_{d}) \ge \lceil {(n_{1}\ldots n_{d})^{1/(d-1)}} \rceil.$$ 
	
	(ii) Let $\ell \in [2, d - 2]$ and $d > 3$. Then 
	$$\delta_{}(n_1, \ldots, n_d) \le \mathrm{lcm}(\delta_{}(n_1,\ldots, n_\ell), \delta_{}(n_{\ell + 1}, \ldots, n_d)).$$
	(Assuming all $\delta(\cdots) < \infty$.)
\end{theorem}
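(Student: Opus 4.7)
For part (i), the plan is to apply Corollary~\ref{identicalcol} together with Proposition~\ref{prop:span}. By Proposition~\ref{prop:span}, if $\mathrm{Inv}(n_1,\ldots,n_d)_M$ is nonzero then some $\Delta_T$ for a $d \times M$ balanced table $T$ is a nonzero polynomial. For odd $d$, Corollary~\ref{identicalcol} forces every column of such a $T$ to be distinct. The columns of $T$ live in the product set $[M/n_1] \times \cdots \times [M/n_d]$, which has cardinality $M^d/(n_1 \cdots n_d)$. Requiring $M$ distinct columns therefore forces
\[
M \le \frac{M^d}{n_1 \cdots n_d}, \qquad \text{i.e.}\qquad M^{d-1} \ge n_1 \cdots n_d.
\]
Taking $(d{-}1)$-th roots and then ceilings (since $M \in \mathbb{N}$) yields the bound $M \ge \lceil (n_1 \cdots n_d)^{1/(d-1)} \rceil$, which proves (i).

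For part (ii), the plan is to combine the horizontal and vertical concatenation operations from \textsection\,\ref{horcon} and Proposition~\ref{productlemma}. Set $\delta_1 := \delta(n_1,\ldots,n_\ell)$ and $\delta_2 := \delta(n_{\ell+1},\ldots,n_d)$, and let $L = \mathrm{lcm}(\delta_1, \delta_2)$. Pick nonzero invariants $\Delta_{T_1} \in \mathrm{Inv}(n_1,\ldots,n_\ell)_{\delta_1}$ and $\Delta_{T_2} \in \mathrm{Inv}(n_{\ell+1},\ldots,n_d)_{\delta_2}$, which exist by assumption. By Proposition~\ref{hconcat}, the horizontal concatenation of $L/\delta_1$ shifted copies of $T_1$ is a balanced table $T_1'$ with
\[
\Delta_{T_1'} = \Delta_{T_1}^{L/\delta_1} \in \mathrm{Inv}(n_1,\ldots,n_\ell)_L,
\]
and this is a nonzero polynomial. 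Similarly build $T_2'$ of width $L$ with $\Delta_{T_2'} = \Delta_{T_2}^{L/\delta_2}$ nonzero.

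Now form the vertical concatenation $T = \left(\substack{T_1'\\T_2'}\right)$, a $d \times L$ balanced table. By Proposition~\ref{productlemma}, for $Y \in V_1, Z \in V_2$ we have $\Delta_T(Y \otimes Z) = \Delta_{T_1'}(Y) \cdot \Delta_{T_2'}(Z)$, which is not identically zero since both factors are nonzero polynomials. Hence $\Delta_T \in \mathrm{Inv}(n_1,\ldots,n_d)_L$ is a nonzero invariant, giving $\delta(n_1,\ldots,n_d) \le L$, as claimed.

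I expect no serious obstacle: part (i) is essentially a pigeonhole argument enabled by the odd-$d$ vanishing on repeated columns, and part (ii) is a direct bookkeeping exercise gluing the two basic concatenation operations. The only mild subtlety is checking that horizontal concatenation preserves balancedness in the shifted format and that the row-index shifts do not affect $\Delta_{T_1'}$ being the actual product $\Delta_{T_1}^{L/\delta_1}$, both of which are immediate from the definition in \textsection\,\ref{horcon}.
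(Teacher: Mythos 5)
Your proposal is correct and follows essentially the same route as the paper: part (i) is the pigeonhole bound that distinct columns of a balanced table live in $[M/n_1]\times\cdots\times[M/n_d]$ together with Corollary~\ref{identicalcol}, and part (ii) is exactly the paper's construction of gluing horizontal concatenations (via Proposition~\ref{hconcat}) of the two fundamental invariants and then stacking vertically and evaluating on a product tensor $Y\otimes Z$ (via Proposition~\ref{productlemma}).
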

\begin{proof}
 	(i) Let $m=\delta(n_{1},\ldots,n_{d})$ and $T$ be a balanced table associated with minimal degree $m$ invariant polynomial $\Delta_{T} \neq 0$. Then each column of $T$ is one of the possible $k_{1} \cdots k_{d}$ columns, where $k_i = m/n_i$. Hence if $m > k_{1}\ldots k_{d}$ then the table $T$ has two equal columns and by Corollary~\ref{identicalcol} we get $\Delta_{T} = 0$. Therefore, $$m \le k_{1}\ldots k_{d} = m^{d} (n_{1}\ldots n_{d})^{-1}$$ which implies the inequality. 
	
	(ii) Denote $m_{1} =\delta_{}(n_1,\ldots, n_{\ell}), m_{2} =\delta_{}(n_{\ell + 1}, \ldots, n_d)$ and let $M = \mathrm{lcm}(m_{1}, m_{2})$.
	We construct a nonzero invariant polynomial in $\mathrm{Inv}(n_{1},\ldots,n_{d})$ using invariants from $\mathrm{Inv}(n_{1}, \ldots, n_{\ell})$ and $\mathrm{Inv}(n_{\ell+1}, \ldots, n_d)$. Let $\Delta_{T_{1}} \in \mathrm{Inv}(n_{1}, \ldots, n_{\ell})_{m_{1}}$ and $\Delta_{T_{2}} \in \mathrm{Inv}(n_{\ell+1}, \ldots, n_d)_{m_{2}}$ be fundamental (nonzero) invariants. Let $\tilde T_1 = (T_1 \cdots T_1)$ ($M/m_1$ times) and $\tilde T_2 = (T_2 \cdots T_2)$ ($M/m_2$ times) be $\ell \times M$ and $(d - \ell) \times M$ balanced tables obtained by horizontal concatenations. Let $T = \left(\substack{\tilde T_{1}\\ \tilde T_{2}}\right)$ be $d \times M$ balanced table obtained by vertical concatenation. Let $Y, Z$ be tensors such that $\Delta_{T_1}(Y) \ne 0, \Delta_{T_2}(Z) \ne 0$. By Propositions~\ref{productlemma} and \ref{hconcat}, we have 
	$$
	\Delta_{T}(Y \otimes Z) = \Delta_{\tilde T_1}(Y) \cdot \Delta_{\tilde T_2}(Z) = \Delta_{T_1}(Y)^{M/m_1} \cdot \Delta_{T_2}(Z)^{M/m_2} \ne 0.
	$$
	Hence $\Delta_{T}$ is a nonzero invariant of degree $M$ and we get $\delta(n_1,\ldots, n_d) \le M$.
\end{proof}

\begin{corollary}\label{corlb}
	Let $d \ge 3$ be odd. We have 
	$$\delta_{d}(n) \ge n \lceil {n^{1/(d - 1)}} \rceil.$$ 
\end{corollary}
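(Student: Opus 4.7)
The plan is to derive this immediately from part (i) of the theorem applied to the cubical case $n_1 = \cdots = n_d = n$, combined with the divisibility fact that $n$ divides $\delta_d(n)$ (as noted just before the theorem statement, and also stated in Proposition~\ref{prop:span}).

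First I would substitute $n_i = n$ for all $i \in [d]$ into part (i), which gives
\[
\delta_d(n) \;\ge\; \bigl\lceil n^{d/(d-1)} \bigr\rceil \;=\; \bigl\lceil n \cdot n^{1/(d-1)} \bigr\rceil.
\]
This alone is not literally the desired bound (the ceiling is applied to the whole product, not just to $n^{1/(d-1)}$), so the second step is to use the integrality/divisibility. Writing $\delta_d(n) = n k$ with $k \in \mathbb{N}$, the inequality becomes $nk \ge n^{d/(d-1)}$, i.e.\ $k \ge n^{1/(d-1)}$. Since $k$ is a positive integer, this upgrades to $k \ge \lceil n^{1/(d-1)} \rceil$, and multiplying through by $n$ yields
\[
\delta_d(n) \;\ge\; n \bigl\lceil n^{1/(d-1)} \bigr\rceil,
\]
which is the claimed bound.

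There is no real obstacle here: the content of the corollary lies entirely in part (i) of the theorem, and the passage from $\lceil n \cdot n^{1/(d-1)} \rceil$ to the slightly stronger $n \lceil n^{1/(d-1)} \rceil$ is just the observation that the minimal degree must be a multiple of $n$ in the cubical case. One could alternatively phrase the argument directly: by Corollary~\ref{identicalcol} any nonzero $\Delta_T$ of degree $m = nk$ requires the $d \times nk$ balanced table $T$ to have pairwise distinct columns, so $nk \le k^d$, giving $k^{d-1} \ge n$ and hence $k \ge \lceil n^{1/(d-1)} \rceil$.
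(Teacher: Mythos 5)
Your proof is correct and follows the paper's intended derivation: the corollary is stated without proof precisely because it follows from Theorem~4.1(i) in the cubical case $n_1=\cdots=n_d=n$ together with the divisibility $n \mid \delta_d(n)$ (noted just before the theorem and implicit in Proposition~\ref{prop:span}), exactly as you argue. Your observation that the ceiling must be pushed inside the product $n \cdot n^{1/(d-1)}$ using integrality of $\delta_d(n)/n$ is the right (and only nontrivial) point, and your alternative direct argument via Corollary~\ref{identicalcol} is precisely what underlies the proof of Theorem~4.1(i) itself.
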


\begin{corollary}\label{cormon}
	We have: $\delta_{d}(n) \le \delta_{d - 2}(n)$ for $d > 3$. 
\end{corollary}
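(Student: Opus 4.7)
The plan is to apply part~(ii) of the preceding theorem with the splitting $\ell = 2$ and all dimensions equal to $n$. Since $d > 3$ we have $d \ge 4$ and hence $\ell = 2$ lies in the admissible range $[2, d-2]$, so the theorem yields
$$\delta_d(n) \le \mathrm{lcm}\bigl(\delta_2(n),\, \delta_{d-2}(n)\bigr).$$

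The rest reduces to two elementary observations. First, $\delta_2(n) = n$: for $d=2$ the fundamental invariant of $\mathrm{SL}(n) \times \mathrm{SL}(n)$ on $\mathbb{C}^n \otimes \mathbb{C}^n$ is the determinant (this is Cayley's first hyperdeterminant, of degree $n$, cf.~Example~\ref{cayley1h}). Second, by the divisibility noted at the start of this section, each $n_i$ divides $\delta(n_1,\ldots,n_d)$, so in particular $n \mid \delta_{d-2}(n)$. Combining these, $\mathrm{lcm}(n, \delta_{d-2}(n)) = \delta_{d-2}(n)$, and the inequality $\delta_d(n) \le \delta_{d-2}(n)$ follows.

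The finiteness hypothesis of part~(ii) is trivially verified: $\delta_2(n) = n < \infty$, and either $d-2$ is even with $\delta_{d-2}(n) = n$, or $d - 2 \ge 3$ is odd with $\delta_{d-2}(n) \le n^2$ by Theorem~\ref{thm:one}(i). I do not anticipate any real obstacle here; at the level of the explicit construction inside the proof of~(ii), the invariant realizing the bound is obtained by vertically concatenating $\delta_{d-2}(n)/n$ horizontal copies of the $2 \times n$ all-ones table (yielding a power of the determinant in the first two tensor factors) with a fundamental $(d-2)$-tensor invariant of degree $\delta_{d-2}(n)$, and then evaluating on a decomposable tensor via Proposition~\ref{productlemma} to witness nonvanishing.
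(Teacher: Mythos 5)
Your argument is essentially identical to the paper's proof: apply Theorem~(ii) with $\ell=2$, use $\delta_2(n)=n$ (ordinary determinant) and $n\mid\delta_{d-2}(n)$ to collapse the lcm to $\delta_{d-2}(n)$. One small caveat in the side remark: invoking Theorem~\ref{thm:one}(i) to verify finiteness of $\delta_{d-2}(n)$ is circular, since the paper derives Theorem~\ref{thm:one}(i) from this very corollary together with Corollary~\ref{corlb} and Lemma~\ref{lemmaub}; the clean fix is either to observe that the claimed inequality is vacuous when $\delta_{d-2}(n)=\infty$, or to argue by induction on odd $d$ with base case $\delta_3(n)\le n^2$ from Lemma~\ref{lemmaub} alone.
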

\begin{proof}
Note that $\delta_2(n) = n$ (whose invariant is the ordinary determinant). Then the upper bound gives:
$$\delta_d(n) \le \mathrm{lcm}(\delta_2(n), \delta_{d-2}(n)) = \mathrm{lcm}(n, \delta_{d-2}(n)) = \delta_{d-2}(n).$$
as needed.
\end{proof}


In particular, $\delta_d(n) \le \delta_3(n)$ for all odd $d \ge 3$. Now we show an upper bound for $\delta_3(n)$.


\begin{lemma}\label{lemmaub}
	We have: $\delta_{3}(n) \le n^{2}$. 
\end{lemma}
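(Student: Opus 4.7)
The plan is to avoid any explicit construction of a balanced table and instead deduce the bound purely from the dimension formula for the space of invariants together with a previously cited positivity result for Kronecker coefficients. The key observation is that, in the dimension formula relating $\dim \mathrm{Inv}(n_1,\ldots,n_d)_m$ to a generalized Kronecker coefficient, the choice $d = 3$, $n_1 = n_2 = n_3 = n$ and $m = n^2$ is the first nontrivial place where all three partitions involved are squares, because then each $k_i = m/n_i$ equals $n$ and
\[
\dim \mathrm{Inv}_3(n)_{n^2} \;=\; g(n\times n,\, n\times n,\, n\times n).
\]

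The right-hand side is strictly positive for every $n \in \mathbb{N}$ by the square positivity theorem of Bessenrodt and Behns, which is recorded as Lemma~\ref{kron}(d). Applying this with $k = n$ gives $\dim \mathrm{Inv}_3(n)_{n^2} > 0$, so there is at least one nonzero $G$-invariant polynomial of degree $n^2$ on $V = (\mathbb{C}^n)^{\otimes 3}$, and hence $\delta_3(n) \le n^2$ by definition of $\delta_3$.

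Because the whole argument is a one-line combination of two cited facts, there is no real technical obstacle. The conceptual point worth highlighting is that one does not need to exhibit a specific balanced table $T$ with $\Delta_T \neq 0$ in order to establish the bound; this is precisely what makes the square-positivity input valuable here. A direct constructive proof, for instance via a $3 \times n^2$ balanced table with the first two rows built from the lex order on $[n]^2$ and the third row specified by a Latin square of order $n$, would force one either to find a tensor at which to evaluate or to perform a delicate combinatorial analysis to verify nonvanishing, which is bypassed entirely by the Kronecker-dimensional approach.
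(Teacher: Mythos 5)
Your proof is correct and is essentially the paper's own argument: both cite the dimension formula $\dim \mathrm{Inv}_3(n)_{n^2} = g(n\times n, n\times n, n\times n)$ and then invoke the Bessenrodt--Behns square positivity (Lemma~\ref{kron}(d)) to conclude. The closing discussion about why a constructive balanced-table proof is harder is fine commentary but not part of the argument and matches the paper's later remark that an explicit degree-$n^2$ invariant is only given conditionally on the Alon--Tarsi conjecture.
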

\begin{proof}
	Recall that $\dim \mathrm{Inv}_3(n)_{n^{2}} = g(n \times n, n \times n, n \times n) > 0$. Hence there exists an invariant of degree $n^2$ and so $\delta_3(n) \le n^2$.
\end{proof}
We discuss an explicit construction of an invariant of degree $n^2$ in \textsection\,\ref{invn2}; it is conditional on the Alon--Tarsi conjecture for even $n$. 

Note that Theorem~\ref{thm:one}~(i) now follows from Corollary~\ref{corlb}, Corollary~\ref{cormon}, and Lemma~\ref{lemmaub}. 


\section{Dimension sequences}\label{sec:dimseq}
Recall that we denote 
$$
g^{}_d(n, k) := \dim \mathrm{Inv}_d(n)_{kn} = g(\underbrace{n \times k, \ldots, n \times k}_{d \text{ times}}).
$$

\begin{theorem}\label{gthm}
Let $d \ge 3$ be odd. The following properties hold.

(i) Bounds: 
$$
g^{}_d(n, k) \le \binom{k^d}{nk}.
$$
In particular,
$g^{}_d(n, k) = 0  \text{ for } n > k^{d - 1}.$

(ii) Symmetry: 
$$g^{}_d(n, k) = g^{}_d(k^{d - 1} - n, k) \text{ for } n \in [0, k^{d - 1}].$$
In particular, $g_d(0, k) = g_d(k^{d - 1}, k) = 1$.

(iii) Positivity: For fixed $k$, if the sequence $\{g^{}_d(n, k) \}_{n = 0}^{k^{d - 1}}$ is positive for $d= 3$, then it is positive for all odd $d > 3$.
\end{theorem}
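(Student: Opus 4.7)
For (i), my plan is to combine Proposition~\ref{prop:span}, which shows that $\mathrm{Inv}_d(n)_{nk}$ is spanned by the polynomials $\Delta_T$, with Corollary~\ref{identicalcol}, which says that for odd $d$ the polynomial $\Delta_T$ vanishes whenever $T$ has two equal columns. Since the columns of a balanced $d \times nk$ table take values in $[k]^d$, a nonzero $\Delta_T$ arises from a subset of $[k]^d$ of size $nk$ (subject to the balance constraint), so counting such subsets yields $g_d(n, k) \le \binom{k^d}{nk}$. The vanishing $g_d(n, k) = 0$ for $n > k^{d - 1}$ is then immediate since $nk > k^d$ leaves no room for $nk$ distinct columns.

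For (ii), the symmetry follows from the complementary rectangular identity of Lemma~\ref{kron}(e). In the case $d = 3$, apply it with $(a, b, c) = (k, k, k)$: each $n \times k$ sits in $k^2 \times k$ (since $n \le k^2$), and triple-complementation produces the rectangles $(k^2 - n) \times k$. For odd $d > 3$, I plan to iterate this identity via the tensor-product decomposition of Kronecker coefficients, complementing the arguments three at a time; equivalently, this amounts to a $d$-fold rectangular complementation identity with all parameters $a_1 = \cdots = a_d = k$, which can be bootstrapped from the 3-fold case. The boundary values $g_d(0, k) = g_d(k^{d - 1}, k) = 1$ correspond respectively to the constant invariant and to the unique top-degree invariant.

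For (iii), I proceed by induction on odd $d$, with the base case $d = 3$ being the hypothesis. The inductive step from $d - 2$ to $d$ is based on the tensor decomposition
\[
g_d(n, k) \;=\; \sum_{\nu \vdash nk} g(n \times k, n \times k, n \times k, \nu) \cdot g\bigl(\nu, \underbrace{n \times k, \ldots, n \times k}_{d - 3}\bigr),
\]
which follows from $[n \times k]^{\otimes d} = [n \times k]^{\otimes 3} \otimes [n \times k]^{\otimes(d - 3)}$. Restricting to $\nu = n \times k$, the first factor becomes $g_4(n, k) \ge 1$ (via the $k$-th power of Cayley's hyperdeterminant, since $d' = 4$ is even), while the second factor is $g_{d - 2}(n, k)$, positive for $n \in [0, k^{d - 3}]$ by the inductive hypothesis. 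Combined with the symmetry of (ii), this already gives $g_d(n, k) > 0$ on $[0, k^{d - 3}] \cup [k^{d - 1} - k^{d - 3}, k^{d - 1}]$.

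The main obstacle is to cover the remaining middle range $n \in (k^{d - 3}, k^{d - 1} - k^{d - 3})$. For this I plan to exploit the full strength of the hypothesis $g_3(n', k) > 0$ for all $n' \in [0, k^2]$ by selecting a non-trivial $\nu \neq n \times k$ in the decomposition: one uses the semigroup property Lemma~\ref{kron}(g) together with the additive rectangular identity Lemma~\ref{kron}(e) to engineer $\nu$'s of ``sum-of-rectangles'' type inside the Kronecker support of $(n \times k, n \times k, n \times k)$, and then argues that for such $\nu$ the second factor $g(\nu, n \times k, \ldots, n \times k)$ also remains positive, possibly after an additional application of (ii) or of a refined version of the inductive hypothesis.
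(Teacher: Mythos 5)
Part (i) is essentially the paper's argument: both bound $g_d(n,k)$ by counting balanced $d \times nk$ tables with distinct columns via Proposition~\ref{prop:span} and Corollary~\ref{identicalcol}. Part (ii) is correct for $d = 3$, but for $d > 3$ your plan to ``iterate the 3-fold complementation'' or invoke a ``$d$-fold rectangular complementation identity'' has a real gap: there is no such identity off the shelf, and to complement links in a chain decomposition one must first know that the intermediate partitions $\mu^{(i)}$ in a nonzero term of
\[
g_d(n,k) = \sum_{\mu^{(2)},\ldots,\mu^{(d-2)}} \prod_i g(\mu^{(i)}, k\times n, \mu^{(i+1)})
\]
actually fit inside the required rectangles $k^i \times k^{d-i}$. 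That containment is the content of the paper's rectangular bound (Lemma~\ref{cdnrect}), proved by a two-sided induction via Dvir's support bound Lemma~\ref{kron}(f), and is exactly the step missing from your sketch. Without it the complementation identity cannot be applied link by link.

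For part (iii) the gap is more serious. Your inductive step going from $d-2$ to $d$ by restricting to $\nu = n\times k$ in the four-way/$(d-2)$-way split gives positivity only on $n \in [0, k^{d-3}] \cup [k^{d-1}-k^{d-3}, k^{d-1}]$, which for $d \ge 5$ is a vanishingly small fraction of $[0,k^{d-1}]$. You correctly identify the middle range as the ``main obstacle,'' but the plan to ``engineer $\nu$'s of sum-of-rectangles type'' inside the Kronecker support and hope the downstream factor survives is not a proof. The paper closes this gap with a genuinely different and more intricate construction: it inducts from $d$ to $d+2$, parameterizes every $N \in [0, k^{d+1}]$ as $N = \ell k^{d-1} + n$ with $\ell \in [0, k^2-1]$ and $n \in [0, k^{d-1}]$, builds an explicit chain $\nu^{(i)} = k^i \times \ell k^{d-i} + \mu^{(i)}$ from a positive chain for $g_d(n,k)$, and hand-constructs the last two partitions $\nu^{(d)}, \nu^{(d+1)}$. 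The positivity of the two new links is then obtained from the $d=3$ hypothesis applied at the parameters $\ell$ and $\ell+1$ (i.e.\ the positivity of $g_3(\ell,k)$ and $g_3(\ell+1,k)$ for every $\ell \le k^2-1$) together with the semigroup property; so the full strength of the $d=3$ hypothesis enters precisely where your sketch is vague. As written, your proof of (iii) does not close.
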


\begin{lemma}
Let $d \ge 5$ be odd. The following formula holds:
\begin{align}\label{eq:cdnkron}
g_d(n, k) = \sum_{\mu^{(2)},\ldots\mu^{(d-2)}}  g(\mu^{(1)}, k\times n, \mu^{(2)}) g(\mu^{(2)}, k\times n, \mu^{(3)})\cdots g(\mu^{(d-2)}, k\times n, \mu^{(d-1)}),
\end{align}
where $\mu^{(1)} =k \times n$ and $\mu^{(d-1)} = n\times k$. 
\end{lemma}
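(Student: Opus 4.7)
The plan is to derive the identity in two steps: first apply the standard iterated decomposition of a generalized Kronecker coefficient coming from associativity of tensor products of symmetric group representations, then use the conjugation symmetries in Lemma~\ref{kron} to convert the middle partitions from $n\times k$ to $k\times n$.

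For the first step, I would interpret $g_d(n,k) = g(n\times k,\ldots, n\times k)$ as the multiplicity of $[n\times k]$ in the iterated tensor product $[n\times k]^{\otimes(d-1)}$ of $S_{nk}$-representations, and decompose this tensor product one factor at a time via the definition of the Kronecker coefficient. This yields the well-known iterated identity
$$g_d(n,k) = \sum_{\mu^{(2)},\ldots,\mu^{(d-2)}} g(n\times k, n\times k, \mu^{(2)}) \cdot \prod_{i=2}^{d-3} g(\mu^{(i)}, n\times k, \mu^{(i+1)}) \cdot g(\mu^{(d-2)}, n\times k, n\times k),$$
valid for any $d\ge 3$, with no parity constraint.

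For the second step, I would rename the summation variables by setting $\tilde\mu^{(i)} := \mu^{(i)}$ for even $i$ and $\tilde\mu^{(i)} := (\mu^{(i)})'$ for odd $i$ (with $2\le i\le d-2$); this is a bijective relabelling of the sum over partitions of $nk$. By parts (a) and (b) of Lemma~\ref{kron}, any two of the three arguments of a Kronecker coefficient may be conjugated simultaneously without changing the value. In each interior factor $g(\mu^{(i)}, n\times k, \mu^{(i+1)})$ the relabelling conjugates exactly one of $\mu^{(i)}$ or $\mu^{(i+1)}$; pairing that conjugation with one on the middle argument converts $n\times k$ into $k\times n$, yielding $g(\tilde\mu^{(i)}, k\times n, \tilde\mu^{(i+1)})$.

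The two boundary factors need separate checking, and this is where oddness of $d$ enters. The first factor $g(n\times k, n\times k, \mu^{(2)})$ becomes $g(k\times n, k\times n, \tilde\mu^{(2)})$ after conjugating its first two arguments, matching the target form with $\mu^{(1)} := k\times n$. For the last factor $g(\mu^{(d-2)}, n\times k, n\times k)$: since $d$ is odd, $d-2$ is odd and thus $\mu^{(d-2)} = (\tilde\mu^{(d-2)})'$, so conjugating the first two arguments gives $g(\tilde\mu^{(d-2)}, k\times n, n\times k)$, matching the target with $\mu^{(d-1)} := n\times k$. No step is genuinely hard; the only thing to track carefully is the alternating conjugation pattern across the interior factors and the parity-dependent match of the two boundary terms — the latter is precisely where oddness of $d$ is used (for even $d$ the same argument would instead produce $\mu^{(d-1)} = k\times n$).
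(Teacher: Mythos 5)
Your proof is correct and amounts to the same argument as the paper's: the paper applies the conjugation identity $[n\times k]\otimes[n\times k]=[k\times n]\otimes[k\times n]$ (using that $d-1$ is even) to rewrite the whole tensor product as $[k\times n]^{\otimes(d-1)}$ and then decomposes iteratively, whereas you decompose first and then apply the same conjugation symmetry $g(\lambda,\mu,\nu)=g(\lambda,\mu',\nu')$ term by term under an alternating relabelling of the summation variables. The two presentations commute and use the parity of $d$ at the same boundary factor.
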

\begin{proof}
By conjugation we have 
$[n \times k] \otimes [n \times k] \simeq [k \times n] \otimes [k \times n]$
and hence obtain that  
$$
g_d(n, k) = \text{mult. } [{n \times k}] \text{ in } \underbrace{[{n \times k}] \otimes \cdots \otimes [{n \times k}]}_{(d-1) \text{ times}} = \text{mult. } [{n \times k}] \text{ in } \underbrace{[{k \times n}] \otimes \cdots \otimes [{k \times n}]}_{(d-1) \text{ times}}. 
$$
The formula \eqref{eq:cdnkron} then follows by iteratively decomposing the last tensor product expression.
\end{proof}

For a partition $\lambda$ denote the {\it width} $w(\lambda) := \ell(\lambda') = \lambda_{1}$.

\begin{lemma}[Rectangular bound]\label{cdnrect}
	Let $\mu^{(1)}=k \times n,\mu^{(2)},\ldots,\mu^{(d-2)},\mu^{(d-1)}= n \times k$ be partitions such that  
	$$g(\mu^{(i)},k\times n,\mu^{(i+1)}) > 0 \text{ for } i = 1,\ldots,d-2.$$ 
	Then $\mu^{(i)} \subseteq k^{i} \times k^{d-i}$ for $i = 1,\ldots,d-1$. 
\end{lemma}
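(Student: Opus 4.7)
The plan is to bound $\ell(\mu^{(i)})$ and $w(\mu^{(i)})$ separately, each by a one-step recursion using Lemma~\ref{kron}(f), propagating the length bound forward from $\mu^{(1)}=k\times n$ and the width bound backward from $\mu^{(d-1)}=n\times k$.

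\emph{Length (forward).} Apply the size bound in Lemma~\ref{kron}(f): from $g(\mu^{(i)},k\times n,\mu^{(i+1)})>0$ we get
\[
\ell(\mu^{(i+1)}) \le \ell(\mu^{(i)})\cdot \ell(k\times n) = k\cdot \ell(\mu^{(i)}).
\]
Starting from $\ell(\mu^{(1)})=k$, an easy induction yields $\ell(\mu^{(i)})\le k^{i}$ for all $i$.

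\emph{Width (backward).} To convert widths to lengths, first move $k\times n$ to the first slot using the $S_{3}$-symmetry (a), then apply conjugation (b) to the remaining two entries:
\[
g(\mu^{(i)},k\times n,\mu^{(i+1)}) = g(k\times n,\mu^{(i)},\mu^{(i+1)}) = g\bigl(k\times n,(\mu^{(i)})',(\mu^{(i+1)})'\bigr) > 0.
\]
Now (f) gives $\ell((\mu^{(i+1)})')\le \ell(k\times n)\cdot\ell((\mu^{(i)})')$, i.e. $w(\mu^{(i+1)})\le k\cdot w(\mu^{(i)})$; and by (a) the analogous inequality $w(\mu^{(i)})\le k\cdot w(\mu^{(i+1)})$ holds. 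Starting from $w(\mu^{(d-1)})=k$ and iterating this last inequality backward in $i$ gives $w(\mu^{(i)})\le k^{d-i}$.

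Combining the two bounds, $\mu^{(i)}\subseteq k^{i}\times k^{d-i}$, which is what we want. There is no real obstacle here; the only nonroutine step is the observation that after applying conjugation to $(\mu^{(i)},\mu^{(i+1)})$ the size bound (f) reads off widths instead of lengths, which is exactly what allows the backward propagation from the rectangular endpoint $\mu^{(d-1)}=n\times k$.
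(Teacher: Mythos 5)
Your proof is correct. The forward (length) step is identical to the paper's, both using the inequality $\ell(\lambda)\le\ell(\mu)\ell(\nu)$ from Lemma~\ref{kron}(f). For the backward (width) step you diverge from the paper: you conjugate two arguments via Lemma~\ref{kron}(b) and then reuse the same length inequality from (f), turning it into a width recursion $w(\mu^{(i)})\le k\,w(\mu^{(i+1)})$; the paper instead invokes the other half of Lemma~\ref{kron}(f), namely the bound $\max\{\lambda_1 : g(\lambda,\mu,\nu)>0\}=|\mu\cap\nu|$, and estimates $|\mu^{(i)}\cap(k\times n)|\le k\cdot k^{d-i}$ under the inductive width hypothesis. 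Your route is a little more uniform --- it treats length and width symmetrically through conjugation and needs only one clause of (f) --- while the paper's route avoids conjugation at the cost of invoking the $|\mu\cap\nu|$ clause. Both are one-line-per-step recursions and give the same conclusion, so the difference is cosmetic. One small note on presentation: when you write ``by (a) the analogous inequality $w(\mu^{(i)})\le k\,w(\mu^{(i+1)})$ holds,'' it would be slightly clearer to say explicitly that you swap the roles of $(\mu^{(i)})'$ and $(\mu^{(i+1)})'$ in the $S_3$-symmetric coefficient before applying (f) again, since that is the inequality you actually propagate backward.
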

\begin{proof}
	We first prove that $\ell(\mu^{(i)}) \le k^{i}$ by induction on $i=1,\ldots, d-2$.  
	For $i=1$ we know that $\ell(k\times n) = k^{1}$. Now assume $\ell(\mu^{(i)}) \le k^{i}$. By Lemma~\ref{kron}(f) the positivity $g(\mu^{(i)},k\times n, \mu^{(i+1)}) > 0$ implies that
	$$\ell(\mu^{(i+1)}) \le \ell(\mu^{(i)}) \cdot \ell(k\times n) \le k^{i+1}$$
	as desired. 
	
	Let us now show that $w(\mu^{(i)}) \le k^{d-i}$ by reverse induction on $i= d-1, \ldots, 1$. For $i = d-1$ we know that $w(n \times k) = k$. Now assume $w(\mu^{(i)}) \le k^{d-i}$. Again by Lemma~\ref{kron}(f) the positivity $g(\mu^{(i)},k\times n, \mu^{(i+1)}) > 0$ implies
	$$w(\mu^{(i-1)}) \le |\mu^{(i)} \cup (k \times n)| \le |(k^{i} \times k^{d-i}) \cup (k \times n)| \le k^{d-i+1}$$
	and the proof follows. 
\end{proof}

\subsection*{Proof of Theorem~\ref{gthm}}
\begin{proof}[(i)]
The Kronecker coefficient $g_{d}(n,k)$ is bounded from above by the number of $d \times nk$ balanced tables with entries from $[k]$ and distinct columns, where the order or columns is not important. 
The maximal number of distinct columns with entries from $[k]$ is at most $k^d$ and hence  the number of such $d \times nk$ balanced tables (as an ordered set of columns) is bounded above by $\binom{k^d}{nk}$. 
\end{proof}

\begin{remark}
Let us also note that this is a quite rough bound; using balanced tables it is also easy to show that 
$$
g_d(n,k) \le \frac{1}{k!^{d-1}} \binom{nk}{n,\ldots, n}^{d-1} = \left(\frac{(nk)!}{n!^k k!} \right)^{d-1}.
$$
\end{remark}
\begin{proof}[(ii)]
	Let $\mu^{(1)}=k\times n, \mu^{(2)},\ldots, \mu^{(d-2)},\mu^{(d-1)}=n\times k$ be partitions representing a nonzero term in the expansion \eqref{eq:cdnkron} of $g_d(n, k)$. 
	By Lemma~\ref{cdnrect} we  have $\mu^{(i)} \subseteq k^{i} \times k^{d-i}$. We are going to map every such $(d-1)$-tuple of partitions $\mu^{(i)}$ to a $(d-1)$-tuple of partitions $\nu^{(i)}$ from the similar expansion of $g_d(k^{d - 1} - n, k)$. 
	
	Let $\nu^{(i)} = k^{i} \times k^{d-i} -_{k^{i}} \mu^{(i)}$ for $i=1,\ldots,d-1$. In particular, $\nu^{(1)} = k \times (k^{d-1} - n)$ and $\nu^{(d-1)}= (k^{d-1} - n) \times k$. 
	In Lemma~\ref{kron}(e) we set $(a,b,c) \to (k^{i},k,k^{d-i-1})$ and apply conjugation to obtain 
	\begin{align*}
	g(\mu^{(i)}, k \times n, &\mu^{(i+1)})  = 
	g((\mu^{(i)})', n \times k, \mu^{(i+1)}) \\
	&= g(
	k^{d-i} \times k^{i} -_{k^{d-i}}(\mu^{(i)})', 
	k^{d-1} \times k -_{k^{d-1}} n \times k, 
	k^{i+1} \times k^{d-i-1} -_{k^{i+1}}\mu^{(i+1)}
	)
	\\&=g(
	k^{i} \times k^{d-i} -_{k^{i}}\mu^{(i)}, 
	k \times (k^{d-1} - n), 
	k^{i+1} \times k^{d-i-1} -_{k^{i+1}}\mu^{(i+1)})
	\\&=g(
	\nu^{(i)}, 
	k \times (k^{d-1} - n), 
	\nu^{(i+1)})
	\end{align*}
	for each $i = 1,\ldots,d-2$. Therefore, nonzero terms in the expansions \eqref{eq:cdnkron} for $g_d(n, k)$ and $g_d(k^{d - 1} - n, k)$ can be bijectively mapped to each other, which implies the symmetry $g_d(n, k) = g_d(k^{d - 1} - n, k)$ for all $n \in [0,k^{d-1}]$.
\end{proof}

\begin{proof}[(iii)]
	We proceed by induction on odd $d \ge 3$. The base case $d = 3$ is the given condition. Assume $d \ge 3$ is odd and $g_d(n, k) > 0$ for all $n \in [0, k^{d-1}]$. 
	
	Let us show that $g_{d+2}(n, k) >  0$ for all $n \in [0, k^{d+1}]$. 
	Fix $n \in [0,k^{d-1}]$ and expand $g_d(n, k)$ as in \eqref{eq:cdnkron}. Since $g_d(n, k) > 0$ there are partitions $\mu^{(1)}=k\times n, \mu^{(2)},\ldots,\mu^{(d-2)},\mu^{(d-1)}=n\times k$ such that
	\begin{align}\label{dtuple}
		g(\mu^{(1)}, k\times n, \mu^{(2)})\,
		g(\mu^{(2)}, k\times n, \mu^{(3)})
		\cdots
		g(\mu^{(d-2)},k\times n, \mu^{(d-1)}) > 0.
	\end{align}
	For each $\ell = 0,\ldots,k^{2}-1$ and $N=\ell k^{d-1} + n$ we will prove the positivity of $g_{d+2}(N, k)$. Note that $N$ ranges in $[\ell k^{d-1}, (\ell+1)k^{d-1}]$ as $n$ ranges in $[0,k^{d-1}]$.
	Set $\nu^{(i)} = k^{i} \times \ell k^{d-i} + \mu^{(i)}$ for $i = 1,\ldots,d-1$. 
	By Lemma~\ref{cdnrect} we have $\mu^{(i)} \subseteq k^{i} \times k^{d-i}$.
	Applying Lemma~\ref{kron}(e) for $\ell$ times with the parameters $(a,b,c)=(k^{i},k,k^{d-i-1})$ we obtain
	\begin{align*}
		g(\mu^{(i)},k\times n,\mu^{(i+1)}) 
		&= 
		g(
			\mu^{(i)} + k^{i} \times \ell k^{d-i}, 
			k \times n + k \times \ell k^{d-1}, 
			\mu^{(i+1)} + k^{i+1} \times \ell k^{d-i-1}
			)\\
		&= g(\nu^{(i)},k\times N,\nu^{(i+1)})  > 0
	\end{align*}
	for all $i = 1,\ldots,d-2$ and $\nu^{(i)} \subseteq  k^{i} \times k^{d+2-i}$. Denote $m = k^{d-1} - n \in [0,k^{d-1}]$.
	Then 
	\begin{align*}
		\nu^{(d-1)} = k^{d-1} \times \ell k + n \times k, 
		\qquad
		(\nu^{(d-1)})' = (\ell+1)k \times n + \ell k \times m.
	\end{align*}
	Let us now define the remaining two partitions for $i = d,d+1$:
	\begin{align*}
		\nu^{(d)} &= ( \ell \times mk + (\ell+1)\times nk )', \\
		\nu^{(d+1)} &= N \times k, 
		\qquad (\nu^{(d+1)})' = k \times (\ell+1)n + k \times \ell m.
	\end{align*}
	Then we claim that the following two coefficients are also positive:
	\begin{align*}
		g(\nu^{(d-1)}, k \times N, \nu^{(d)}) > 0, \quad
		g(\nu^{(d)}, k \times N, \nu^{(d+1)}) > 0,
	\end{align*}
	or by conjugating the arguments in both coefficients we can rewrite them as
	\begin{align}
		g(
			(\ell+1)k \times n + \ell k \times m,\,
			k \times (\ell+1)n + k \times \ell m,\,
	 			(\ell+1) \times kn + \ell \times km) > 0, \label{eq:g1} \\
		g(
			(\ell+1) \times kn + \ell \times km,\, 
			k \times (\ell+1)n + k \times \ell m,\,
			k \times (\ell+1)n + k \times \ell m) > 0 .
			\label{eq:g2}
	\end{align}
	Indeed, in Lemma~\ref{kron}(e) set $(a,b,c) \to (n,k,\ell+1)$ and $(a,b,c) \to (m, k, \ell)$ and apply the semi-group property to the resulting coefficients to obtain the positivity \eqref{eq:g1}. Further, since $\ell \in [0, k^{2}-1]$, by assumption the coefficients
	$g((\ell+1) \times k, k \times (\ell+1),k \times (\ell+1))$
	and $g(\ell \times k, k \times \ell, k \times \ell)$ are positive,
	and hence
	$g(\ell \times km, k \times \ell m , k \times \ell m)$ and 
	$g((\ell+1) \times kn, k \times (\ell+1)n , k \times (\ell+1)n)$ are positive as well by applying the semi-group property to itself $m$ and $n$ times, respectively. Combining the last two coefficients and the semi-group property we obtain the positivity of \eqref{eq:g2}.
	So we showed that $g(\nu^{(i)}, k \times N, \nu^{(i+1)})$ is positive for all $i \in [1,d+1]$, hence the expansion \eqref{eq:cdnkron} of $g_{d+2}(N, k)$ contains a positive term. This completes the induction step. 
\end{proof}

A direct computation (see Appendix~\ref{appa}) shows that $\{g_{3}(n,k)\}_{n = 0}^{k^{2}}$ is positive for $k = 2, 4$. Therefore we obtain the following result.

\begin{corollary}\label{cork24}
For all odd $d \ge 3$ and $k = 2, 4$, the sequence $\{g_{d}(n,k)\}_{n = 0}^{k^{d - 1}}$ is positive.
\end{corollary}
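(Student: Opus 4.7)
The plan is to invoke Theorem~\ref{gthm}(iii) to reduce the problem to the base case $d = 3$, and then handle the two values $k = 2$ and $k = 4$ by direct computation of rectangular Kronecker coefficients $g(n \times k, n \times k, n \times k)$ for $n \in [0, k^2]$. Once the positivity is established for $d = 3$, part (iii) of Theorem~\ref{gthm} immediately extends it to every odd $d \ge 3$.

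For the base case, I would first reduce the amount of computation by appealing to Theorem~\ref{gthm}(ii), which gives $g_{3}(n, k) = g_{3}(k^{2} - n, k)$, so only the values $n \in [0, \lfloor k^{2}/2 \rfloor]$ need to be verified. The boundary values $n = 0$ and $n = k^{2}$ are trivially equal to $1$ since $\dim \mathrm{Inv}_{3}(n)_{0} = 1$ and $n = k^{2}$ corresponds to Cayley's first hyperdeterminant (or alternatively the trivial character identity $g(1 \times k^{2}, 1 \times k^{2}, 1 \times k^{2}) = 1$ after conjugation). For $k = 2$, the remaining cases $n = 1, 2, 3$ (with $n = 3$ following from symmetry) can be checked by hand using the well-known decompositions of tensor products of $S_{2n}$-irreducibles indexed by rectangles of short length. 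For $k = 4$, the computation is more substantial: one has to check $g(n \times 4, n \times 4, n \times 4) > 0$ for $n = 1, 2, \ldots, 8$, which is best done by a computer algebra system using the character-theoretic formula
\[
g(\lambda, \mu, \nu) = \frac{1}{|S_{N}|} \sum_{\pi \in S_{N}} \chi^{\lambda}(\pi)\, \chi^{\mu}(\pi)\, \chi^{\nu}(\pi),
\]
summed over conjugacy classes of $S_{N}$ with $N = nk$.

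The main (and essentially the only) obstacle is the size of the computation for $k = 4$: the symmetric group $S_{32}$ has a large number of conjugacy classes and the partitions involved have large size, so one must rely on effective implementations of character values (e.g., via the Murnaghan--Nakayama rule) rather than on a naive enumeration. This is the reason the actual tabulated values are deferred to Appendix~\ref{appa}, which I would simply cite to conclude the proof: combining the appendix verification of the $d = 3$ case with Theorem~\ref{gthm}(iii) yields positivity of $\{g_{d}(n, k)\}_{n = 0}^{k^{d-1}}$ for all odd $d \ge 3$ and $k \in \{2, 4\}$.
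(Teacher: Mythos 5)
Your proposal matches the paper's proof exactly: reduce to $d = 3$ via Theorem~\ref{gthm}(iii) and cite the Appendix~\ref{appa} computation of $\{g_{3}(n,k)\}_{n=0}^{k^{2}}$ for $k = 2, 4$. The extra remarks about exploiting the symmetry from Theorem~\ref{gthm}(ii) and using character-theoretic formulas are sensible implementation details but do not change the logical structure, which is identical to the paper's.
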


We conjecture that the sequence $\{g_{d}(n,k)\}_{n = 0}^{k^{d - 1}}$ is positive for (a) odd $d \ge 5$ and all $k$, and (b) $d = 3$ and even $k$. Note that for odd $k$ we have $g_3(2, k) = g_3(k^2 - 2, k)= 0$ (e.g. \cite{bvz10}) which seem to be the only exceptional cases; in particular, this gives  
$\delta_3(n) \ge n(k+1)$ 
for odd $n = k^2 - 2$. Furthermore, it seems reasonable to conjecture that for (a), (b) 
the sequence $\{g_{d}(n,k)\}_{n = 0}^{k^{d - 1}}$ (besides being symmetric and positive) is also unimodal. For example, 

\begin{center}
\begin{tabular}{c | c c c c c c c c c c c c c c c c c}
$n$ & 0 & 1 & 2 & 3 & 4 & 5 & 6 & 7 & 8 & 9 & 10 & 11 & 12 & 13 & 14 & 15 & 16 \\
\hline
$g_{3}(n, 4)$ & 1 & 1 & 1 & 2 & 5 & 6 & 13 & 14 & 18 & 14 & 13 & 6 & 5 & 2 & 1 & 1 & 1 \\
$g_{5}(n, 2)$ & 1 & 1 & 5 & 11 & 35 & 52 & 112 & 130 & 166 & 130 & 112 & 52 & 35 & 11 & 5 & 1 & 1
\end{tabular}
\end{center}

\

We provide some more related computational data in Appendix~\ref{appa}. 
  
\begin{lemma}\label{lemmagd}
Let $d \ge 3$ be odd. Assume for fixed $k$ the sequence $\{g^{}_d(n, k) \}_{n = 0}^{k^{d - 1}}$ is positive. Then 

(i) $\delta_{d}(n) = nk$ for all $(k-1)^{d-1} < n \le k^{d-1}$, and 

(ii) $\delta_d(n) \in \{n(k-1), nk \}$ for all $(k-2)^{d-1} < n \le (k-1)^{d-1}$.
\end{lemma}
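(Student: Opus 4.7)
The plan is to combine the lower bound for $\delta_d(n)$ given by Corollary~\ref{corlb} with an upper bound that comes directly from the positivity hypothesis on $g_d(n,k)$, and then exploit the divisibility $n \mid \delta_d(n)$ to pin down the remaining freedom in part (ii).

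\textbf{Step 1 (Upper bound from positivity).} For any $n$ in the range $n \le k^{d-1}$ considered in both parts, the hypothesis gives $g_d(n,k) > 0$. Since $g_d(n,k) = \dim \mathrm{Inv}_d(n)_{nk}$ by definition, this exhibits a nonzero invariant of degree $nk$, so $\delta_d(n) \le nk$.

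\textbf{Step 2 (Lower bound from Corollary~\ref{corlb}).} The cubical lower bound reads $\delta_d(n) \ge n \lceil n^{1/(d-1)} \rceil$. For part (i), the hypothesis $n > (k-1)^{d-1}$ gives $n^{1/(d-1)} > k-1$, hence $\lceil n^{1/(d-1)} \rceil \ge k$, so $\delta_d(n) \ge nk$. Combined with Step 1, we obtain $\delta_d(n) = nk$, which proves (i). For part (ii), the hypothesis $n > (k-2)^{d-1}$ analogously yields $\lceil n^{1/(d-1)} \rceil \ge k-1$, hence $\delta_d(n) \ge n(k-1)$.

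\textbf{Step 3 (Divisibility squeeze for (ii)).} Recall from the discussion preceding Corollary~\ref{corlb} that $n$ divides $\delta_d(n)$. In part (ii) we have sandwiched $n(k-1) \le \delta_d(n) \le nk$, and the only multiples of $n$ in the closed interval $[n(k-1), nk]$ are the endpoints themselves. Therefore $\delta_d(n) \in \{n(k-1), nk\}$, completing (ii).

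There is no real obstacle here; the argument is essentially bookkeeping, combining a known lower bound with an immediate upper bound and using that degrees of invariants must be multiples of $n$. The only thing to be mildly careful about is the strict versus non-strict inequalities: the lower-bound branches rely on strict inequality $n > (k-1)^{d-1}$ (resp.\ $n > (k-2)^{d-1}$) to upgrade $n^{1/(d-1)} > k-1$ (resp.\ $>k-2$) to $\lceil n^{1/(d-1)}\rceil \ge k$ (resp.\ $\ge k-1$), which is exactly how the range of $n$ is stated in the lemma.
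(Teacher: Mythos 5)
Your proof is correct and takes essentially the same approach as the paper: deduce $\delta_d(n)\le nk$ from positivity of $g_d(n,k)=\dim\mathrm{Inv}_d(n)_{nk}$, and deduce the lower bounds from Corollary~\ref{corlb}. The only difference is cosmetic—you explicitly invoke $n\mid\delta_d(n)$ to conclude from $n(k-1)\le\delta_d(n)\le nk$ that $\delta_d(n)\in\{n(k-1),nk\}$, a step the paper leaves implicit in part (ii).
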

\begin{proof}
Since $\dim \mathrm{Inv}_d(n)_{nk} = g_{d}(n,k) > 0$ there is an invariant of degree $nk$ for all $n \le k^{d-1}$. Hence, $\delta_d(n) \le nk$. 
(i) On the other hand, by Corollary~\ref{corlb} we have $\delta_{d}(n) \ge nk$ for $(k-1)^{d-1} < n$. Hence, $\delta_{d}(n) = nk$ for $(k-1)^{d-1} < n \le k^{d-1}$.
(ii) By Corollary~\ref{corlb} we have $\delta_{d}(n) \ge n(k-1)$ for $(k-2)^{d-1} < n$.
\end{proof}

\begin{corollary}
Let $d \ge 3$ be odd. We have 
\begin{align*}
\delta_d(n) = n \lceil n^{1/(d - 1)} \rceil\, \text{ for } n \in \{1, \ldots, 2^{d-1}\} \cup \{3^{d-1}, \ldots, 4^{d-1} \} \cup \{k^{d-1} - 1, k^{d-1}  : k \in \mathbb{N}_{\ge 2}\},
\end{align*}
and also 
$
\delta_d(n) \in \{3n, 4n \} 
\, \text{ for } n \in \{2^{d -1} + 1, \ldots, 3^{d-1}\}.
$
\end{corollary}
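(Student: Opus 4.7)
The plan is to treat this corollary as a bookkeeping assembly of four ingredients already in place in the paper: (i) the universal lower bound $\delta_d(n) \ge n \lceil n^{1/(d-1)} \rceil$ from Corollary~\ref{corlb}; (ii) the positivity statement Corollary~\ref{cork24}, which gives $g_d(n, k) > 0$ for all $n \in [0, k^{d-1}]$ when $k \in \{2, 4\}$; (iii) Lemma~\ref{lemmagd}, which converts such positivity into a sharp (part~(i)) or two-valued (part~(ii)) determination of $\delta_d(n)$; and (iv) the trivial endpoint identities $g_d(0, k) = g_d(1, k) = 1$, which via the symmetry of Theorem~\ref{gthm}(ii) give $g_d(k^{d-1}, k) = g_d(k^{d-1}-1, k) = 1$ for every $k \ge 2$. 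Identity $g_d(0,k) = 1$ is the empty case, while $g_d(1,k) = 1$ follows because $1\times k = (k)$ and $[(k)]$ is the trivial $S_k$-representation, whose tensor powers remain trivial.

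First I would dispose of $n = 1$: the claim reduces to $\delta_d(1) = 1$, which is immediate since $\mathrm{SL}(1)$ acts trivially and the coordinate $X_{1,\ldots,1}$ is an invariant. For $2 \le n \le 2^{d-1}$, I apply Lemma~\ref{lemmagd}(i) with $k = 2$ (its hypothesis supplied by ingredient (ii)) to get $\delta_d(n) = 2n$; on this range $\lceil n^{1/(d-1)} \rceil = 2$, so the formula holds. Symmetrically, for $3^{d-1} < n \le 4^{d-1}$ the same lemma with $k = 4$ yields $\delta_d(n) = 4n = n \lceil n^{1/(d-1)} \rceil$.

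Next, for the endpoint values $n \in \{k^{d-1} - 1, k^{d-1}\}$ with arbitrary $k \ge 2$, ingredient (iv) gives a nonzero invariant of degree $nk$, so $\delta_d(n) \le nk$, while ingredient (i) gives $\delta_d(n) \ge nk$ (noting $\lceil n^{1/(d-1)} \rceil = k$ for such $n$); hence $\delta_d(n) = nk = n \lceil n^{1/(d-1)} \rceil$. In particular this captures $n = 3^{d-1}$ (via $k = 3$), which together with the preceding step closes the first displayed union. Finally, for $2^{d-1} < n \le 3^{d-1}$, I apply Lemma~\ref{lemmagd}(ii) with $k = 4$ to obtain $\delta_d(n) \in \{3n, 4n\}$, which is exactly the second assertion of the corollary.

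I do not anticipate any real obstacle: the substantive work lives in Corollary~\ref{cork24} (the direct computations in Appendix~\ref{appa}) and in Lemma~\ref{lemmagd}; the present corollary is the combinatorial consequence. The only care needed is verifying that $\lceil n^{1/(d-1)} \rceil = k$ precisely on $((k-1)^{d-1}, k^{d-1}]$, so that the three ranges supplied by Lemma~\ref{lemmagd} (with $k \in \{2, 4\}$) and the endpoint argument (for $k \in \{2, 3, 4, \ldots\}$) cover the stated sets and match the target formula. That check is routine.
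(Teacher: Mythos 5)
Your proposal is correct and follows essentially the same route as the paper: the lower bound from Corollary~\ref{corlb}, the positivity from Corollary~\ref{cork24} fed through Lemma~\ref{lemmagd}(i)--(ii) with $k\in\{2,4\}$, and the endpoint values $g_d(k^{d-1},k)=g_d(k^{d-1}-1,k)=1$ obtained from the symmetry in Theorem~\ref{gthm}(ii) together with $g_d(0,k)=g_d(1,k)=1$. You merely make explicit the pieces the paper compresses into ``the rest follows from Corollary~\ref{cork24} and Lemma~\ref{lemmagd}.''
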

\begin{proof}
Note that $\delta_d(k^{d-1}) \ge k^d$ and $\delta_d(k^{d-1} - 1) \ge k^d - k$ by Corollary~\ref{corlb}. On the other hand, $\dim \mathrm{Inv}_d(k^{d-1})_{k^d} = g_{d}(k^{d - 1}, k)  = 1$ and $\dim \mathrm{Inv}_d(k^{d-1} - 1)_{k^d - k} = g_{d}(k^{d - 1} - 1, k) = g_d(1, k) = 1$ (the last fact can be easily seen using \eqref{eq:cdnkron} and trivial characters) and so there are invariants of degrees $k^d$ and $k^d - k$. Hence $\delta_d(k^{d-1}) = k^d$ and $\delta_d(k^{d - 1} - 1) = k^d - k$. The rest follows from Corollary~\ref{cork24} and Lemma~\ref{lemmagd}.
\end{proof}

This establishes part of Theorem~\ref{thm:one}~(ii). 

\begin{corollary}\label{cor56}
Let $n > 1$ be fixed. The sequence $\delta_{3}(n) \ge \delta_5(n) \ge \cdots \ge \delta_d(n) \ge \cdots $ stabilizes to $2n$ for $d \ge 1 + \log_2 n$.
\end{corollary}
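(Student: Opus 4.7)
The plan is to assemble three ingredients already established earlier in the paper and observe that they combine immediately.

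First, the monotonicity $\delta_{d}(n) \ge \delta_{d+2}(n)$ for odd $d \ge 3$ is precisely Corollary~\ref{cormon}, so the sequence $\{\delta_d(n)\}_{d=3,5,7,\ldots}$ is weakly decreasing. Thus it suffices to identify its stable value and the threshold in $d$ at which it is attained.

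Second, I would use the lower bound from Corollary~\ref{corlb} to show that $2n$ is a floor for the whole sequence. For any fixed $n > 1$ and any odd $d \ge 3$ we have $n^{1/(d-1)} > 1$, hence $\lceil n^{1/(d-1)} \rceil \ge 2$, and therefore
\[
\delta_d(n) \;\ge\; n\lceil n^{1/(d-1)} \rceil \;\ge\; 2n.
\]
In particular, any stable value of the sequence is at least $2n$.

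Third, I would match this lower bound with the exact value $\delta_d(n) = 2n$ for $n$ in the range $1 < n \le 2^{d-1}$, which is exactly the $k=2$ instance of the Corollary immediately preceding the statement (equivalently, the positivity of $g_d(n,2)$ from Corollary~\ref{cork24} combined with Lemma~\ref{lemmagd}(i)). The condition $n \le 2^{d-1}$ is equivalent to $d \ge 1 + \log_2 n$, and for such $d$ we conclude $\delta_d(n) = 2n$. Together with the weak monotonicity and the universal lower bound, this yields the stated stabilization.

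There is no substantive obstacle here; all the work is hidden in the $k=2$ Alon--Tarsi fact $A_{d,2}$ (Proposition~\ref{at2nz}), which propagates to rectangular Kronecker positivity $B_{d,2}$ via Theorem~\ref{thm81} and thence to the degree identity $\delta_d(n)=2n$ via Lemma~\ref{lemmagd}. The only thing to verify explicitly is the elementary arithmetic equivalence between $n \le 2^{d-1}$ and $d \ge 1 + \log_2 n$, and the observation that $\lceil n^{1/(d-1)} \rceil \ge 2$ for every $n>1$.
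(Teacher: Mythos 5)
Your proof is correct and follows essentially the same route as the paper: cite Corollary~\ref{cormon} for weak monotonicity and the preceding corollary (the $k=2$ case) for $\delta_d(n)=2n$ when $1<n\le 2^{d-1}$. The extra lower-bound step via Corollary~\ref{corlb} is fine but redundant, since once $\delta_d(n)=2n$ holds for all sufficiently large odd $d$ the sequence is obviously eventually constant.
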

\begin{proof}
Monotonicity of the sequence is established in Corollary~\ref{cormon}. The stabilization limit follows from the above fact that $\delta_d(n) = 2n$ for $1 < n \le 2^{d - 1}$. 
\end{proof}

\begin{remark}\label{rmkd}
In fact, some more cases for values of $\delta_d(n)$ can be covered by noticing that if $g_{d}(n,k) > 0$ for some $n$ such that $(k-1)^{d -1} < n \le k^{d-1}$ (which is the same as $g_d(k^{d-1} - n, k) > 0$), then $\delta_{d}(n) = nk$. As shown in \cite{LZX21} for $d = 3$ this applies to $n =k^2 - 2$ even, $n = k^2 - 3$, and $n = k^2 - k$.  In \textsection\,\ref{sec:hwv0} we will show that conditionally on a $3$-dimensional analogue of the Alon--Tarsi conjecture for $k$ we get full positivity of $g_{d}(n,k)$ for $n \le k^{d-1}$, and conditionally on the Alon--Tarsi conjecture for $k$ get positivity in the smaller range $k^{d- 1}- k \le n \le k^{d-1}$. Furthermore, unconditionally for even $k$ we will prove positivity of $g_d(n,k)$ in the range $k^{d -1} - \sqrt{k}/2 + 1 \le n \le k^{d - 1}$ (Theorem~\ref{uncon}). 
\end{remark}

\section{Explicit fundamental invariants}\label{sec:fund}
Consider the $d \times k^d$  balanced table $T = (T_{i j})$ given by 
	$$
	T_{i j} = \floor*{\frac{j-1}{k^{i}}} 
	\pmod k + 1.
	$$
and let $\widetilde T$ be the $d \times (k^d - k)$ balanced table obtained by removing from $T$ the columns $(1,\ldots, 1)^T, \ldots, (k, \ldots, k)^T$.

For example, for $d = 3$ and $k = 2$ we have
$$
T = 
\begin{pmatrix}
1 1 1 1 2 2 2 2\\
1 1 2 2 1 1 2 2\\
1 2 1 2 1 2 1 2
\end{pmatrix}, \qquad 
\widetilde T = 
\begin{pmatrix}
1 1 1 2 2 2 \\
1 2 2 1 1 2 \\
2 1 2 1 2 1 
\end{pmatrix}.
$$

The table $T$ has the property that in every row each element in $[k]$ appears $k^{d-1}$ times and its $k^d$ columns are all distinct, which is a unique balanced table (up to a permutation of columns) with such a property. In addition, the columns of $T$ are ordered lexicographically.

Let us denote 
\begin{align*}
F_{d,k} := \Delta_T, \qquad \widetilde F_{d,k} := \Delta_{\widetilde T}.
\end{align*}
the corresponding  invariants. 

\begin{theorem}\label{nary}
	Let $d \ge 3$ be odd. 
	Then 
	
	(i) $F_{d,k} \in \mathrm{Inv}_d(k^{d-1})_{k^d}$ is the unique (up scale) fundamental invariant of degree $\delta_d(k^{d-1}) = k^d$.
	
	(ii) $\widetilde F_{d,k} \in \mathrm{Inv}_d(k^{d-1} - 1)_{k^d - k}$ is the unique (up scale) fundamental invariant of degree $\delta_d(k^{d-1} - 1) = k^d - k$.
\end{theorem}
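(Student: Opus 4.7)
The plan is to reduce both assertions to a one-dimensional-space computation, and then to show that the specific $\Delta_T$ (resp.\ $\Delta_{\widetilde T}$) chosen in the theorem is nonzero by exploiting combinatorial symmetries of balanced tables.

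First I would establish that $\delta_d(k^{d-1})=k^d$ and $\delta_d(k^{d-1}-1)=k^d-k$ with the corresponding invariant spaces being one-dimensional. The lower bounds $\delta_d(k^{d-1}) \ge k \cdot k^{d-1} = k^d$ and $\delta_d(k^{d-1}-1) \ge k(k^{d-1}-1) = k^d - k$ come from Corollary~\ref{corlb} (using $(k-1)^{d-1} < k^{d-1}-1$ for $k \ge 2$, so $\lceil (k^{d-1}-1)^{1/(d-1)}\rceil = k$). The symmetry in Theorem~\ref{gthm}(ii) gives $g_d(k^{d-1},k)=g_d(0,k)=1$ and $g_d(k^{d-1}-1,k)=g_d(1,k)=1$ (the latter being a trivial-character multiplicity). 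Hence $\dim\mathrm{Inv}_d(k^{d-1})_{k^d}=\dim\mathrm{Inv}_d(k^{d-1}-1)_{k^d-k}=1$, and it suffices to show that $F_{d,k}$ and $\widetilde F_{d,k}$ are nonzero: they will then automatically be the unique (up to scale) fundamental invariants at the required degrees.

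To do this I would exploit two symmetries of $\Delta_T$. By Lemma~\ref{colswap}, permuting the columns of $T$ changes $\Delta_T$ only by a sign. Secondly, replacing the entries of row $i$ of $T$ by $\pi_i$-permuted values for some $\pi_i \in S_k$ leaves the associated tuple of set partitions unchanged, since the blocks are merely relabeled; hence $\Delta_T$ is unchanged. By Corollary~\ref{identicalcol}, $\Delta_T = 0$ whenever $T$ has two equal columns; combined with Proposition~\ref{prop:span}, each one-dimensional space above is spanned by the $\Delta_T$'s coming from ``full'' balanced tables, i.e.\ those whose columns are all distinct.

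For part~(i), a full $d \times k^d$ balanced table with entries in $[k]$ and the required row-balance must use each column of $[k]^d$ exactly once; hence any two such tables differ only by a column permutation, and every corresponding $\Delta_T$ equals $\pm F_{d,k}$. Since the span is nontrivial, $F_{d,k} \ne 0$. For part~(ii), a full $d \times (k^d - k)$ balanced table uses distinct columns of $[k]^d$, and a quick count of occurrences of each value in each row forces the set $D$ of $k$ missing columns to form a transversal, i.e.\ each value in $[k]$ occurs exactly once in each row of the $d \times k$ array $D$. Every transversal is obtained from the diagonal $\{(j,\ldots,j) : j \in [k]\}$ by applying independent row permutations $\pi_1,\ldots,\pi_d \in S_k$, which by the second symmetry above do not affect $\Delta$. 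Together with column permutations this shows that every full balanced table at this degree yields $\pm \widetilde F_{d,k}$, so $\widetilde F_{d,k} \ne 0$ by the same dimension argument.

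I do not anticipate a genuine obstacle here: once the one-dimensionality of the two ambient spaces is in hand, the uniqueness of the ``shape'' of a full balanced table, modulo the column-permutation and row-relabeling symmetries, reduces both claims to the dimension computation supplied by Theorem~\ref{gthm}(ii). The most delicate combinatorial point is simply the short verification that the row-balance condition at degree $k^d - k$ forces the missing columns to form a transversal and that any two transversals are related by row relabelings.
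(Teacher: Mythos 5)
Your proposal is correct and follows essentially the same route as the paper: obtain the degree lower bounds $k^d$ and $k^d - k$ from Corollary~\ref{corlb}, combine with the one-dimensionality of the ambient spaces via the symmetry $g_d(n,k) = g_d(k^{d-1}-n,k)$ from Theorem~\ref{gthm}(ii), and then observe that for odd $d$ every spanning $\Delta_T$ must have distinct columns (Corollary~\ref{identicalcol}), so in case~(i) there is a unique such table up to column order and in case~(ii) the $k$ removed columns form a transversal that can be moved to the diagonal by the within-row value relabelings which leave $\Delta$ unchanged. This is exactly the paper's argument, with you supplying a few extra explicit details (the inequality $(k-1)^{d-1} < k^{d-1}-1$ and the trivial-character identification of $g_d(1,k)$) that the paper delegates to the preceding section.
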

\begin{proof}
The existence and uniquencess of such fundamental invariants follows from the fact that $\dim \mathrm{Inv}_d(k^{d-1})_{k^d} = g_{d}(k^{d-1}, k) = 1$ and $\dim \mathrm{Inv}_d(k^{d-1} - 1)_{k^d - k} = g_{d}(k^{d-1} - 1, k) = g_d(1, k)= 1$ shown in the previous section. (i) This invariant written as $\Delta_T$ corresponds to a $d \times k^d$ balanced table with distinct columns.  Note that there is a unique such table when we disregard the order of columns. 
(ii) A nonzero $\Delta$ polynomial from $\mathrm{Inv}_d(k^{d-1} - 1)_{k^d - k}$ must be indexed by a $d \times (k^d - k)$ table obtained from the full table $T$ by removing $k$ columns which leave the resulting table balanced, i.e. in these columns every entry from $[k]$ appears once in every row. Note that if in any row of $T$  we switch the values of $i$ and $j$ from $[k]$, the resulting $\Delta$ polynomial  will not change (by definition, as it corresponds to $\Delta_S$ with the same tuple of set partitions $S$).   Hence by performing these swap operations we may assume that the $k$ columns removed from $T$ are exactly $(1, \ldots, 1)^T, \ldots, (k, \ldots, k)^T$, and the resulting nonzero polynomial is $\Delta_{\widetilde T}$.
\end{proof}

\begin{remark}
The fundamental invariant $F_{3, k}$ was introduced in \cite{bi}, and $\tilde F_{3,k}$ (formulated via obstruction designs) was introduced in \cite{LZX21}.
\end{remark}

\section{Latin hypercubes}\label{sec:latin}

\subsection{An invariant of degree $k^2$}\label{invn2}
A \textit{Latin square} of length $k$ is an $k \times k$ matrix whose every row and column is a permutation of $[k]$. Let $\mathcal{C}_{2}(k)$ be the set of Latin squares of length $k$. The {\it sign} of a Latin square $L$ is the product of signs of permutations in rows and columns, denoted by $\sgn(L)$. The {\it Alon--Tarsi number} $AT(k)$ is the following signed sum over Latin squares:
$$
    AT(k) := \sum_{L \in \mathcal{C}_{2}(k)} \sgn(L).
$$
It is not difficult to show that $AT(k) = 0$ for odd $k$. The {Alon--Tarsi conjecture} \cite{at} states that $AT(k) \neq 0$ for even $k$. 

We denote by $I_k := (\delta_{i_1 i_2} \cdots \delta_{i_1 i_d})_{i_1,\ldots, i_d \in [k]}$ the unit tensor. 
\begin{proposition}\label{ATtable}
	Let $d \ge 3$ be odd, $n$ be even and $T$ be $d \times k^{2}$ balanced table given by
	\begin{align*}
		T = \begin{pmatrix}
			1^{k}\, 2^{k} \cdots k^{k}\\
			\cdots \\
			1^{k}\, 2^{k} \cdots k^{k}\\
			e_{k}\, e_{k} \cdots e_{k}
		\end{pmatrix}
	\end{align*}
	where we denote $j^{k} = \underbrace{j j \ldots j}_{k\textit{ times}}$ and $e_{k} = 123\ldots k$.
	Then $\Delta_{T}(I_{k}) \neq 0$ is equivalent to the Alon-Tarsi conjecture $AT(k) \ne 0$. In particular, if $AT(k) \ne 0$ then $\Delta_T \in \mathrm{Inv}_d(k)_{k^2}$ is a nonzero invariant of degree $k^2$. 
\end{proposition}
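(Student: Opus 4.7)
The plan is to unfold $\Delta_T(I_k)$ as a signed sum over Latin squares and then match that sum with $AT(k)$.

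First, I will evaluate $\Delta_T(I_k)$ from \eqref{hdetdef}. Since $(I_k)_{i_1,\ldots,i_d}$ vanishes unless all $i_\ell$ coincide, the product $\prod_i (I_k)_{\sigma_1(i),\ldots,\sigma_d(i)}$ forces $\sigma_1 = \cdots = \sigma_d =: \sigma \colon [k^2] \to [k]$. Because $T_1 = \cdots = T_{d-1}$ and $d-1$ is even, the factor $\sgn_{T_1}(\sigma)^{d-1}$ equals the indicator $[\sgn_{T_1}(\sigma) \ne 0]$, giving
$$
\Delta_T(I_k) \;=\; \sum_{\sigma:\,\sgn_{T_1}(\sigma)\ne 0} \sgn_{T_d}(\sigma).
$$

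Second, I will encode $\sigma$ as the matrix $M \in [k]^{k\times k}$ via $M_{j,c} := \sigma((j-1)k+c)$. Reading off the blocks of $T_1 = 1^k 2^k \cdots k^k$ and $T_d = e_k e_k \cdots e_k$, one checks that $\sgn_{T_1}(\sigma)$ is the product of signs of the rows of $M$, while $\sgn_{T_d}(\sigma)$ is the product of signs of the columns of $M$. The indicator forces each row of $M$ to be a permutation, and nonvanishing of $\sgn_{T_d}(\sigma)$ forces each column to be a permutation as well, so $M \in \mathcal{C}_2(k)$ is a Latin square. Writing $\mathrm{sgn}_C(L)$ for the product of column signs, this yields
$$
\Delta_T(I_k) \;=\; \sum_{L\in\mathcal{C}_2(k)} \mathrm{sgn}_C(L).
$$

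Finally, I will relate this to $AT(k) = \sum_L \sgn(L) = \sum_L \mathrm{sgn}_R(L)\,\mathrm{sgn}_C(L)$, where $\mathrm{sgn}_R(L)$ is the product of row signs. For odd $k$ both sides vanish at once: the action of $S_k$ on $\mathcal{C}_2(k)$ by simultaneous row-permutation is a bijection under which $\mathrm{sgn}_C(L)$ is multiplied by $\sgn(\pi)^k = \sgn(\pi)$, forcing $\sum_L \mathrm{sgn}_C(L) = 0$, and the same argument kills $AT(k)$. The main obstacle lies in the even-$k$ case, where one must establish an identity $\sum_L \mathrm{sgn}_C(L) = \varepsilon_k\, AT(k)$ for a nonzero scalar $\varepsilon_k \in \{\pm 1\}$. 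I plan to exploit the fact that for even $k$ both the row-permutation and value-permutation $S_k$-actions preserve $\mathrm{sgn}_R$ and $\mathrm{sgn}_C$ individually; this reduces the two sums to a common fundamental domain, where a direct sign-preserving bijection (or a parity identity relating row- and column-parities of Latin squares of even order) matches the column-sign weight with the full sign weight. Once this identity is in hand, the equivalence $\Delta_T(I_k) \ne 0 \iff AT(k) \ne 0$ is immediate, and by Proposition~\ref{prop:span} $\Delta_T \in \mathrm{Inv}_d(k)_{k^2}$ is then a nonzero invariant whenever $AT(k) \ne 0$.
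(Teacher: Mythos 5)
Your reduction of $\Delta_T(I_k)$ to the column-sign imbalance $\sum_{L \in \mathcal{C}_2(k)} \mathrm{sgn}_C(L)$ over Latin squares is exactly the paper's argument, including the observation that the $(d-1)$ repeated rows of $T$ force each row and column of your matrix $M$ to be a permutation and contribute only an indicator. The final equivalence --- that $\sum_L \mathrm{sgn}_C(L) \ne 0$ if and only if $AT(k) \ne 0$ --- is not reproved in the paper either: it is the known equivalence of the Huang--Rota conjecture with the Alon--Tarsi conjecture, which the paper cites to \cite{hr} in the remark immediately following the proposition, and you should likewise cite it rather than sketch a proof. If you did want to reprove it, note that passing to a fundamental domain under row- and value-permutations alone will not close the even-$k$ case, since the two sums carry different weights on the quotient; the essential ingredient is Janssen's parity identity $\sgn_{1}(L)\,\sgn_{2}(L)\,\mathrm{ssgn}(L) = (-1)^{\lfloor k/2\rfloor}$ \cite{jan95} combined with a conjugation of Latin squares exchanging the column and symbol sign statistics, which gives $\sum_L \mathrm{sgn}_C(L) = (-1)^{\lfloor k/2\rfloor} AT(k)$ and is precisely the ``parity identity'' you allude to parenthetically.
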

\begin{proof}
	Consider the evaluation of $\Delta_{T}$ at identity tensor. Each nonzero term in the sum \eqref{hdetdef} corresponds to a map $\sigma : [k^2] \to [k]$ such that 
	$(\sigma({1+(i-1)k}),\ldots,\sigma({ik})) \in S_{k}$ for each $i \in [k]$ (from the first $d-1$ rows of $T$), and also $(\sigma({i}), \sigma({i+k}), \ldots, \sigma({i+(k-1)k}) ) \in S_{k}$ for each $i \in [k]$ (from the last row of $T$). Then let $C = (C_{i j})$ be given by $C_{i j} = \sigma({i + (j-1)k})$. The resulting matrix is indeed a Latin square, since each column and row is a permutation. Let $r_1,\ldots, r_k$ and $c_1,\ldots, c_k$ be rows and columns of $C$, respectively; each of them is a permutation of $S_k$. We then have 
	$$
	\sgn(\sigma) = \sgn(r_1 \cdots r_k)^{d-1}\, \sgn(c_1 \cdots c_k) = \sgn(c_1 \cdots c_k)
	$$
	and so the sign corresponding to $\sigma$ in the sum is exactly the column sign of $C$, i.e. the product of signs of columns. Hence, $\Delta_{T}(I_k)$ is the sum of these signs which is known to be equivalent to the Alon--Tarsi conjecture $AT(k) \ne 0$. 
\end{proof}
\begin{remark}
	The conjecture about difference of column even and column odd Latin squares is also known as the Huang--Rota conjecture \cite{hr}, which is equivalent to the Alon--Tarsi conjecture.
\end{remark}

\begin{remark}
The Alon--Tarsi conjecture $AT(k)\ne 0$ is known to hold for $k = p \pm 1$ where $p \ge 3$ is a prime \cite{dri1, glynn}.
\end{remark}

\subsection{Magic sets, Latin cubes and signs}
	We refer to elements of the box $[k]^{d}$ as {\it cells}. 
	A {\it slice} of $[k]^d$ is a subset of all cells with fixed $\ell$-th coordinate (called {\it direction}) for some $\ell \in [d]$.
	A {\it diagonal} of $[k]^d$ is a subset of size $k$ with no two cells lying in the same slice. 
	
	A \textit{magic set} is a subset of $[k]^d$ which has an equal number of elements in every slice of $[k]^d$. We can represent a magic set $T$ as a {\it magic hypermatrix} with $1$ at cells corresponding to elements of $T$ and $0$ elsewhere, 
	which is a natural generalization of {\it magic squares}. 

\

	A {\it $d$-dimensional partial Latin hypercube} of length $k$ and cardinality $M$ is a 
	function $C : [k]^d \to \{0, 1,\ldots,n={M}/{k} \}$  satisfying the following two conditions:
  \begin{itemize}
	\item the cells $\{a \in [k]^d : C(a) \ne 0 \}$ form a magic set of cardinality $M$, and 
	\item for each slice $A$ of $[k]^d$, the nonzero values $C(a) \ne 0$ written in lexicographical order of $a \in A$,  form a permutation of $[n]$ which we call {\it $C$-permutation} of this slice. 
  \end{itemize}	
  The underlying magic set of $C$ is called its \textit{type}. 
  The set of $d$-dimensional partial Latin hypercubes of type $T$ and length $k$ is denoted by $\mathcal{C}_{d}(k,T)$. If $T = [k]^d$, then $M = nk = k^{d}$ and we call the elements of $\mathcal{C}_{d}(k,T)$ as (full) Latin hypercubes. The set of Latin hypercubes of length $k$ is denoted by $\mathcal{C}_{d}(k) = \mathcal{C}_{d}(k, [k]^d)$. See Fig.~\ref{fig1} with some examples.

\begin{definition}
  Let $C \in \mathcal{C}_{d}(k,T)$ be a partial Latin hypercube of length $k$ and type $T \subseteq [k^d]$ with $|T| = nk$. Define the following sign functions:
  \begin{itemize}
  	\item \textit{the directional sign} $\sgn_{\ell}(C)$ is the product of signs of $C$-permutations of the slices in the direction $\ell \in [d]$; 
	\item \textit{the full sign} $\sgn(C) := \sgn_{1}(C)\cdots\sgn_{d}(C)$;
	\item \textit{the symbol sign} $\mathrm{ssgn}(C)$ is the product of {\it subsigns} of each $i \in [n]$ defined as follows. Let $\mathrm{diag}_{i}(C) := \{a \in [k]^d : C(a) = i \}$. Since $i \in [n]$ is present in each slice exactly once, then $\mathrm{diag}_{i}(C)$ is a diagonal and hence can be described by permutations $\pi_2, \ldots, \pi_d$ of $[k]$ so that $\mathrm{diag}_{i}(C) = \{(j,\pi_{2}(j),\ldots,\pi_{d}(j)) : j \in [k] \}$. Then the subsign of $i$ is $\sgn(\pi_{2})\cdots\sgn(\pi_{d})$.
  \end{itemize}  
\end{definition}


\begin{figure}
	\newcounter{l}
	\newcounter{L}
	\setcounter{l}{1}
	\setcounter{L}{\value{l}*3}
	\newcommand{\drawsquare}[4] {
		\draw[-, #4] (#1,#2+\value{L},#3) -- (#1,#2,#3) -- (#1,#2,#3+\value{L});
		\foreach \y in {\value{l},2*\value{l},3*\value{l}} {
			\foreach \z in {\value{l},2*\value{l},3*\value{l}} {
				\draw[-, #4] (#1,\y-\value{l},\z) -- (#1,\y,\z) -- (#1,\y,\z-\value{l});
			}
		}
	}
	\newcommand{\fillcell}[4] {
		\draw[#4] (#1,#2,#3) -- (#1,#2+\value{l},#3) -- (#1,#2+\value{l},#3+\value{l}) -- (#1,#2,#3+\value{l}) -- cycle;
	}
	\newcommand{\putcell}[5] {
		\filldraw[#4] (#1, #2+\value{l}/2, #3+\value{l}/2) node{#5};
	}
	\begin{center}
		\centering
          \begin{minipage}{0.3\textwidth}
			\centering
			\resizebox{\textwidth}{!}{
				\begin{tikzpicture}[x=1.7cm,y=1cm,z=0.5cm,rotate around y=146,rotate around x=0]
				\draw[->, thin] (3,4,0) -- (2.3,4,0)  node[above] {$i_1$};
					\draw[->, thin] (3,4,0) -- (3,2.7,0) node[left] {$i_2$};
					\draw[->, thin] (3,4,0) -- (3,4,1.5) node[above] {$i_3$};
					\drawsquare{0}{0}{0}{color=black, thick};
					\putcell{0*\value{l}}{0*\value{l}}{0*\value{l}}{black}{2};
					\putcell{0*\value{l}}{0*\value{l}}{1*\value{l}}{black}{3};
					\putcell{0*\value{l}}{1*\value{l}}{0*\value{l}}{black}{1};
					
					\drawsquare{\value{l}}{0}{0}{color=black, thick};
					\putcell{1*\value{l}}{1*\value{l}}{1*\value{l}}{black}{2};
					\putcell{1*\value{l}}{1*\value{l}}{2*\value{l}}{black}{3};
					\putcell{1*\value{l}}{2*\value{l}}{1*\value{l}}{black}{1};
					
					\drawsquare{2*\value{l}}{0}{0}{color=black, thick};
					\putcell{2*\value{l}}{0*\value{l}}{2*\value{l}}{black}{1};
					\putcell{2*\value{l}}{2*\value{l}}{0*\value{l}}{black}{3};
					\putcell{2*\value{l}}{2*\value{l}}{2*\value{l}}{black}{2};		
				\end{tikzpicture}
			}
			\subcaption{\scriptsize A partial Latin cube $C(i_1, i_2, i_3)$ of length $k = 3$ and cardinality $M = 9$; its type (cells of nonzero values) is a magic set. } 
		\end{minipage}
		\qquad\qquad
          \begin{minipage}{0.25\textwidth}
			\centering
			\resizebox{\textwidth}{!}{
	
			\begin{tikzpicture}[x=1.7cm,y=1cm,z=0.5cm,rotate around y=146,rotate around x=-0]
				\drawsquare{0}{0}{0}{color=black, thick};
				\putcell{0*\value{l}}{0*\value{l}}{0*\value{l}}{black}{2};
				\putcell{0*\value{l}}{0*\value{l}}{1*\value{l}}{black}{3};
				\putcell{0*\value{l}}{0*\value{l}}{2*\value{l}}{black}{6};
				
				\putcell{0*\value{l}}{1*\value{l}}{0*\value{l}}{black}{1};
				\putcell{0*\value{l}}{1*\value{l}}{1*\value{l}}{black}{9};
				\putcell{0*\value{l}}{1*\value{l}}{2*\value{l}}{black}{4};
				
				\putcell{0*\value{l}}{2*\value{l}}{0*\value{l}}{black}{7};
				\putcell{0*\value{l}}{2*\value{l}}{1*\value{l}}{black}{5};
				\putcell{0*\value{l}}{2*\value{l}}{2*\value{l}}{black}{8};

				\drawsquare{\value{l}}{0}{0}{color=black, thick};
				\putcell{1*\value{l}}{0*\value{l}}{0*\value{l}}{black}{4};
				\putcell{1*\value{l}}{0*\value{l}}{1*\value{l}}{black}{7};
				\putcell{1*\value{l}}{0*\value{l}}{2*\value{l}}{black}{5};
				
				\putcell{1*\value{l}}{1*\value{l}}{0*\value{l}}{black}{8};
				\putcell{1*\value{l}}{1*\value{l}}{1*\value{l}}{black}{2};
				\putcell{1*\value{l}}{1*\value{l}}{2*\value{l}}{black}{3};
				
				\putcell{1*\value{l}}{2*\value{l}}{0*\value{l}}{black}{6};
				\putcell{1*\value{l}}{2*\value{l}}{1*\value{l}}{black}{1};
				\putcell{1*\value{l}}{2*\value{l}}{2*\value{l}}{black}{9};

				\drawsquare{2*\value{l}}{0}{0}{color=black, thick};
				\putcell{2*\value{l}}{0*\value{l}}{0*\value{l}}{black}{9};
				\putcell{2*\value{l}}{0*\value{l}}{1*\value{l}}{black}{8};
				\putcell{2*\value{l}}{0*\value{l}}{2*\value{l}}{black}{1};
				
				\putcell{2*\value{l}}{1*\value{l}}{0*\value{l}}{black}{5};
				\putcell{2*\value{l}}{1*\value{l}}{1*\value{l}}{black}{6};
				\putcell{2*\value{l}}{1*\value{l}}{2*\value{l}}{black}{7};
				
				\putcell{2*\value{l}}{2*\value{l}}{0*\value{l}}{black}{3};
				\putcell{2*\value{l}}{2*\value{l}}{1*\value{l}}{black}{4};
				\putcell{2*\value{l}}{2*\value{l}}{2*\value{l}}{black}{2};
			\end{tikzpicture}
			}
			\subcaption{\scriptsize A Latin cube in $\mathcal{C}_3(3)$.
			} 
		\end{minipage}
		\qquad 		
		\begin{minipage}{0.23\textwidth}
			\centering
			\resizebox{\textwidth}{!}{
				\begin{tikzpicture}[x=1.7cm,y=1cm,z=0.5cm,rotate around y=146,rotate around x=-0]
					\drawsquare{0}{0}{0}{color=black, thick};
					\putcell{0*\value{l}}{0*\value{l}}{0*\value{l}}{black}{$\bullet$};
					\putcell{0*\value{l}}{0*\value{l}}{1*\value{l}}{black}{$\bullet$};
					\putcell{0*\value{l}}{1*\value{l}}{0*\value{l}}{black}{$\bullet$};
					
					\drawsquare{\value{l}}{0}{0}{color=black, thick};
					\putcell{1*\value{l}}{0*\value{l}}{0*\value{l}}{black}{$\bullet$};
					\putcell{1*\value{l}}{1*\value{l}}{2*\value{l}}{black}{$\bullet$};
					\putcell{1*\value{l}}{2*\value{l}}{1*\value{l}}{black}{$\bullet$};
					
					\drawsquare{2*\value{l}}{0}{0}{color=black, thick};
					\putcell{2*\value{l}}{1*\value{l}}{2*\value{l}}{black}{$\bullet$};
					\putcell{2*\value{l}}{2*\value{l}}{1*\value{l}}{black}{$\bullet$};
					\putcell{2*\value{l}}{2*\value{l}}{2*\value{l}}{black}{$\bullet$};
				\end{tikzpicture}
			}
			\subcaption{\scriptsize A magic set which does not give rise to a partial Latin cube. } 
		\end{minipage}
	\end{center}
	\caption{Examples of Latin cubes and magic sets presented by slices in direction $1$.}\label{fig1}
\end{figure}

\begin{example}
Let us compute the signs of the partial Latin cube $C \in \mathcal{C}_{3}(3, T)$ shown on Fig.~\ref{fig1}~(A) 
which is given by 
\begin{align*}
C(1,3,3) &= C(2,1,2) = C(3,2,1) = 1, \\
C(1,1,3) &= C(2,2,2) = C(3,3,1) = 2, \\
C(1,1,1) &= C(2,2,3) = C(3,3,2) = 3,
\end{align*}
and $C(i_1, i_2, i_3) = 0$ otherwise.
Its slices in: 
\begin{itemize}
\item[] direction $1$ have 
$C$-permutations 
$(3,2,1)$, $(1,2,3)$, $(1,2,3)$ and $\sgn_1(C) = -1$, 

\item[] direction $2$ have 
$C$-permutations 
$(3,2,1)$, $(2,3,1)$, $(1,2,3)$ and $\sgn_2(C) = -1$, 

\item[] direction $3$ have 
$C$-permutations 
$(3,1,2)$, $(1,2,3)$, $(2,1,3)$ and $\sgn_3(C) = 1$,
\end{itemize}
and hence, $\sgn(C) = 1$. 
To compute the symbol sign, we have: 
\begin{itemize}
\item[] $\mathrm{diag}_1(C) = \{(1,3,3), (2, 1, 2),  (3,2,1)\}$, subsign of $1$ is $\sgn(3,1,2)\, \sgn(3,2,1) = -1$,

\item[] $\mathrm{diag}_2(C) = \{(1,1,3), (2, 2, 2),  (3,3,1)\}$, subsign of $2$ is $\sgn(1,2,3)\, \sgn(3,2,1) = -1$,

\item[] $\mathrm{diag}_3(C) = \{(1,1,1), (2, 2, 3),  (3,3,2)\}$, subsign of $3$ is $\sgn(1,2,3)\, \sgn(1,3,2) = -1$, 
\end{itemize}
and hence, $\mathrm{ssgn}(C) = -1$.
\end{example}

\begin{example}
Let us compute the signs of the Latin cube $C \in \mathcal{C}_3(3)$ shown in Fig.~\ref{fig1}~(B) which is presented by slices in direction $1$ (as matrices) given by 
\begin{align*}
C(1, \cdot, \cdot) 
= \begin{pmatrix}
3 & 4  & 2\\
5 & 6 & 7\\
9 & 8 & 1
\end{pmatrix}, 
\quad
C(2, \cdot, \cdot) 
= \begin{pmatrix}
6 & 1  & 9\\
8 & 2 & 3\\
4 & 7 & 5
\end{pmatrix}, 
\quad 
C(3, \cdot, \cdot) 
= \begin{pmatrix}
7 & 5  & 8\\
1 & 9 & 4\\
2 & 3 & 6
\end{pmatrix}.
\end{align*}
Then we have
\begin{align*}
\sgn_1(C) &= \sgn(3,4,2,5,6,7,9,8,1)\, \sgn(6,1,9,8,2,3, 4,7,5)\, \sgn(7,5,8,1,9,4,2,3,6) = -1,\\ 
\sgn_2(C) &= \sgn(3,4,2,6,1,9,7,5,8)\, \sgn(5,6,7,8,2,3,1,9,4)\, \sgn(9,8,1,4,7,5,2,3,6) = -1, \\ 
\sgn_3(C) &= \sgn(3,5,9,6,8,4,7,1,2)\, \sgn(4,6,8,1,2,7,5,9,3)\, \sgn(2,7,1,9,3,5,8,4,6) = 1, 
\end{align*}
and hence, $\sgn(C) = 1$. To compute the symbol sign we have:
\begin{itemize}
\item[] $\mathrm{diag}_1(C) = \{(1,3,3), (2, 1, 2),  (3,2,1)\}$, subsign of $1$ is $\sgn(3,1,2)\, \sgn(3,2,1) = -1$,

\item[] $\mathrm{diag}_2(C) = \{(1,1,3), (2, 2, 2),  (3,3,1)\}$, subsign of $2$ is $\sgn(1,2,3)\, \sgn(3,2,1) = -1$,

\item[] $\mathrm{diag}_3(C) = \{(1,1,1), (2, 2, 3),  (3,3,2)\}$, subsign of $3$ is $\sgn(1,2,3)\, \sgn(1,3,2) = -1$, 

\item[] $\mathrm{diag}_4(C) = \{(1,1,2), (2, 3, 1),  (3,2,3)\}$, subsign of $4$ is $\sgn(1,3,2)\, \sgn(2,1,3) = 1$,

\item[] $\mathrm{diag}_5(C) = \{(1,2,1), (2, 3, 3),  (3,1,2)\}$, subsign of $5$ is $\sgn(2,3,1)\, \sgn(1,3,2) = 1$,

\item[] $\mathrm{diag}_6(C) = \{(1,2,2), (2, 1, 1),  (3,3,3)\}$, subsign of $6$ is $\sgn(2,1,3)\, \sgn(2,1,3) = 1$, 

\item[] $\mathrm{diag}_7(C) = \{(1,2,3), (2, 3, 2),  (3,1,1)\}$, subsign of $7$ is $\sgn(2,3,1)\, \sgn(3,2,1) = 1$,

\item[] $\mathrm{diag}_8(C) = \{(1,3,2), (2, 2, 1),  (3,1,3)\}$, subsign of $8$ is $\sgn(3,2,1)\, \sgn(2,1,3) = 1$,

\item[] $\mathrm{diag}_9(C) = \{(1,3,1), (2, 1, 3),  (3,2,2)\}$, subsign of $9$ is $\sgn(3,1,2)\, \sgn(1,3,2) = -1$, 
\end{itemize}
and hence $\mathrm{ssgn}(C) = 1$. Note that 
$\sgn_1(C)^2\, \sgn_2(C)\, \sgn_3(C) \, \mathrm{ssgn}(C)  = -1$ 
which is as in Corollary~\ref{Latinsigns} below.
\end{example}

\begin{example}
It can also be noted that not every magic set gives rise to a Latin hypercube. For example, the set 
$$
T = \{(1,1,2), (1,1,3), (1,2,3), (2,1,2), (2,2,3), (2,3,1), (3,2,1), (3,3,1), (3,3,2) \}
$$
shown in Fig.~\ref{fig1}~(C) is a magic set of length $k = 3$ and cardinality $M = 9$ but it cannot be decomposed as a disjoint union of diagonals, and hence there exist no partial Latin cubes of type $T$.
Note that for $d = 2$ every magic square can be decomposed as a union of diagonals (transversals), which is (a special case of) Birkhoff's theorem.
\end{example}

\subsection{$d$-dimensional Alon--Tarsi numbers} Let us denote
\begin{align*}
	AT_d(k, T) := \sum_{C \in \mathcal{C}_d(k, T)} \sgn(C), \qquad AT_d(k) := AT_d(k, [k]^d).
\end{align*}
Note that $AT_{2}(k) = AT(k)$ is the Alon--Tarsi number.
\begin{proposition}\label{Latinidentitygen}
	Let $\Delta_{T} \in \mathrm{Inv}_d(n)_{nk}$, where $T$ is a $d\times nk$ balanced table with distinct columns ordered lexicographically. Then
	\begin{align}
		\Delta_{T}(I_{n}) 
		= AT_{d}(k,T),
	\end{align}
	where in r.h.s. $T$ is viewed as a set of its columns. 
\end{proposition}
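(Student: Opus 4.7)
The plan is to substitute $I_n$ into the defining expansion \eqref{hdetdef}. Since $(I_n)_{i_1,\ldots,i_d} = \delta_{i_1 \cdots i_d}$, only those tuples $(\sigma_1,\ldots,\sigma_d)$ satisfying $\sigma_1(i) = \cdots = \sigma_d(i)$ for every $i \in [M]$ contribute to the sum, so each contributing tuple is determined by a single map $\sigma : [M] \to [n]$ and
$$\Delta_T(I_n) \;=\; \sum_{\sigma : [M]\to[n]} \prod_{j=1}^{d} \sgn_{T_j}(\sigma),$$
where $T_j$ denotes the $j$-th row of $T$.

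Next I would identify the surviving maps $\sigma$ with partial Latin hypercubes. Because the columns of $T$ are distinct vectors in $[k]^d$, they form a subset of $[k]^d$, and the balancedness of the table (each $v \in [k]$ appearing $n$ times in each row) translates exactly to the statement that $T$, viewed as this set of columns, is a magic set of size $nk$. The $j$-th set partition of $[M]$ groups column indices by the value occurring in row $j$, so its block labelled $v \in [k]$ is the set of cells of $T$ lying in the slice $\{x \in [k]^d : x_j = v\}$. The nonvanishing requirement $\sgn_{T_j}(\sigma) \ne 0$ is therefore equivalent to the condition that the restriction of $\sigma$ to each slice of direction $j$ intersected with $T$ is a permutation of $[n]$. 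Extending $\sigma$ by $0$ off $T$ produces precisely an element $C \in \mathcal{C}_d(k,T)$, and this gives a bijection between contributing maps $\sigma$ and partial Latin hypercubes of type $T$.

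It remains to match the signs. Because the columns of $T$ are arranged in lexicographical order, the indices $a_{v,1} < \cdots < a_{v,n}$ that enumerate a block of the $j$-th row are in bijection with the cells of the corresponding slice listed in lexicographical order on $[k]^d$. Consequently $\sgn_{T_j}(\sigma)$ is exactly the product of signs of the $C$-permutations of all slices in direction $j$, which by definition is $\sgn_j(C)$. Taking the product over $j \in [d]$ yields the full sign $\sgn(C)$, and therefore
$$\Delta_T(I_n) \;=\; \sum_{C \in \mathcal{C}_d(k,T)} \sgn(C) \;=\; AT_d(k,T).$$

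The only subtle point in this plan is the role of the lexicographical ordering hypothesis. Without it, the indices $a_{v,1} < \cdots < a_{v,n}$ would refer to positions within the table rather than to the lex order of cells in $[k]^d$, and one would have to absorb an extra column permutation that could alter the overall sign. Under the given convention the two orderings coincide slice by slice, so the identification of $\sgn_{T_j}(\sigma)$ with $\sgn_j(C)$ is immediate, and the proof becomes a direct translation of the two definitions.
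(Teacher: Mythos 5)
Your proof is correct and follows essentially the same approach as the paper's: you establish a sign-preserving bijection between the maps $\sigma:[M]\to[n]$ contributing to $\Delta_T(I_n)$ and the partial Latin hypercubes $C\in\mathcal{C}_d(k,T)$ via $C(T_j)=\sigma(j)$, observing as the paper does that the lexicographical ordering hypothesis is what makes $\sgn_{T_j}(\sigma)=\sgn_j(C)$ slice by slice. Your additional explanation of why the lexicographical hypothesis is needed is a sound elaboration of a point the paper states more tersely.
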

\begin{proof}
 	Let us show a bijection between nonzero terms of $\Delta_{T}(I_n)$ and partial Latin hypercubes. Let $\sigma : [nk] \to [n]$ be a map corresponding to a nonzero term in the expansion \eqref{hdetdef} of $\Delta_{T}(I_n)$ (note that in this expansion we must have $\sigma_1 = \cdots = \sigma_d = \sigma$ as we evaluate at the unit tensor).  Let $T_{j} \in [k]^d$ be the $j$-th column of $T$ for $j \in [nk]$. 
	We construct a partial Latin hypercube $C_{\sigma}$ by setting $C_{\sigma}(T_j) = \sigma({j})$. (At all remaining cells $C$ is $0$.) One can see that the resulting function $C_{\sigma} : [k]^d \to \{0, \ldots, n\}$ is indeed a partial Latin hypercube of length $k$ and type $T$, i.e. $C_{\sigma} \in \mathcal{C}_d(k, T)$. 
	Since the columns of $T$ are ordered lexicographically, we also have $\sgn_T(\sigma) = \sgn(C_{\sigma})$. Conversely, given $C \in \mathcal{C}_d(k, T)$ we set $\sigma(j) = C(T_j)$.
	This gives a sign preserving bijection. 
\end{proof}

\begin{corollary}\label{cor:fLatin}
	For the fundamental invariant $F_{d,k}$ we have
	\begin{align}
		F_{d,k}(I_{k^{d-1}}) = \sum_{C \in \mathcal{C}_{d}(k)} \sgn(C) = AT_{d}(k).
	\end{align}
\end{corollary}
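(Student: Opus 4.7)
The plan is to obtain this as an immediate specialization of Proposition~\ref{Latinidentitygen}. The only thing requiring verification is that the balanced table $T$ defining $F_{d,k}$ in \textsection\,\ref{sec:fund} satisfies the hypotheses of that proposition and that its set of columns is all of $[k]^d$.

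First I would unwind the definition from \textsection\,\ref{sec:fund}: $F_{d,k} = \Delta_T$ where $T$ is the $d \times k^d$ balanced table given explicitly by $T_{ij} = \lfloor (j-1)/k^{i} \rfloor \pmod k + 1$. Setting $n = k^{d-1}$ gives $nk = k^d$ columns, and each row of $T$ contains every symbol of $[k]$ exactly $k^{d-1} = n$ times, so $T$ is balanced of the shape required by Proposition~\ref{Latinidentitygen}. The explicit formula produces all $k^d$ distinct $d$-tuples from $[k]^d$ as columns, listed in lexicographic order (the construction is precisely the base-$k$ enumeration). Thus the columns of $T$, viewed as a set, equal $[k]^d$.

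Applying Proposition~\ref{Latinidentitygen} with this $T$ yields
\begin{align*}
F_{d,k}(I_{k^{d-1}}) \;=\; \Delta_T(I_{k^{d-1}}) \;=\; AT_d(k, T).
\end{align*}
Since the set of columns of $T$ is $[k]^d$, the set $\mathcal{C}_d(k, T)$ of partial Latin hypercubes of type $T$ coincides with $\mathcal{C}_d(k, [k]^d) = \mathcal{C}_d(k)$, the set of full Latin hypercubes of length $k$. By definition
\begin{align*}
AT_d(k, T) \;=\; \sum_{C \in \mathcal{C}_d(k, T)} \sgn(C) \;=\; \sum_{C \in \mathcal{C}_d(k)} \sgn(C) \;=\; AT_d(k),
\end{align*}
which yields the claimed identity.

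There is no real obstacle here: the content is already packaged in Proposition~\ref{Latinidentitygen}, and the corollary is simply the observation that the table used to define $F_{d,k}$ exhausts all columns in $[k]^d$, so partial Latin hypercubes of that type are exactly full Latin hypercubes. If anything, the only care needed is to confirm from the explicit formula for $T_{ij}$ that every $d$-tuple in $[k]^d$ appears as a column exactly once and in lexicographic order, which is immediate.
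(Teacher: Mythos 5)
Your proposal is correct and follows exactly the route the paper intends: the corollary is the specialization of Proposition~\ref{Latinidentitygen} to the table $T$ defining $F_{d,k}$, whose columns are precisely the $k^d$ distinct elements of $[k]^d$ in lexicographic order, so that partial Latin hypercubes of type $T$ are exactly the full Latin hypercubes in $\mathcal{C}_d(k)$. Nothing further is needed.
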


The following statement is a generalization of Proposition~\ref{ATtable}.
\begin{proposition}\label{ATtable1}
	Let $d \ge 3$ odd, $k$ be even, and $\ell \le d$ be odd, $n=k^{\ell-1}$. Let $T$ be a $d\times k^{\ell}$ balanced table 
	whose row $i \in [d]$ denoted as $T_{i}$ is given by
	\begin{align*}
		T_{i} =  
			\begin{cases}
			    (1^{k^{\ell-i}} \ldots k^{k^{\ell-i}})^{k^{i-1}}  & \text{if } i < \ell, \\
			    (e_{k})^{k^{\ell-1}} & \text{otherwise},
			\end{cases}
	\end{align*}
	where $j^{n} = \underbrace{j j\ldots j}_{n\textit{ times}}$ and $e_{k} = 123\ldots k$. Then $\Delta_{T}(I_{n}) = AT_{\ell}(k)$. In particular, if $AT_{\ell}(k) \ne 0$ then $\Delta_T$ is a nonzero invariant of degree $k^{\ell}$.
\end{proposition}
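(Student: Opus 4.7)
The plan is to chain two reductions. First, apply Proposition~\ref{Latinidentitygen} to rewrite $\Delta_T(I_n)$ as a signed sum over $d$-dimensional partial Latin hypercubes of type $T$. Second, exhibit a sign-preserving bijection between those and full $\ell$-dimensional Latin hypercubes in $\mathcal{C}_\ell(k)$.

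For the first reduction, I would verify that the columns of $T$ are distinct and listed in lexicographic order. Writing $j - 1 = \sum_{i=1}^{\ell} a_i k^{\ell - i}$ in base $k$ with $a_i \in \{0,\ldots,k-1\}$, a direct unwinding of the definition shows that $T_{i,j} = a_i + 1$ for $i < \ell$ and $T_{i,j} = a_\ell + 1$ for $i \ge \ell$. Hence the $j$-th column of $T$ is the $d$-tuple $(a_1+1, \ldots, a_{\ell-1}+1, a_\ell+1, a_\ell+1, \ldots, a_\ell+1)$ whose last $d - \ell + 1$ entries coincide, and as $(a_1,\ldots,a_\ell)$ ranges over $[k]^\ell$ in lex order, so do these $d$-tuples. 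Proposition~\ref{Latinidentitygen} then gives $\Delta_T(I_n) = AT_d(k, T)$, viewing $T$ now as the magic subset of $[k]^d$ formed by its columns.

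For the second reduction, the key structural fact is that every cell of $T \subset [k]^d$ has its last $d - \ell + 1$ coordinates equal, giving a natural bijection $[k]^\ell \to T$ via $(a_1, \ldots, a_\ell) \mapsto (a_1, \ldots, a_{\ell-1}, a_\ell, \ldots, a_\ell)$. Pulling back along this map sends any $C \in \mathcal{C}_d(k,T)$ to a function $C': [k]^\ell \to \{0, 1, \ldots, n\}$. I would then check that (a) slices of $T$ in each direction $i \in [\ell - 1]$ correspond, preserving lex ordering, to slices of $[k]^\ell$ in the same direction, and (b) slices of $T$ in every direction $i \in \{\ell, \ldots, d\}$ all correspond to the same $\ell$-th direction slice of $[k]^\ell$, since fixing any one of the last $d - \ell + 1$ equal coordinates is tantamount to fixing them all. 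Consequently $C \in \mathcal{C}_d(k,T)$ if and only if $C' \in \mathcal{C}_\ell(k)$, and the sign factorizes as
$$\sgn(C) = \prod_{i=1}^{\ell - 1} \sgn_i(C') \cdot \sgn_\ell(C')^{d - \ell + 1} = \sgn(C'),$$
using in the last step that $d - \ell + 1$ is odd (since both $d$ and $\ell$ are odd). Summing over $C$ gives $AT_d(k, T) = AT_\ell(k)$, and combining with the first reduction yields $\Delta_T(I_n) = AT_\ell(k)$.

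The main obstacle I anticipate is the bookkeeping in (a)--(b), specifically verifying that the lex ordering of positions inside a row of $T$ matches the lex ordering of cells inside a slice of $[k]^\ell$, so that no spurious global sign is introduced when identifying $\sgn_{T_i}(\sigma)$ with $\sgn_i(C')$. The conceptual heart of the argument is the simple parity observation that $d - \ell + 1$ is odd, which allows the $d - \ell + 1$ identical sign factors coming from the repeated trailing rows of $T$ to collapse into a single $\sgn_\ell(C')$; without this parity the reduction would fail.
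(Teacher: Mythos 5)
Your proof is correct. It reaches the same conclusion by the same key observation — the parity of $d-\ell$ (equivalently, that $d-\ell+1$ is odd) allows the $d-\ell+1$ identical sign factors from the repeated trailing rows to collapse to one — but the route is packaged differently. The paper works directly at the level of the $\sigma$-expansion \eqref{hdetdef}: since at $I_n$ one has $\sigma_\ell = \cdots = \sigma_d$ and rows $\ell,\ldots,d$ of $T$ are identical, the sign factors as $\sgn_T(\sigma) = \sgn_H(\sigma')\,\sgn_{T_\ell}(\sigma_\ell)^{d-\ell}$ with $H$ the first $\ell$ rows of $T$; the last factor drops because $d-\ell$ is even, $H$ is precisely the table of $F_{\ell,k}$, and Corollary~\ref{cor:fLatin} finishes. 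You instead first invoke Proposition~\ref{Latinidentitygen} to get $\Delta_T(I_n) = AT_d(k,T)$ and then build a sign-preserving bijection $\mathcal{C}_d(k,T) \to \mathcal{C}_\ell(k)$ by collapsing the last $d-\ell+1$ identical coordinates. The two reductions commute: the paper shaves rows first, then passes to Latin hypercubes, while you pass to (partial) Latin hypercubes first, then reduce dimension. Your version makes the geometric picture — $T$ sits as a ``diagonal'' lift of $[k]^\ell$ into $[k]^d$ — more transparent, at the cost of an extra check that the identification of slices preserves the lexicographic order used to read off $C$-permutations (which you correctly flag and which does hold, since appending equal trailing coordinates does not disturb lex order). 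The paper's version avoids the Latin-hypercube bookkeeping entirely by recognizing $H$ as the full table for $F_{\ell,k}$ and citing the already-established Corollary~\ref{cor:fLatin}.
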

\begin{proof}
	Let $\sigma=(\sigma_{1},\ldots,\sigma_{d})$ be a $d$-tuple of maps $[k^{\ell}] \to [k]$ corresponding to a nonzero term in the expansion \eqref{hdetdef} of $\Delta_{T}(I_{n})$. Note that we must have $\sigma_{\ell}=\ldots=\sigma_{d}$. Let $H$ be a balanced table formed with the first $\ell$ rows of $T$ and $\sigma'=(\sigma_{1},\ldots,\sigma_{\ell})$, and $H'$ be the table of the remaining $d - \ell$ rows. Note that $H$ is a table of all possible $[k]^{\ell}$ columns and hence $\Delta_H = F_{\ell, k}$. Then we have  
	$$\sgn_{T}(\sigma) = \sgn_{H}(\sigma')\, \sgn_{H'_{}}(\sigma_{\ell})^{d-\ell} = \sgn_{H}(\sigma').$$ 
	Hence there is a  sign-preserving bijection  between nonzero terms of the sum in the expansion of $\Delta_{T}(I_{n})$ and nonzero terms in the expansion of $\Delta_{H}(I_{n})$. By Corollary~\ref{cor:fLatin} we have $\Delta_{T}(I_{n}) = \Delta_{H}(I_{n}) = AT_{\ell}(k)$.
\end{proof}

\begin{lemma}\label{sgnchange}
	Let $C$ be a $d$-dimensional partial Latin hypercube of length $k$ and cardinality $nk$. Let $C'$ be a partial Latin hypercube obtained from $C$ by exchanging its values $i\neq j$ from $[n]$. Then $$\sgn(C') = (-1)^{dk}\sgn(C).$$
\end{lemma}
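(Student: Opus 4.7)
The plan is to track how each directional sign changes separately, and then multiply. Fix a direction $\ell \in [d]$ and a slice $A$ of $[k]^d$ in direction $\ell$. By the definition of a partial Latin hypercube, the nonzero values of $C$ on $A$, listed in lexicographic order of the cells, form a permutation $\pi_A \in S_n$, namely the $C$-permutation of the slice. Since $\pi_A$ is a bijection onto $[n]$, each of the symbols $i$ and $j$ occurs exactly once in the one-line notation of $\pi_A$.

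Next I would observe that passing from $C$ to $C'$ swaps \emph{values} $i$ and $j$ globally, so the $C'$-permutation of the same slice $A$ is obtained from $\pi_A$ by replacing its unique entry equal to $i$ by $j$ and its unique entry equal to $j$ by $i$. In permutation terms this is exactly $\tau_{ij} \circ \pi_A$, where $\tau_{ij} \in S_n$ is the transposition swapping $i$ and $j$. Hence $\sgn(\pi'_A) = -\sgn(\pi_A)$, so each slice contributes one factor of $-1$ to the change of the directional sign.

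Finally I would count slices. Direction $\ell$ has exactly $k$ slices, so $\sgn_\ell(C') = (-1)^k \sgn_\ell(C)$, and multiplying over the $d$ directions gives
\begin{equation*}
\sgn(C') \;=\; \prod_{\ell=1}^{d} \sgn_\ell(C') \;=\; \prod_{\ell=1}^{d} (-1)^k \sgn_\ell(C) \;=\; (-1)^{dk}\sgn(C),
\end{equation*}
which is the claimed identity. The argument is essentially transparent once the lexicographic-order convention is fixed; the only point that requires the Latin hypothesis (rather than just magic) is that each of $i$ and $j$ appears exactly once per slice, so the swap on values is a single transposition rather than some longer product. There is no real obstacle beyond making this observation carefully.
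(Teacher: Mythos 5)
Your proof is correct and takes essentially the same approach as the paper's (which simply notes that the exchange induces a single transposition in each of the $dk$ slices); you have merely organized the count by direction and spelled out the composition with $\tau_{ij}$ in more detail.
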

\begin{proof}
	The exchange results in a single transposition in each slice, and since there are $dk$ slices, the proposition follows.
\end{proof}

\begin{corollary}\label{cor:zeroev}
	Let $d$ and $k$ be odd and $n > 1$. For any $\Delta_{T} \in \mathrm{Inv}_d(n)_{kn}$ we have 
	$
	\Delta_{T}(I_{n}) = 0.
	$
	In particular, for any invariant $F \in \mathrm{Inv}_d(n)_{kn}$ we  have $F(I_n) = 0$.
\end{corollary}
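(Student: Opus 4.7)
The plan is to apply Proposition~\ref{Latinidentitygen} to reformulate $\Delta_T(I_n)$ as a signed sum over partial Latin hypercubes and then exhibit a fixed-point-free sign-reversing involution on this set.

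First I would reduce to the case where $T$ has distinct columns ordered lexicographically. Since $d$ is odd, Corollary~\ref{identicalcol} gives $\Delta_T \equiv 0$ whenever $T$ has two equal columns, so we may assume all columns are distinct; by the remark following Corollary~\ref{identicalcol} we may further reorder them lexicographically without changing $\Delta_T$. Proposition~\ref{Latinidentitygen} then yields
\[
\Delta_T(I_n) \;=\; AT_d(k, T) \;=\; \sum_{C \in \mathcal{C}_d(k,T)} \sgn(C).
\]

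Next I would construct the involution. Since $n > 1$, we may fix two distinct values $i, j \in [n]$ and define $\iota : \mathcal{C}_d(k,T) \to \mathcal{C}_d(k,T)$ by sending $C$ to the partial Latin hypercube $C'$ obtained from $C$ by exchanging the symbols $i$ and $j$. This is clearly an involution, and it preserves the type $T$ of the underlying magic set. Moreover, since every symbol in $[n]$ appears in every slice (in particular both $i$ and $j$ do), we have $\iota(C) \neq C$, so $\iota$ is fixed-point-free. By Lemma~\ref{sgnchange},
\[
\sgn(\iota(C)) \;=\; (-1)^{dk}\,\sgn(C) \;=\; -\sgn(C),
\]
because $d$ and $k$ are both odd, making $dk$ odd. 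Hence the contributions of $C$ and $\iota(C)$ cancel pairwise, and the signed sum vanishes:
\[
\Delta_T(I_n) \;=\; AT_d(k,T) \;=\; 0.
\]

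For the ``in particular'' statement, I would invoke Proposition~\ref{prop:span}, which says that the $\Delta_T$ polynomials span $\mathrm{Inv}_d(n)_{kn}$. Any $F \in \mathrm{Inv}_d(n)_{kn}$ is therefore a linear combination of such $\Delta_T$'s, each of which vanishes at $I_n$ by the above, so $F(I_n) = 0$. There is no real obstacle in this argument: the only ingredient needed beyond the cited lemmas is the parity observation that odd $\times$ odd is odd, which is what turns the sign change in Lemma~\ref{sgnchange} into a genuine cancellation.
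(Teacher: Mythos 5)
Your proof is correct and follows essentially the same route as the paper's: the paper's proof is simply the one-liner that the exchange of two values defines a sign-reversing involution on $\mathcal{C}_d(k,T)$ via Lemma~\ref{sgnchange}, and all terms cancel. You spell out the reduction to distinct lexicographically ordered columns (via Corollary~\ref{identicalcol} and the remark after it), the passage through Proposition~\ref{Latinidentitygen}, fixed-point-freeness, and the linear-span argument for the ``in particular'' clause; the only tiny imprecision is the claim that reordering columns lexicographically leaves $\Delta_T$ unchanged — by Lemma~\ref{colswap} it may change by a sign, which of course does not affect whether its evaluation at $I_n$ vanishes.
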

\begin{proof}
	By Lemma~\ref{sgnchange} the exchange of two values (say $1$ and $2$) defines the sign reversing involution on $\mathcal{C}_{d}({k,T})$, and hence all terms cancel out. 
\end{proof}

\begin{remark}\label{remstab}
On the other hand, it is known that there is an invariant $\Delta_T$ such that $\Delta_T(I_n) \ne 0$ since the unit tensor $I_n$ is {\it semistable} for the action of $G$, cf. \cite{widg}. Hence its degree must be $kn$ for some even $k \ge \delta_d(n)/n$. 
\end{remark}

In particular, we have $AT_d(k) = 0$ for odd $k$. Let us show that for $k = 2$ it is nonzero.

\begin{proposition}\label{at2nz}
	For all $d \ge 2$, we have $AT_{d}(2) > 0$. 
\end{proposition}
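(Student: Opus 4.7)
The plan is to prove the stronger identity $AT_d(2) = (2^{d-1})!$, which immediately gives positivity. First I would characterize $\mathcal{C}_d(2)$: since $k=2$ every diagonal of $[2]^d$ consists of two cells differing in every coordinate, i.e.\ an antipodal pair $\{a,\bar a\}$ with $\bar a_i = 3-a_i$, and $[2]^d$ is uniquely the disjoint union of $2^{d-1}$ such pairs. Each such pair meets every slice in exactly one cell, so labellings of the pairs by $[2^{d-1}]$ biject with $\mathcal{C}_d(2)$, giving $|\mathcal{C}_d(2)| = (2^{d-1})!$.

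Next I would apply Lemma~\ref{sgnchange} with $k=2$: exchanging any two symbols changes $\sgn$ by $(-1)^{2d} = +1$. Since transpositions generate $S_{2^{d-1}}$, every Latin hypercube in $\mathcal{C}_d(2)$ carries the same sign and $AT_d(2) = (2^{d-1})! \cdot \sgn(C_0)$ for any specific $C_0 \in \mathcal{C}_d(2)$.

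The heart of the argument is to exhibit such a $C_0$ with $\sgn(C_0) = +1$. Take the lex bijection $\phi(y_2,\ldots,y_d) = 1 + \sum_{i=2}^d (y_i-1)2^{d-i}$, which satisfies $\phi(\bar y) = 2^{d-1}+1-\phi(y)$, and set $C_0(a) := \phi(a_2,\ldots,a_d)$ if $a_1 = 1$ and $\phi(\bar a_2,\ldots,\bar a_d)$ if $a_1 = 2$. Writing $\pi_j^s \in S_{2^{d-1}}$ for the $C_0$-permutation of slice $s$ in direction $j$, a direct lex-index computation gives $\pi_j^2 = \mu_j \circ \pi_j^1$, where $\mu_j$ is the bit-flip involution on $[2^{d-1}]$ flipping bit $d-j$ of $v-1$ (for $j \ge 2$) or the full reversal $v \mapsto 2^{d-1}+1-v$ (for $j=1$). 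In every case $\mu_j$ is a product of $2^{d-2}$ disjoint transpositions, so $\sgn_j(C_0) = \sgn(\pi_j^1)\sgn(\pi_j^2) = \sgn(\mu_j)\sgn(\pi_j^1)^2 = (-1)^{2^{d-2}}$, and hence
$$\sgn(C_0) = \left((-1)^{2^{d-2}}\right)^d = (-1)^{d \cdot 2^{d-2}} = +1,$$
since $d \cdot 2^{d-2}$ is even for every $d \ge 2$ (it equals $2$ at $d=2$, and $2^{d-2}$ is already even for $d \ge 3$). Combining the three steps yields $AT_d(2) = (2^{d-1})! > 0$.

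The main technical obstacle is verifying $\pi_j^2 = \mu_j \circ \pi_j^1$ for $j \ge 2$: one splits each slice into its two halves indexed by $a_1 \in \{1,2\}$, writes each entry in closed form via $\phi$, and uses the antipodal identity $\phi(\bar y) = 2^{d-1}+1-\phi(y)$ to see that on the first half the relation becomes $\pi_j^2(l) = \pi_j^1(l) + 2^{d-j}$ and on the second half $\pi_j^2(l) = \pi_j^1(l) - 2^{d-j}$, which is exactly the action of $\mu_j$ throughout.
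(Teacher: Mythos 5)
Your proof is correct, and it also yields the stronger statement $AT_d(2) = (2^{d-1})!$, which the paper does not state explicitly but which follows from its argument as well. The route is different, though: you first invoke Lemma~\ref{sgnchange} (with $k=2$, so symbol swaps preserve $\sgn$) together with the bijection $\mathcal{C}_d(2) \leftrightarrow S_{2^{d-1}}$ via labelings of antipodal pairs to conclude that every Latin hypercube has the same sign, and then you build a specific $C_0$ and verify $\sgn(C_0)=+1$ by an explicit bit-flip analysis of slice permutations. The paper instead exploits the same antipodal observation $C(a)=C(\bar a)$ at the level of an arbitrary $C$: for any direction $\ell$ the two parallel slices are lexicographic reverses of one another, $\pi_2 = \pi_1 \circ w_0$ with $w_0$ the reversal of $[2^{d-1}]$, so $\sgn_\ell(C)=\sgn(w_0)$ universally and $\sgn(C)=\sgn(w_0)^d = (-1)^{d\, 2^{d-2}}=1$ in one stroke, with no need for Lemma~\ref{sgnchange} or the explicit construction of $C_0$. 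What your approach buys is a cleaner separation into ``all signs agree'' plus ``one sign is $+1$'' (which can be a useful template when the universal slice-reversal structure is missing), at the cost of a somewhat heavier explicit calculation; the two steps $\pi_j^2=\mu_j\circ\pi_j^1$ for the bit-flip $\mu_j$ and $\pi_j^2=\pi_j^1\circ w_0$ from the paper are of course compatible, as both give $\sgn(\pi_j^2)=(-1)^{2^{d-2}}\sgn(\pi_j^1)$. If you notice that the reversal relation holds for every $C$, not just for your $C_0$, your step~2 becomes redundant and step~3 collapses to the paper's one-line computation.
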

\begin{proof}
	Let $C \in \mathcal{C}_{d}({2})$. Note that for each $a \in [2]^{d}$ we must have $C(a) = C(\bar a)$, where $\bar a := (3,\ldots,3) - a$. Thus any two parallel slices contain permutations reverse to each other, namely, if $\pi_{1}$ is a permutation (collected lexicographically) in the slice $1$ in the direction $i \in [d]$ then the permutation $\pi_{2}$ from the unique parallel slice $2$ (also collected lexicographically) satisfies $\pi_{2}= w_{0} \circ \pi_{1}$, where $w_{0}$ is the reverse of the identity permutation of length $2^{d-1}$. Hence, 
	$$\sgn_{i}(C) = \sgn(\pi_{1})\, \sgn(\pi_{2}) = \sgn(w_{0}), $$  
	$$\sgn(C) = \sgn_{1}(C)\cdots\sgn_{d}(C) = \sgn(w_{0})^{d} = (-1)^{2^{d-2} d} = 1$$ for all $d\ge 2$ and the result follows.
\end{proof}
\begin{remark}
In \cite{bi} it is also noted that computations show $AT_3(4) \ne 0$.
\end{remark}

\subsection{Relation between signs} 
For a sequence $a = (a_1,\ldots,a_n) \in \mathbb{Z}^n$ let 
$$\mathrm{inv}(a) := |\{ (i,j): a_i > a_j,\, 1 \le i<j\le n \}|, \qquad \mathrm{msgn}(a) := (-1)^{\mathrm{inv}(a)}.$$ 
be the number of inversions and the {\it multi-sign} of $a$. 
When $a \in S_n$ we have $\mathrm{msgn}(a) = \sgn(a)$.
  
Let $T=\left\{x_1, \ldots, x_{nk}\right\} \subseteq [k]^d$ be a magic set with the cells $x_1 <\ldots < x_{nk} \in [k]^d$ ordered lexicographically. Let $x_i = ( x^{(1)}_i, \ldots, x^{(d)}_i )$ for $i \in [nk]$ and denote $T_\ell = ( x^{(\ell)}_1,\ldots,x^{(\ell)}_{nk} )$ for $\ell \in [d]$. 
Then we also define the sign of a magic set as follows: 
$$
	\sgn(T) := \mathrm{msgn}(T_1)\cdots\mathrm{msgn}(T_d).
$$

Now we show that the signs of Latin hypercubes defined earlier are related as follows.  This result will be useful in the next section.

\begin{theorem}\label{pLatinsigns}
	Let $d > 2$ and $T \subseteq [k]^d$ be a magic set. For every partial Latin hypercube $C \in \mathcal{C}_d(k,T)$ we have
	$$
		\sgn_{1}(C)^{d-1} \sgn_{2}(C) \cdots \sgn_{d}(C)\, \mathrm{ssgn}(C) = \sgn(T).
	$$
\end{theorem}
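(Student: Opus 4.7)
The plan is to reduce the claim to a pairwise identity for each direction $\ell \in \{2,\ldots,d\}$ and then prove that identity by comparing four natural total orderings on $T$. First, note that since $x_1 < \cdots < x_{nk}$ is lexicographic, the sequence $T_1$ is weakly increasing, so $\mathrm{msgn}(T_1)=1$ and $\sgn(T)=\prod_{\ell=2}^d \mathrm{msgn}(T_\ell)$. I also split the symbol sign as $\mathrm{ssgn}(C)=\prod_{\ell=2}^d \mathrm{ssgn}_\ell(C)$, where $\mathrm{ssgn}_\ell(C):=\prod_{i=1}^n \sgn(\pi_\ell^i)$. The theorem then follows by multiplying over $\ell=2,\ldots,d$ the identity
\begin{equation}\label{eq:pairsign}
\sgn_1(C)\,\sgn_\ell(C)\,\mathrm{ssgn}_\ell(C) \;=\; \mathrm{msgn}(T_\ell),
\end{equation}
since the left side of the theorem is exactly the product of the left sides of \eqref{eq:pairsign} (with the $d-1$ copies of $\sgn_1(C)$ producing the power $\sgn_1(C)^{d-1}$).

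To prove \eqref{eq:pairsign} for a fixed $\ell$, I introduce four total orderings on the $nk$ cells of $T$: (A) lex on $(x^{(1)},\ldots,x^{(d)})$, i.e.\ the given one; (B) lex on $(x^{(\ell)}, x^{(1)}, \ldots, \widehat{x^{(\ell)}}, \ldots, x^{(d)})$; (A') lex on $(x^{(1)}, \sigma(x))$, where $\sigma(x):=C(x)$; and (B') lex on $(x^{(\ell)}, \sigma(x))$. Orderings A' and B' are well-defined because every direction-$1$ (resp.\ direction-$\ell$) slice meets each diagonal $\mathrm{diag}_i(C)$ in exactly one cell, so $(x^{(1)},\sigma)$ and $(x^{(\ell)},\sigma)$ are injective on $T$. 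For any orderings $P,Q$ let $\tau_{PQ}\in S_{nk}$ be the permutation sending each cell's $P$-position to its $Q$-position; the sign is multiplicative, $\sgn(\tau_{PR})=\sgn(\tau_{PQ})\sgn(\tau_{QR})$.

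Next I compute each of the four relevant signs. Orderings A and A' both group cells by $x^{(1)}$, and within each direction-$1$ slice the permutation from A (lex on $(x^{(2)},\ldots,x^{(d)})$) to A' (by $\sigma$) is precisely the $C$-permutation of that slice; multiplying over the $k$ slices gives $\sgn(\tau_{AA'})=\sgn_1(C)$, and the same argument on direction-$\ell$ slices gives $\sgn(\tau_{BB'})=\sgn_\ell(C)$. A position in A' is indexed by $(v,i)\in[k]\times[n]$ with $v=x^{(1)}$ and $i=\sigma$; the corresponding cell $(v,\pi_2^i(v),\ldots,\pi_d^i(v))$ sits at B'-position $(\pi_\ell^i(v),i)$, so on each $k$-element block of A'-positions with fixed $i$ the map $\tau_{A'B'}$ acts as $\pi_\ell^i$, yielding $\sgn(\tau_{A'B'})=\prod_i \sgn(\pi_\ell^i)=\mathrm{ssgn}_\ell(C)$. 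Finally, going directly from A to B is a stable sort of the word $T_\ell$ by $x^{(\ell)}$: within cells sharing a common $x^{(\ell)}$-value, both orderings use the same lex comparison on the remaining coordinates (the sequence of indices other than $\ell$ appears in the same relative order in both A- and B-comparison tuples), so relative order is preserved there, while for $x_i^{(\ell)}\ne x_j^{(\ell)}$ ($i<j$ in A) the order flips in B iff $x_i^{(\ell)}>x_j^{(\ell)}$. Hence the inversions of $\tau_{AB}$ are precisely the inversions of $T_\ell$, giving $\sgn(\tau_{AB})=(-1)^{\mathrm{inv}(T_\ell)}=\mathrm{msgn}(T_\ell)$. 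Composing $\tau_{AB}=\tau_{B'B}\circ\tau_{A'B'}\circ\tau_{AA'}$ and using $\sgn(\tau_{B'B})=\sgn(\tau_{BB'})$ then yields \eqref{eq:pairsign}.

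The only delicate step is the last one: verifying that on cells sharing an $x^{(\ell)}$-value the A- and B-orders agree, so that $\tau_{AB}$ really is the stable-sort permutation of $T_\ell$ and has sign $\mathrm{msgn}(T_\ell)$. Everything else is a routine application of multiplicativity of the sign.
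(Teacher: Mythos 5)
Your proof is correct, and it takes a genuinely different route from the paper's. The paper works mod 2 throughout, encoding each sign as a sum of indicator functions $A_\ell$ and $B$ over pairs of cells and then performing a fairly lengthy Boolean-algebra manipulation (using identities like $F(=)+F(<)+F(>)=F(\cdot)$) to massage $\sum_\ell A_\ell + B$ into $\sum_\ell\mathrm{inv}(T_\ell)$. You instead reduce to the pairwise identity $\sgn_1(C)\sgn_\ell(C)\,\mathrm{ssgn}_\ell(C)=\mathrm{msgn}(T_\ell)$ and realize it as a factorization of the transition permutation between the $x^{(1)}$-major and $x^{(\ell)}$-major lexicographic orderings on $T$: the legs $A\to A'$ and $B\to B'$ give $\sgn_1(C)$ and $\sgn_\ell(C)$ (sorting each slice by its $C$-permutation), the leg $A'\to B'$ gives $\mathrm{ssgn}_\ell(C)$ (the bijection $(v,i)\mapsto(\pi_\ell^i(v),i)$ has sign $\prod_i\sgn(\pi_\ell^i)$, as one sees by conjugating with the $k\times n$ transposition permutation, since the sign of an abstract permutation is independent of the chosen labeling), and the direct leg $A\to B$ is the stable sort of $T_\ell$, with sign $\mathrm{msgn}(T_\ell)$. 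All the individual checks you flag — injectivity of $(x^{(\ell)},\sigma)$ on $T$, agreement of A- and B-order among cells sharing $x^{(\ell)}$ (both comparisons drop a fixed coordinate), and the inversion count for a stable sort — hold. Your approach is more structural and exposes the mechanism (a telescoping product of relabeling signs), whereas the paper's is a self-contained but opaque mod-2 computation; both are legitimate, and yours is arguably easier to audit.
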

\begin{proof}
In this proof we shall consider all sums and equalities mod 2. 
We use the notation 
$$[A] := 
\begin{cases} 
1, & {A = true},\\
0, & {A = false}.
\end{cases}
$$ 
We express signs as $\sgn(\sigma) = (-1)^{\text{\# inversions of } \sigma}$ for a permutation $\sigma$. 

For all $a = (a_1,\ldots, a_d), b = (b_1,\ldots, b_d) \in [k]^d$,
define the functions $A_{\ell}: (a,b) \mapsto \{0,1\}$ for $\ell \in [d]$  and $B:(a,b) \mapsto \{0,1\}$ given by
	\begin{align*}
		A_{\ell}(a, b) & := [a_{\ell} = b_{\ell}]\, [a < b]\, [C(a) > C(b)], \\
		B(a,b) & := [a_{1} < b_{1}]\, [C(a) = C(b)]\sum_{i=2}^{d}~  [a_{i} > b_{i}].
	\end{align*}
	where the relation $a < b$ corresponds to lexicographical order. Note that $A_{\ell}(a,b)$ expresses if the cells $a$ and $b$  lie in the same slice in the $\ell$-th direction and form an inversion of a $C$-permutation. Similarly, $B$ indicates contributions of the symbol sign. 
	Then we can rewrite the sign functions as follows:
	\begin{align}\label{signfunc}
		\sgn_{\ell}(C) = (-1)^{\sum_{a,b\in T} A_{\ell}(a, b)} \quad \text{and}\quad
		\mathrm{ssgn}(C) = (-1)^{\sum_{a,b\in T} B(a,b)}.
	\end{align}	
	
	Define the indicators ${\chi}_{i}(\ast): (a, b) \mapsto \{0,1\}$ 
	given by ${\chi}_{i}(\ast)(a,b) = [a_i \ast b_i]$, 
	where $\ast$ can be one of the binary relations $\{ <, =, >, \cdot \}$ and $\cdot$ denotes the complete relation (identical $1$). Similarly, we define $\xi(\ast): (a,b) \mapsto \{0,1\}$ given by $\xi(\ast)(a,b) = [C(a)\ast C(b)]$. 
	We will use that fact that our functions $F(\ast) \in \{0,1\}$ satisfy the identity
	\begin{align}\label{eqidentity}
		F(=) + F(<) + F(>) &= F(\cdot).
	\end{align}
	Denote $\chi_{i_{1}\ldots i_{\ell}}(\ast) := \chi_{i_{1}}(\ast)\ldots \chi_{i_{\ell}}(\ast)$. By exchanging the arguments $a$ and $b$ we can see that
	\begin{align}\label{revidentity}
		\chi_{i_{1}\ldots i_{\ell}}(<)\,
		\chi_{j_{1}\ldots j_{m}}(>)\,
		\chi_{k_{1}\ldots k_{s}}(=)\,
		\xi(<)
		=
		\chi_{i_{1}\ldots i_{\ell}}(>)\,
		\chi_{j_{1}\ldots j_{m}}(<)\,
		\chi_{k_{1}\ldots k_{s}}(=)\,
		\xi(>)
	\end{align}
	for any pairwise distinct indices $i_{1},\ldots,i_{\ell},j_{1},\ldots,j_{m},k_{1},\ldots,k_{s} \in [d]$. 
	  Then the above functions $A_{\ell}$ and $B$ can be rewritten as follows:
	\begin{align*}
	A_{\ell} &= \left(\sum_{i=1,i\neq \ell}^{d} \chi_{\ell}(=) \chi_{1}(=)\ldots\chi_{i-1}(=)\chi_{i}(<) \right) \xi(>),\\
		B &= \sum_{i=2}^{d} \chi_{1}(<)\chi_{i}(>)\xi(=) \\
		&= \sum_{i=2}^{d} \chi_{1}(<)\chi_{i}(>) (1 + \xi(<) + \xi(>))\\
		&= \sum_{i=2}^{d} \chi_{1}(<)\chi_{i}(>) + \left(\sum_{i=2}^{d} \chi_{1}(<)\chi_{i}(>) + \chi_{1}(>)\chi_{i}(<)\right)\xi(>).\\
	\end{align*}
	Denote $r = [d \text{ is odd}]$. Using the fact $\chi_{\ell}(=)\chi_{\ell}(<) = 0$ we obtain
	\begin{align*}
		\sum_{\ell=1+r}^{d} A_{\ell} 
		&= \sum_{\ell=1+r}^{d} \left(\sum_{i=1,i\neq \ell}^{d} \chi_{\ell}(=) \chi_{1\ldots i-1}(=)\chi_{i}(<) \right) \xi(>)\\
		&= \sum_{\ell=1+r}^{d} \left(\sum_{i=1}^{d} \chi_{\ell}(=) \chi_{1\ldots i-1}(=)\chi_{i}(<) \right) \xi(>)\\
		&= \left(
		\underbrace{(d-r)\sum_{i=1}^{d}  \chi_{1\ldots i-1}(=)\chi_{i}(<)}_{=0 \text{ as $(d-r)$ is even}} + \sum_{\ell=1+r}^{d}\sum_{i=1}^{d} \chi_{1\ldots i-1}(=)\chi_{i}(<) (\chi_{\ell}(<) + \chi_{\ell}(>))
			\right) \xi(>).
	\end{align*}
	Further collecting the terms with the multiple $\chi_{1}(=)$ separately we get
	\begin{align*}
		\sum_{\ell=1+r}^{d} A_{\ell} 
			&= \left(
			\sum_{\ell=2}^{d}\chi_{1}(<)(\chi_{\ell}(<)+\chi_{\ell}(>)) + (1-r)\chi_{1}(<)\right)\xi(>)\\
			&+\chi_{1}(=)\left(
		\underbrace{\sum_{i=2}^{d}\sum_{\ell=i+1}^{d} \chi_{2\ldots i-1}(=)\chi_{i}(<) (\chi_{\ell}(<) + \chi_{\ell}(>))}_{=: L} 
		+ \underbrace{\sum_{i=2}^{d}\chi_{2\ldots i-1}(=)\chi_{i}(<)}_{=: R}
			\right) \xi(>).
	\end{align*}
	Let us collect the terms in $R$ of the form $\chi_i(=)$ using the identity \eqref{eqidentity} consequently from right to left until the first non-dot relation appears, so that we can rewrite
	$$
		R = \sum_{i=2}^{d}\chi_{2\ldots i-1}(=)\chi_{i}(<) 
		= \sum_{i=2}^{d} \sum_{\ell=2}^{i-1}\chi_{2\ldots \ell-1}(=)(\chi_\ell(<) + \chi_\ell(>))\chi_{i}(<) + \sum_{i=2}^d \chi_i(<)
	$$
	and therefore by identity \eqref{revidentity} we have
	\begin{align*}
		\chi_1(=)(L + R - \sum_{i=2}^d \chi_i(<))\xi(>) &= \sum_{i=2}^{d} \sum_{\ell=i+1}^{d}\chi_{1\ldots i-1}(=)\left(
		\chi_i(<)\chi_\ell(>) + \chi_i(>)\chi_\ell(<)
		\right) \xi(>) \\
		&= \sum_{i=2}^{d} \sum_{\ell=i+1}^{d}\chi_{1\ldots i-1}(=)\chi_i(<)\chi_\ell(>) \left(
		\xi(<) + \xi(>)\right) \\
		&= \sum_{i=2}^{d} \sum_{\ell=i+1}^{d}\chi_{1\ldots i-1}(=)\chi_i(<)\chi_\ell(>) 
	\end{align*}
	where the latter equality is due to the fact $\chi_1(=)\xi(=) = 0$. Thus,
	\begin{align*}
		\sum_{\ell=1+r}^{d} A_{\ell} + B
			=& \left(
			\sum_{\ell=2}^{d}\chi_{1}(<)(\chi_{\ell}(<)+\chi_{\ell}(>)) + (1-r)\chi_{1}(<)\right)\xi(>) \\
			&+ \sum_{i=2}^d \chi_1(=)\chi_i(<)\xi(>)
			+ \sum_{i=2}^{d} \sum_{\ell=i+1}^{d}\chi_{1\ldots i-1}(=)\chi_i(<)\chi_\ell(>) \\
			&+ \sum_{i=2}^{d} \chi_{1}(<)\chi_{i}(>) + \left(\sum_{\ell=2}^{d} \chi_{1}(<)\chi_{\ell}(>) + \chi_{1}(>)\chi_{\ell}(<)\right)\xi(>) \\ 
			=& 
			\sum_{\ell=2}^{d}(\chi_{1}(<) + \chi_{1}(=) + \chi_{1}(>))\chi_{\ell}(<) \xi(>) + (1-r)\chi_{1}(<)\xi(>) \\
			&+ \sum_{i=2}^{d} \sum_{\ell=i+1}^{d}\chi_{1\ldots i-1}(=)\chi_i(<)\chi_\ell(>) + \sum_{i=2}^{d} \chi_{1}(<)\chi_{i}(>) \\ 
			=&
			\sum_{\ell=1+r}^{d}\chi_{\ell}(<) \xi(>) 
			+ \sum_{i=1}^{d} \sum_{\ell=i+1}^{d}\chi_{1\ldots i-1}(=)\chi_i(<)\chi_\ell(>).
	\end{align*}
	Finally, we compute the expressions while plugged into $\eqref{signfunc}$. The contribution of terms in the first sum in the latter expression can be computed explicitly 
	using the fact that each slice of $C$ contains each number in $[n]$ exactly once. i.e. $$\sum_{a,b\in T} (\chi_\ell(<)\xi(>))(a,b) = k^{2d-2}\binom{k}{2}\binom{n}{2}.$$
	There are $d-r$ equal contributions from this sum, and so they cancel out since $d-r$ is even. This shows that the signs product for $C$ does not depend on its values at cells, but only depends on its type. Then the second sum turns into
	\begin{align*}
		\sum_{a,b \in T}\left(
		\sum_{i=1}^{d} \sum_{\ell=i+1}^{d}\chi_{1\ldots i-1}(=) \chi_i(<)\chi_\ell(>)
		\right) (a,b)
		&= \sum_{a,b \in T} \left(
		\sum_{\ell=1}^{d} \chi_\ell(>) \sum_{i=1}^{d} \chi_{1\ldots i-1}(=)\chi_i(<)
		\right) (a,b) \\
		&= \sum_{a,b \in T} \sum_{\ell=1}^d [a < b] [a_\ell > b_\ell] \\
		&= \sum_{\ell=1}^d\mathrm{inv}(T_\ell).
	\end{align*}
	Therefore, 
	$$\sgn_1(C)^{d-1}\sgn_2(C)\ldots\sgn_d(C)\mathrm{ssgn}(C) = \mathrm{msgn}(T_1)\cdots\mathrm{msgn}(T_d) = \sgn(T)$$
	and the proof follows.
\end{proof}

\begin{corollary}\label{Latinsigns}
	Let $d > 2$. For every Latin hypercube $C \in \mathcal{C}^{(d)}_{k}$ 
	we have
	$$
		\sgn_{1}(C)^{d-1} \sgn_{2}(C) \cdots \sgn_{d}(C)\, \mathrm{ssgn}(C) = (-1)^{\lfloor \frac{d}{2} \rfloor \lfloor \frac{k}{2} \rfloor  k}.
	$$
\end{corollary}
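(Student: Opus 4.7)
The plan is to deduce the corollary directly from Theorem~\ref{pLatinsigns} applied to the full type $T = [k]^d$. That theorem already tells us the left-hand side equals $\sgn(T)$ and is independent of the choice of Latin hypercube $C \in \mathcal{C}_d(k)$, so the only remaining task is to compute
$$\sgn([k]^d) \;=\; \prod_{\ell=1}^{d} \mathrm{msgn}(T_\ell),$$
where $T_\ell$ is the $\ell$-th coordinate sequence obtained by listing all cells of $[k]^d$ in lexicographic order.

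The second step is to identify each $T_\ell$ explicitly. Under lex order the $\ell$-th coordinate cycles with period $k^{d-\ell+1}$, taking each value $1,2,\ldots,k$ in $k^{d-\ell}$ consecutive positions, and this pattern is then repeated $k^{\ell-1}$ times; hence
$$T_\ell \;=\; \bigl(1^{k^{d-\ell}}\, 2^{k^{d-\ell}}\, \cdots\, k^{k^{d-\ell}}\bigr)^{k^{\ell-1}}.$$
Each weakly-increasing block contributes no inversions, and each of the $\binom{k^{\ell-1}}{2}$ unordered pairs of distinct blocks contributes $\binom{k}{2}\,k^{2(d-\ell)}$ inversions (factor $\binom{k}{2}$ from pairs of values $a>b$, factor $k^{2(d-\ell)}$ from choices of positions within the two blocks). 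Thus
$$\mathrm{inv}(T_\ell) \;=\; \binom{k^{\ell-1}}{2}\binom{k}{2}\, k^{2(d-\ell)}.$$

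The final and only non-routine step is the parity identity
$$\sum_{\ell=1}^{d} \binom{k^{\ell-1}}{2}\binom{k}{2}\, k^{2(d-\ell)} \;\equiv\; \Bigl\lfloor\tfrac{d}{2}\Bigr\rfloor \Bigl\lfloor\tfrac{k}{2}\Bigr\rfloor k \pmod 2,$$
which I expect to be the main (though elementary) obstacle. I would handle it by splitting on the parity of $k$. If $k$ is even then $k^{2(d-\ell)}$ is divisible by $4$ for every $\ell < d$, so modulo $2$ only the $\ell=d$ term survives, and it equals $\binom{k^{d-1}}{2}\binom{k}{2} \equiv (k^{d-1}/2)(k/2) = k^{d}/4 \pmod 2$, which is even whenever $d\ge 3$; the right-hand side is also even since $k\lfloor k/2\rfloor$ is even. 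If $k$ is odd then all $k^{2(d-\ell)}$ are $\equiv 1$, a short expansion of $(2m+1)^{\ell-1}$ modulo $4$ with $m=\lfloor k/2\rfloor$ gives $\binom{k^{\ell-1}}{2} \equiv (\ell-1)\lfloor k/2\rfloor \pmod 2$, and $\binom{k}{2}\equiv \lfloor k/2\rfloor \pmod 2$; the sum collapses to $\lfloor k/2\rfloor^{2}\binom{d}{2} \equiv \lfloor k/2\rfloor\lfloor d/2\rfloor \pmod 2$ via the standard identity $\binom{d}{2} \equiv \lfloor d/2\rfloor \pmod 2$, which agrees with the right-hand side since $k$ is odd.
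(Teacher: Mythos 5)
Your proof is correct and follows the same overall strategy as the paper — reduce via Theorem~\ref{pLatinsigns} to the computation of $\sgn([k]^d)$, identify $T_\ell$, and compute $\mathrm{inv}(T_\ell) = \binom{k^{\ell-1}}{2}\binom{k}{2}k^{2(d-\ell)}$ — so the only place you diverge is in the final parity bookkeeping. The paper organizes this step by the parity of $\ell$, asserting that $\binom{k^{\ell-1}}{2}\equiv\lfloor k/2\rfloor\pmod 2$ for $\ell$ even and $\equiv 0$ for $\ell$ odd; you instead split on the parity of $k$. Your route is arguably cleaner here: the paper's per-$\ell$ claim $\binom{k^{\ell-1}}{2}\equiv\lfloor k/2\rfloor$ is actually false for $k$ even and $\ell\ge 4$ even (e.g.\ $k=2,\ \ell=4$ gives $\binom{8}{2}=28\equiv 0$ but $\lfloor k/2\rfloor=1$), and only the factor $k^{2(d-\ell)}$ rescues the conclusion in most of the even-$k$ cases. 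Your split-by-$k$ argument handles the even-$k$ side directly and avoids this pitfall, while for $k$ odd your identity $\binom{k^{\ell-1}}{2}\equiv(\ell-1)\lfloor k/2\rfloor\pmod 2$ reproduces exactly the paper's even/odd-$\ell$ dichotomy via the clean collapse $\sum_{\ell=1}^d(\ell-1)=\binom{d}{2}\equiv\lfloor d/2\rfloor\pmod 2$. In short: same skeleton, but your endgame is more robust across all parities of $d$ and $k$.
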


\begin{proof}
	By Theorem~\ref{pLatinsigns} for $T=[k]^d$ we have 
	$$\sgn_{1}(C)^{d-1} \sgn_{2}(C) \cdots \sgn_{d}(C) = \sgn([k]^d).$$ 
	Note that $T_\ell = (1^{k^{d-\ell}}, \ldots, k^{k^{d-\ell}})^{k^{\ell-1}}$ for $\ell \in [d]$ (where we denote $x^n = (x,\ldots,x)$ is a sequence of $n$ elements and $(a_1,\ldots,a_n)^m = (a_1,\ldots,a_n,\ldots,a_1,\ldots,a_n)$ is a sequence of $nm$ elements). Then 
	$$
	\mathrm{inv}(T_\ell) = \binom{k^{\ell-1}}{2}\binom{k}{2} k^{2d-2\ell}.
	$$
	If $\ell$ is even, then $\binom{k^{\ell-1}}{2} \equiv \binom{k}{2} \equiv \lfloor \frac{k}{2} \rfloor \, (\textrm{mod}\ 2)$ and if $\ell$ is odd, then $\binom{k^{\ell-1}}{2} \equiv 0\, (\textrm{mod}\ 2)$. Hence,
	$$
		\sum_{\ell=1}^d \mathrm{inv}(T_\ell) \equiv \lfloor d/2 \rfloor \lfloor k/2 \rfloor k  \pmod{2} 
	$$
	and the statement follows.
\end{proof}

\begin{remark}
	Our proof is in the spirit of Janssen's proof \cite{jan95} showing that for every Latin square $L$ of length $k$ 
$$\sgn_{1}(L)\,\sgn_{2}(L)\,\mathrm{ssgn}(L) = (-1)^{\lfloor \frac{k}{2} \rfloor}.$$
\end{remark}

\section{Highest weight spaces}\label{sec:hwv0}
In this section we prove the following result. 
\begin{theorem}\label{thm81}
Let $d\ge 3$ be odd. If $AT_{d}(k) \ne 0$ for even $k$, then 
$g_d(n,k) > 0$ for all $n \le k^{d-1}$.
\end{theorem}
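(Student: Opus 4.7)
The plan is to reduce the positivity of \emph{all} the rectangular Kronecker coefficients $g_d(n,k)$ for $n\le k^{d-1}$ to a single non\-vanishing statement via an exterior-algebra argument, using the element $\omega\in\bigwedge^k V$ with $V=(\mathbb{C}^k)^{\otimes d}$ foreshadowed in the introduction (Theorem~\ref{thm:omega} and Remark~\ref{remod}). Concretely, I would take
\[
\omega \;:=\; \sum_{D}\;\epsilon_D\; e_{D}\;\in\;\bigwedge^{k}V,
\]
where $D$ ranges over the \emph{diagonals} of $[k]^d$ (subsets of size $k$ meeting each slice exactly once), $e_D$ is the wedge of basis vectors $e_a$ for $a\in D$ taken in lexicographic order, and $\epsilon_D=\sgn(\pi_2)\cdots\sgn(\pi_d)$ when $D=\{(j,\pi_2(j),\ldots,\pi_d(j)):j\in[k]\}$. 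Because $k$ is even (this is forced by the hypothesis $AT_d(k)\ne 0$, cf.\ Corollary~\ref{cor:zeroev}), $\omega$ has even degree in the exterior algebra, so it commutes with itself and its powers $\omega^n\in\bigwedge^{nk}V$ are well-defined and in general nonzero.

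The first substantive step is to identify the coefficients of $\omega^n$ in the standard basis of $\bigwedge^{nk}V$: for an $nk$-subset $T\subseteq[k]^d$, the coefficient of $e_T:=\bigwedge_{a\in T}e_a$ in $\omega^n$ vanishes unless $T$ is a magic set, in which case it equals $n!\cdot AT_d(k,T)$ up to a sign depending only on $T$. Granting this, the rest is immediate. For $n=k^{d-1}$ the only magic set of cardinality $k^d$ is $T=[k]^d$ itself, and $\mathcal{C}_d(k,[k]^d)=\mathcal{C}_d(k)$, so the single coefficient of $\omega^{k^{d-1}}$ is (a nonzero scalar times) $AT_d(k)$, which is nonzero by hypothesis; hence $\omega^{k^{d-1}}\neq 0$. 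Since $\omega^{k^{d-1}}=\omega^{n}\wedge\omega^{k^{d-1}-n}$ for every $0\le n\le k^{d-1}$, we deduce $\omega^n\ne 0$ throughout this range. Selecting a magic set $T$ of cardinality $nk$ with $AT_d(k,T)\ne 0$ and invoking Proposition~\ref{Latinidentitygen}, we get $\Delta_T(I_n)=AT_d(k,T)\ne 0$, so $\Delta_T$ is a nonzero element of $\mathrm{Inv}_d(n)_{nk}$ and $g_d(n,k)=\dim \mathrm{Inv}_d(n)_{nk}>0$, as required.

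The main obstacle is the coefficient identification in the first step. Expanding $\omega^n=\sum \epsilon_{D_1}\cdots\epsilon_{D_n}\, e_{D_1}\wedge\cdots\wedge e_{D_n}$ over ordered $n$-tuples of pairwise disjoint diagonals $(D_1,\ldots,D_n)$, the tuples contributing to a fixed magic set $T$ are exactly the decompositions of $T$ into $n$ labelled diagonals, i.e.\ the partial Latin hypercubes $C\in\mathcal{C}_d(k,T)$ (the label $i$ marks $\mathrm{diag}_i(C)$); the $n!$ factor accounts for reorderings of the $D_i$'s. What has to be checked is that, after absorbing the lexicographic-reordering sign turning $e_{D_1}\wedge\cdots\wedge e_{D_n}$ into $\pm e_T$, the accumulated weight $\epsilon_{D_1}\cdots\epsilon_{D_n}$ reproduces the \emph{full sign} $\sgn(C)$ appearing in the definition of $AT_d(k,T)$. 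Our $\epsilon_{D}$'s literally record the symbol sign $\mathrm{ssgn}(C)$, so matching them to $\sgn(C)$ is precisely where Theorem~\ref{pLatinsigns} is used: it converts the intrinsic sign of the diagonals into the product $\sgn_1(C)^{d-1}\sgn_2(C)\cdots\sgn_d(C)$ modulated by $\sgn(T)$, and the factor $\sgn(T)$ is exactly the one produced by the wedge reordering. Once this bookkeeping is in place, the exterior-algebra step and the conclusion are one-liners, and the theorem follows.
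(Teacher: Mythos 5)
Your proof is correct and follows essentially the same core argument as the paper: construct the hyperdeterminantal $k$-form $\omega$, compute $\omega^n$ to see that its coefficient on $\psi_T$ is $\pm\sgn(T)\,AT_d(k,T)$, deduce $\omega^n\ne 0$ for all $n\le k^{d-1}$ from $\omega^{k^{d-1}}=\pm AT_d(k)\,\psi_{[k]^d}\ne 0$, and convert this nonvanishing into positivity of $g_d(n,k)$. You correctly identify that the real work is the coefficient identification (the paper's Theorem~\ref{thm:omega}(ii)) and that Theorem~\ref{pLatinsigns} is the ingredient that reconciles the sign conventions.

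The one genuine difference is your endgame. The paper finishes by observing that $\omega$ is a highest weight vector (Theorem~\ref{thm:omega}(i)), so by Lemma~\ref{lemma:ab} each nonzero $\omega^n$ is a highest weight vector of weight $(k\times n)^d$, hence $g_d(n,k)>0$. You bypass all of this: you pick a magic set $T$ with $AT_d(k,T)\ne 0$, use Proposition~\ref{Latinidentitygen} to get $\Delta_T(I_n)=AT_d(k,T)\ne 0$, and conclude directly that $\Delta_T$ is a nonzero element of $\mathrm{Inv}_d(n)_{nk}$. This is the more elementary route; the paper notes this equivalence in passing (Corollary~\ref{cor85}(1)$\iff$(2)) but opts for the highest-weight formulation because it yields additional structural information ($\omega^n$ as an explicit highest weight vector). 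Two small cosmetic slips in your write-up, neither affecting validity: the coefficient of $\psi_T$ in $\omega^n$ does not carry an $n!$ factor, since partial Latin hypercubes in $\mathcal{C}_d(k,T)$ already label the diagonals by $1,\ldots,n$ and are thus in bijection with \emph{ordered} $n$-tuples of disjoint diagonals; and the factor $\sgn(T)$ is not literally the wedge-reordering sign (which is $C$-dependent through $\sgn_1(C)$) but rather emerges from the combination $\mathrm{ssgn}(C)\sgn_1(C)=\sgn(C)\sgn(T)$ supplied by Theorem~\ref{pLatinsigns}.
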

In particular, the condition $AT_3(k) \ne 0$ implies $g_3(n, k) > 0$ for all $n \le k^2$ and hence by Theorem~\ref{gthm}~(iii) also establishes the positivity $g_d(n,k) > 0$ for all $n \le k^{d-1}$. The proof is based on description of highest weight vectors. 

\subsection{Highest weight vectors}
The group $\Gamma = \mathrm{GL}(k)^{\times d}$ acts on the 
anti-symmetric space $\bigwedge^{m} (\mathbb{C}^{k})^{\otimes d}$ componentwise.
 A vector $v \in  \bigwedge^{m}(\mathbb{C}^{k})^{\otimes d}$ is called a {\it weight vector} if it is rescaled by the action of diagonal matrices, i.e. 
$$
	(\mathrm{diag}(a^{(1)}_{1},\ldots,a^{(1)}_{k}),\ldots, \mathrm{diag}(a^{(d)}_{1},\ldots,a^{(d)}_{k})) \cdot v = (a^{(1)})^{\lambda^{(1)}}\ldots (a^{(d)})^{\lambda^{(d)}} v,
$$ 
where $x^{\alpha} = x_{1}^{\alpha_{1}}\ldots x_{k}^{\alpha_{k}}$;
then $(\lambda^{(1)},\ldots,\lambda^{(d)})$ is the {\it weight} of $v$. The weight vectors of the same weight form a {\it weight space}. 
Any representation of $\Gamma$ can be decomposed into a direct sum of weight spaces. 
We now describe an explicit weight space decomposition from \cite{imw17}. 

We shall view the elements of $[k]^{d}$ ordered lexicographically. Let $P \subseteq [k]^{d}$.
Denote by $s_{\ell}(P, i)$ the number of elements in $P$ whose $\ell$-th coordinate is $i$, i.e. the number of points of $i$-th slice in $\ell$-th direction, and $s_{\ell}(P) := (s_{\ell}(P,1), \ldots, s_{\ell}(P, k))$ are the vectors of {\it marginals}. For $P = \{(x^{(1)}_{i},\ldots,x^{(d)}_{i}) : 1 \le i \le m \}$ we associate the vector
$$
	\psi_{P} := \bigwedge_{i=1}^{m} e_{x^{(1)}_{i}} \otimes\ldots\otimes e_{x^{(d)}_{i}}
$$
where $\{e_{i}\}_{i=1}^{k}$ is the standard basis of $\mathbb{C}^{k}$. The vectors $\{\psi_{P}\}$ over all possible $P \subseteq [k]^{d}$ of cardinality $m$ form a basis of $\bigwedge^{m}(\mathbb{C}^{k})^{\otimes d}$. Moreover, it is straightforward to verify that $\psi_{P}$ is a weight vector of weight $(s_1(P), \ldots, s_{d}(P))$ for the action of $\Gamma$.

Let $V(\lambda)$ be irreducible representation of $\mathrm{GL}(k)$ indexed by partition $\lambda$ (known as Weyl module). 

\begin{lemma}[cf. {\cite[Lemma~2.1]{imw17}}]
Let $\lambda^{(1)},\ldots,\lambda^{(d)}$ be partitions of size $m$ whose first parts are at most $k$. The Kronecker coefficient $g(\lambda^{(1)},\ldots,\lambda^{(d)})$ is equal to the multiplicity of the irreducible representation  $V((\lambda^{(1)})')\otimes\ldots\otimes V((\lambda^{(d)})')$ in the anti-symmetric space $\bigwedge^{m}(\mathbb{C}^k)^{\otimes d}$, where $\lambda'$ is the conjugate of $\lambda$.
\end{lemma}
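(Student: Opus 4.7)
The plan is to identify $\bigwedge^m W$ as a sign isotypic component for a natural $S_m$ action on $W^{\otimes m}$ and then read off its decomposition via Schur--Weyl duality applied on each tensor factor. First I would apply Schur--Weyl duality
\[
(\mathbb{C}^k)^{\otimes m} \;\cong\; \bigoplus_{\mu \vdash m,\,\ell(\mu) \le k} V(\mu) \otimes [\mu]
\]
as a $\mathrm{GL}(k) \times S_m$ bimodule and tensor $d$ such decompositions together to obtain a $\mathrm{GL}(k)^{\times d} \times S_m^{\times d}$ decomposition of $\bigotimes_{i=1}^d (\mathbb{C}^k)^{\otimes m}$ indexed by tuples $(\mu^{(1)},\ldots,\mu^{(d)})$ with $\ell(\mu^{(i)}) \le k$.

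Next I would invoke the canonical reordering of the $md$ copies of $\mathbb{C}^k$
\[
\bigotimes_{i=1}^d (\mathbb{C}^k)^{\otimes m} \;\cong\; \bigl((\mathbb{C}^k)^{\otimes d}\bigr)^{\otimes m} \;=\; W^{\otimes m},
\]
which is $\mathrm{GL}(k)^{\times d}$-equivariant and, crucially, intertwines the diagonal subgroup $S_m \hookrightarrow S_m^{\times d}$ with the $S_m$-action on $W^{\otimes m}$ that permutes the $m$ copies of $W$. Since $\bigwedge^m W \subset W^{\otimes m}$ is precisely the sign-isotypic component of this $S_m$-action, passing to the sign isotypic of the diagonal $S_m$ in the above decomposition yields
\[
\bigwedge^m W \;\cong\; \bigoplus_{\mu^{(1)},\ldots,\mu^{(d)}} \Bigl(\bigotimes_{i=1}^d V(\mu^{(i)})\Bigr) \otimes \mathrm{Hom}_{S_m}\bigl(\mathrm{sgn},\, [\mu^{(1)}]\otimes\cdots\otimes[\mu^{(d)}]\bigr).
\]
Setting $\mu^{(i)} = (\lambda^{(i)})'$, the multiplicity of $V((\lambda^{(1)})')\otimes\cdots\otimes V((\lambda^{(d)})')$ in $\bigwedge^m W$ is therefore the dimension of $\mathrm{Hom}_{S_m}(\mathrm{sgn},\, [(\lambda^{(1)})']\otimes\cdots\otimes[(\lambda^{(d)})'])$ under the diagonal $S_m$-action.

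Finally I would convert this sign multiplicity into a Kronecker coefficient via the identity $[\mu'] \cong [\mu] \otimes \mathrm{sgn}$ of $S_m$-representations, which under the diagonal action gives
\[
[(\lambda^{(1)})']\otimes\cdots\otimes[(\lambda^{(d)})'] \;\cong\; \mathrm{sgn}^{\otimes d} \otimes [\lambda^{(1)}]\otimes\cdots\otimes[\lambda^{(d)}].
\]
For odd $d$ (the case relevant to \textsection\,\ref{sec:hwv0}) one has $\mathrm{sgn}^{\otimes d} \cong \mathrm{sgn}$, so the sign-isotypic dimension of the left-hand side coincides with the trivial-isotypic dimension of $[\lambda^{(1)}]\otimes\cdots\otimes[\lambda^{(d)}]$, which by self-duality of irreducible $S_m$-representations equals $\dim\mathrm{Hom}_{S_m}([\lambda^{(d)}], [\lambda^{(1)}]\otimes\cdots\otimes[\lambda^{(d-1)}]) = g(\lambda^{(1)},\ldots,\lambda^{(d)})$, as desired.

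The main point requiring care is simply the bookkeeping verification that the reordering isomorphism intertwines the diagonal $S_m$ with the $S_m$ permuting the copies of $W$ (together with the parity computation $\mathrm{sgn}^{\otimes d} \cong \mathrm{sgn}$ which is used to dispose of the extra sign twist). Given these, the proof is a direct application of Schur--Weyl duality combined with elementary character theory of $S_m$.
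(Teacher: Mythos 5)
Your argument is correct, and it is essentially the standard proof of this fact: the paper itself does not prove the lemma but cites \cite[Lemma~2.1]{imw17}, whose proof is exactly this Schur--Weyl computation (decompose each $(\mathbb{C}^k)^{\otimes m}$, reorder factors, extract the sign-isotypic component of the diagonal $S_m$, and trade the conjugates for a sign twist). One point worth making explicit: your final step uses $\mathrm{sgn}^{\otimes d}\cong\mathrm{sgn}$, so the argument establishes the statement only for odd $d$ --- which is in fact the correct scope, since for even $d$ the sign twist survives and the multiplicity computes $g((\lambda^{(1)})',\lambda^{(2)},\ldots,\lambda^{(d)})$ instead; the paper leaves the parity hypothesis implicit (all of \textsection\,\ref{sec:hwv0} assumes $d\ge 3$ odd), so your parenthetical restriction to odd $d$ is not a gap but a needed hypothesis.
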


Each irreducible representation $V((\lambda^{(1)})')\otimes\ldots\otimes V((\lambda^{(d)})')$ contains a unique one-dimensional space of weight $(\lambda^{(1)},\ldots,\lambda^{(d)})$ called the {\it highest weight space}, and so $g(\lambda^{(1)},\ldots,\lambda^{(d)})$ is the dimension of the corresponding highest weight space. The {\it highest weight vectors}, which are elements of the highest weight space, can be characterized by the action of the Lie algebra $\mathfrak{g}$ 
of $\Gamma$. 
Let $A \in \mathfrak{g}$ and $I$ be the unit of $\Gamma$. 
Then $\epsilon A+I \in \Gamma$ for sufficiently small $\epsilon$ and 
the action of $A \in \mathfrak{g}$ on $v$ is defined by $A v := \lim_{\epsilon \to 0}((\epsilon A+I)v - v)\epsilon^{-1}$. 
Let $E_{i,j}$ be the $k \times k$ matrix with a single $1$ at entry $(i,j)$ and zero elsewhere. For $i < j$ the operator $v \mapsto (E_{i,j},0,0,\ldots) v$ is called a {\it raising operator} in direction $1$ (which is defined similarly for other directions). Then a weight vector vanishing by all raising operators is a highest weight vector. 

We are interested in the case 
$\lambda^{(1)}=\ldots=\lambda^{(d)} = n \times k$ motivated by the questions in previous sections. Hence we need to consider the weight spaces of weight $(k \times n)^{d}$. That is, the span of vectors $\psi_{P}$ where $P$ runs over magic sets (defined in the previous section), such that $P$ has $n$ elements in each of $k$ slices in any direction.
Define the following subspaces of $\bigwedge^{nk}(\mathbb{C}^{k})^{\otimes d}$:
$$
	\mathcal{B}_{d,k}(n) := \mathrm{span}\{\psi_{P} \mid P \subseteq [k]^d \text{ is a magic set of cardinality }kn \} 
$$
for $n = 0,\ldots,k^{d-1}$. Each $\mathcal{B}_{d,k}(n)$ is the weight space of weight $(k \times n)^{d}$. In particular, $\mathcal{B}_{d,k}(1)$ is a vector space with the basis indexed by permutation hypermatrices and $\mathcal{B}_{d,k}(k^{d-1})$ is a one-dimensional vector space with a single basis vector $\psi_{[k]^{d}}$. 

The action of raising operators on the highest weight subspace of the space $\mathcal{B}_{d,k}(n)$ can be described combinatorially. The generators (in the direction $1$) are of the form $E^{(1)}_{i,i+1} := (E_{i,i+1},0,0,\ldots)$. 
The action is linear and defined on basis vectors as follows:
	$$
		E^{(1)}_{i,i+1} \psi_{P} = \sum_{j=1}^{n} 
		(-1)^{j+1} \delta_{x^{(1)}_{j}, i+1} \,
		e_{i}\otimes e_{x^{(2)}_{j}} \otimes \ldots \otimes e_{x^{(d)}_{j}} 
		\wedge 
		\bigwedge_{\ell=1, \ell \neq j}^{n}  e_{x^{(1)}_{\ell}}\otimes \ldots \otimes e_{x^{(d)}_{\ell}}
	$$
	for any $P = \{ (x^{(1)}_{1},\ldots,x^{(d)}_{1}),\ldots,(x^{(1)}_{n},\ldots,x^{(d)}_{n}) \}$ and $i \in [k-1]$. In other words, for each $p \in P$ with the first coordinate $i+1$, we replace $p$ with the point $p' = p - (1,0,0,\ldots)$ without changing the order of elements in $P$. Denote the new (ordered) set by $P-p+p'$. Then the action of the raising operator can also be written as follows: 
	$$E^{(1)}_{i,i+1} \psi_{P} = \sum_{p = (i+1,\ldots) \in P} \psi_{P-p+p'}.$$
	Note that $P-p+p'$ has $n$ points in each slice except the slices $i$ and $i+1$ in the direction $1$ where it has $n+1$ and $n-1$ elements, respectively. 
	The action or raising operators  in other directions is defined similarly. 
	
	We can combine all raising operators into a single operator $E$ given by $E := \sum_{j=1}^{d}\sum_{i=1}^{k-1} E^{(j)}_{i,i+1}$. Since the images of $E^{(j)}_{i,i+1}$ do not intersect, we have:
	$$v \in \mathcal{B}_{d,k}(n) \text{ is highest a weight vector} \iff E(v) = 0.$$
Alternatively, $v \in \ker E$. Thus, $\dim \ker E|_{\mathcal{B}_{d,k}(n)} = g_d(n,k)$.

\begin{lemma}\label{lemma:ab}
Let $\alpha \in \mathcal{B}_{d,k}(n)$ and $\beta \in \mathcal{B}_{d,k}(m)$ be highest weight vectors and assume $\alpha \wedge \beta \ne 0$. Then $\alpha \wedge \beta \in \mathcal{B}_{d,k}(n + m)$ is also a highest weight vector.
\end{lemma}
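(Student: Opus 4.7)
The plan is to reduce the claim to two standard facts: first, that wedging is compatible with the weight decomposition and with the magic-set structure defining $\mathcal{B}_{d,k}$; and second, that the combined raising operator $E$ acts as a degree-zero derivation on the exterior algebra $\bigwedge^{\ast}(\mathbb{C}^{k})^{\otimes d}$.

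First I would verify that $\alpha \wedge \beta$ actually lies in $\mathcal{B}_{d,k}(n+m)$. Writing $\alpha = \sum_{P} c_P \psi_P$ and $\beta = \sum_{Q} d_Q \psi_Q$, where $P$ ranges over magic sets of cardinality $kn$ and $Q$ over magic sets of cardinality $km$, the wedge $\psi_P \wedge \psi_Q$ is zero whenever $P \cap Q \neq \emptyset$ (repeated basis tensors in an anti-symmetric space) and equal to $\pm \psi_{P \cup Q}$ otherwise. When $P \cap Q = \emptyset$, the union $P \cup Q$ has $n+m$ elements in each slice in every direction and hence is a magic set of cardinality $(n+m)k$. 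Therefore $\alpha \wedge \beta$ is a linear combination of $\psi_R$ over magic sets $R$ of cardinality $(n+m)k$, which exactly means $\alpha \wedge \beta \in \mathcal{B}_{d,k}(n+m)$. (Its weight is $(k\times n)^d + (k\times m)^d = (k\times(n+m))^d$, as expected.)

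Next I would invoke the derivation property of the $\Gamma$-action. Since $\Gamma = \mathrm{GL}(k)^{\times d}$ acts componentwise on $(\mathbb{C}^k)^{\otimes d}$ and then diagonally on $\bigwedge^{\ast}(\mathbb{C}^k)^{\otimes d}$ via $g(u \wedge v) = (gu) \wedge (gv)$, differentiating at the identity shows that every element $X \in \mathfrak{g}$ of the Lie algebra acts on the exterior algebra as a degree-zero derivation:
\begin{equation*}
X(u \wedge v) = (Xu) \wedge v + u \wedge (Xv).
\end{equation*}
Applying this to each generator $E^{(j)}_{i,i+1}$ and summing, the combined raising operator $E = \sum_{j,i} E^{(j)}_{i,i+1}$ is also a derivation. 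Since $\alpha$ and $\beta$ are highest weight vectors, $E(\alpha)=0$ and $E(\beta)=0$, so
\begin{equation*}
E(\alpha \wedge \beta) = E(\alpha)\wedge \beta + \alpha \wedge E(\beta) = 0,
\end{equation*}
which proves that $\alpha \wedge \beta$ is a highest weight vector.

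There is really no serious obstacle here: the proof is two short observations glued by the Leibniz rule. The only point that deserves a moment of care is the claim $\psi_P \wedge \psi_Q = \pm \psi_{P \cup Q}$ for disjoint magic sets, to be sure that the output stays in the magic-set basis of $\mathcal{B}_{d,k}(n+m)$ rather than leaving the weight space; but this is immediate from how $\psi_P$ is defined as an ordered wedge of basis tensors indexed by the elements of $P$.
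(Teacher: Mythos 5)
Your proof is correct and follows essentially the same route as the paper's, whose entire argument is the single Leibniz-rule computation $E(\alpha\wedge\beta)=E(\alpha)\wedge\beta+\alpha\wedge E(\beta)=0$. You merely make explicit the two facts the paper leaves implicit: that the Lie-algebra action on the exterior algebra is a derivation, and that $\psi_P\wedge\psi_Q=\pm\psi_{P\cup Q}$ for disjoint magic sets so that $\alpha\wedge\beta$ indeed lies in $\mathcal{B}_{d,k}(n+m)$.
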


\begin{proof}
Checking the action of raising operators $E$ we have 
$$
E(\alpha \wedge \beta) = E(\alpha) \wedge \beta + \alpha \wedge E(\beta) = 0
$$
and the statement follows.
\end{proof}

\subsection{A hyperdeterminantal form} 
Define the following special {\it hyperdeterminantal} element of the space $\mathcal{B}_{d,k}(1)$:
\begin{align}\label{Ddef}
	\omega := \sum_{\pi_{2},\ldots,\pi_{d} \in S_{k}} \sgn(\pi_{2}\ldots \pi_{d}) \bigwedge_{i=1}^{k} e_{i}\otimes e_{\pi_{2}(i)} \otimes\ldots\otimes e_{\pi_{d}(i)}  \in \mathcal{B}_{d,k}(1).
\end{align}

We shall write 
$$
\omega^n := \underbrace{\omega \wedge \cdots \wedge \omega}_{n \text{ times}} \in \mathcal{B}_{d,k}(n).
$$

\begin{theorem}\label{thm:omega}
Let $d \ge 3$ be odd. We have:

(i)  $\omega$ is a unique (up to a scalar) highest weight vector in $\mathcal{B}_{d,k}(1)$. 

(ii) For $k$ even and 
$n \le k^{d-1}$, 
$$
\omega^{n} = \pm \sum_{T} \sgn(T) AT_d(k,T)\, \psi_{T},
$$
where the sum is over all magic sets $T \subseteq [k]^d$ of cardinality $|T| = nk$. 
In particular, for $T = [k]^d$ 
we obtain
$$
\omega^{k^{d - 1}} = \pm AT_d(k)\, \psi_{[k]^d},
$$

(iii) For $k$ odd, $\omega^2 = 0.$
\end{theorem}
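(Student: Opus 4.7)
The plan is to combine dimension counting with a pairing argument. Iterated application of Lemma~\ref{kron}(c) gives $g_d(1,k) = 1$, so the highest weight subspace of $\mathcal{B}_{d,k}(1)$ is at most one-dimensional; it therefore suffices to show that $\omega \neq 0$ and that $E^{(\ell)}_{i,i+1}\omega = 0$ for every $\ell \in [d]$, $i \in [k-1]$. Nonvanishing is automatic: distinct tuples $(\pi_2,\ldots,\pi_d)$ produce distinct magic sets $P_\pi = \{(i,\pi_2(i),\ldots,\pi_d(i)) : i \in [k]\}$, so the defining sum is a sum of linearly independent basis vectors $\psi_{P_\pi}$ with $\pm 1$ coefficients. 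For directions $\ell \ge 2$, I would pair the tuple $(\pi_2,\ldots,\pi_d)$ with $(\pi_2,\ldots,(i,i+1)\circ\pi_\ell,\ldots,\pi_d)$: the coefficient flips sign while a direct check shows the two raised wedges are equal, so contributions cancel. For $\ell = 1$ I would use instead the pairing $(\pi_2,\ldots,\pi_d) \leftrightarrow (\pi_2\circ(i,i+1),\ldots,\pi_d\circ(i,i+1))$; here the coefficient picks up $(-1)^{d-1} = 1$ because $d$ is odd, while the two raised wedges differ by the transposition of the $i$-th and $(i+1)$-th wedge factors. Either way paired terms cancel, and $\omega$ is the (unique up to scale) highest weight vector.

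\textbf{Part (ii).} Expanding $\omega^n$ gives a sum over ordered $n$-tuples of $(d-1)$-tuples $(\pi^{(1)},\ldots,\pi^{(n)})$, each weighted by $\prod_{j=1}^n \sgn(\pi^{(j)})$ and a wedge of $nk$ elementary tensors. To each such tuple I would associate its $n$ diagonals $D_j = \{(i,\pi_2^{(j)}(i),\ldots,\pi_d^{(j)}(i)) : i \in [k]\}$. The wedge vanishes unless the $D_j$ are pairwise disjoint, in which case their union $T$ is a magic set of cardinality $nk$ and the data precisely encodes a partial Latin hypercube $C \in \mathcal{C}_d(k,T)$ with $C|_{D_j}\equiv j$; by definition $\prod_j \sgn(\pi^{(j)}) = \mathrm{ssgn}(C)$. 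I would then rearrange the wedge into the lexicographic order of $T$ in two steps: first a shuffle grouping the factors by first coordinate, of sign $\sigma_{k,n} := (-1)^{\binom{n}{2}\binom{k}{2}}$ (the shuffle sign of $n$ copies of $(1,\ldots,k)$), followed by sorting within each direction-$1$ slice, of total sign $\sgn_1(C)$. This yields
$$
\omega^n \;=\; \sigma_{k,n} \sum_T \psi_T \sum_{C \in \mathcal{C}_d(k,T)} \sgn_1(C)\,\mathrm{ssgn}(C).
$$
Now I would invoke Theorem~\ref{pLatinsigns} with $d$ odd (so $\sgn_1(C)^{d-1}=1$) together with $\sgn(C) = \sgn_1(C)\cdots\sgn_d(C)$ to obtain $\sgn_1(C)\,\mathrm{ssgn}(C) = \sgn(T)\sgn(C)$, which collapses the inner sum to $\sgn(T)\,AT_d(k,T)$ and produces the claimed formula with $\pm = \sigma_{k,n}$. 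The case $T = [k]^d$ is then immediate.

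\textbf{Part (iii) and main obstacle.} Part (iii) is instant from graded commutativity of the exterior algebra: since $\omega$ has odd degree $k$ in $\bigwedge (\mathbb{C}^k)^{\otimes d}$, one has $\omega\wedge\omega = (-1)^{k^2}\omega\wedge\omega = -\omega\wedge\omega$, forcing $\omega^2 = 0$. The main difficulty of the whole theorem will be the sign bookkeeping in part (ii): one must track the shuffle sign $\sigma_{k,n}$, the direction-$1$ slice-sorting sign $\sgn_1(C)$, and then apply Theorem~\ref{pLatinsigns} at exactly the right moment to simplify $\sgn_1(C)\,\mathrm{ssgn}(C)$ into $\sgn(T)\sgn(C)$. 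The parity $d$ odd is used essentially twice --- to make the direction-$1$ pairing cancel in part (i), and to trivialize $\sgn_1(C)^{d-1}$ in part (ii).
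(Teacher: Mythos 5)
Your proposal is correct and follows essentially the same strategy as the paper: dimension count $g_d(1,k)=1$ plus raising-operator cancellation for (i), expansion of $\omega^n$ into magic sets with the partial-Latin-hypercube dictionary and an appeal to Theorem~\ref{pLatinsigns} for (ii), and odd-degree graded commutativity for (iii). The only cosmetic difference is that you phrase the raising-operator vanishing in (i) as a sign-reversing pairing on tuples (transposing $\pi_\ell$ on the left for $\ell\ge 2$, on the right simultaneously in all coordinates for $\ell=1$) where the paper matches up the two $\pi$'s contributing to a fixed $\psi_P$; and in (ii) you explicitly compute the constant $\pm$ as $(-1)^{\binom{n}{2}\binom{k}{2}}$ where the paper only records that it is a $T$-independent sign.
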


\begin{proof}
Let $\mathcal{S} = \{ \pi=(\pi_{1},\ldots,\pi_{d}): \pi \in (S_{k})^d, ~\pi_{1}=\mathrm{id} \}$. 
	For $\pi = (\pi_1,\ldots, \pi_d) \in \mathcal{S}$ denote 
	$\sgn(\pi) := \sgn(\pi_{1}\cdots\pi_{d})$ and 
	$\pi(i) = (\pi_{1}(i),\ldots,\pi_{d}(i)) \in [k]^{d}$. Let us also denote 
	$e_{\pi(i)} := e_{\pi_{1}(i)}\otimes \cdots \otimes e_{\pi_{d}(i)}$ 
	and $e_{\pi}:= \bigwedge_{i=1}^{k}e_{\pi(i)}$. 
	
(i) 	Firstly note that $g_{d}(1, k) = 1$ and hence there is only one highest weight vector in $\mathcal{B}_{d,k}(1)$. Now we need to verify that the action of raising operators vanishes $\omega$. Let us check it on the generators of the form $E^{(1)}_{i,i+1} = (E_{i,i+1},0,0,\ldots)$. 

		For $\pi \in \mathcal{S}$ let $L_{\pi} = \{\pi(1),\ldots,\pi(i-1)\}$, $a_{\pi} = \pi(i)$, $b_{\pi} = \pi(i+1)$ and $R_{\pi} = \{\pi(i+2),\ldots,\pi(k)\}$.
	Applying the raising operator to $\omega$ we get
	\begin{align}\label{actD}
		(E_{i,i+1},0,\ldots,0)\, \omega = \sum_{\pi \in \mathcal{S}}\sgn(\pi)\, \psi_{\{L_{\pi},a_{\pi},b'_{\pi}, R_{\pi}\}}
	\end{align}
	where the set considered as ordered and for $x\in[k]^{d}$ we denote $x' = x - (1,0,\ldots,0)$. Note that the cells $a_{\pi}$ and $b'_{\pi}$ both lie in the  $i$-th slice of the first direction in $[k]^{d}$, and so the last sum contains only wedge products $\psi_{S}$ indexed by (ordered) sets $S$ having two elements in the slice $i$ and no elements in the slice $i+1$. Let us fix a set $P = L \cup \{a', b'\} \cup R$ where $L$ and $R$ are (lexicographically ordered) sets of cells in the slices $1,\ldots,i-1$ and $i+2,\ldots,k$ respectively, and $a < b$ are cells from the slice $i+1$. Consider the coefficient $c_{P}$ at the vector $\psi_{P}$ in the sum \eqref{actD}. There are exactly two tuples of permutations $\pi^{1}$ and $\pi^{2}$ in $\mathcal{S}$ which contribute to coefficient at $\psi_P$, defined as $L_{\pi^{1}} = L_{\pi^{2}} = L$, $R_{\pi^{1}} = R_{\pi^{2}} = R$ and 
	\begin{align*}
	&a_{\pi^{1}}=\pi^{1}(i)=a', \quad b_{\pi^{1}}=\pi^{1}(i+1)=b, \\
	&a_{\pi^{2}}=\pi^{2}(i) = b'~,\quad b_{\pi^{2}}=\pi^{2}(i+1) = a.
	\end{align*}	
	First, we observe that $c_{P} = \sgn(\pi^{1}) - \sgn(\pi^{2})$, since 
	$$
		c_{P}\, \psi_{P} = \sgn(\pi^{1})\, \psi_{\{L, a', b', R\}} + \sgn(\pi^{2})\, \psi_{\{L, b', a', R\}} = (\sgn(\pi^{1}) - \sgn(\pi^{2}))\, \psi_{P}.
	$$	
	Second, let us check that $\sgn(\pi^{1}) = \sgn(\pi^{2})$. Indeed, 
	$$\pi^{1} = (\mathrm{id}, \pi^{1}_{2},\ldots,\pi^{1}_{d}) = (\mathrm{id},(i,i+1)\cdot \pi^{2}_{2}, \ldots, (i,i+1)\cdot\pi^{2}_{d}),$$ 
	where $(i,i+1) \in S_{k}$ is a transposition, 
	and hence, $\sgn(\pi^{1}) = (-1)^{d-1}\sgn(\pi^{2}) = \sgn(\pi^{2})$. Therefore, $c_{P} = 0$ and since $P$ was chosen arbitrarily, we obtain that $(E_{i,i+1},0,\ldots,0)\, \omega = 0$. The same argument works in other directions by rewriting $\omega$ as follows. 
	For odd $d$ the choice of the fixed first direction $\pi_{1} = id$ can be changed, namely, for any $\ell \in [d]$ we have 
\begin{align*}
		\omega 
		&= \sum_{\pi_{2},\ldots,\pi_{d}\in S_{k}} \sgn(\pi_{2}\ldots\pi_{d})\bigwedge_{i=1}^{k}e_{i}\otimes e_{\pi_{2}(i)} \otimes\ldots\otimes e_{\pi_{d}(i)} \\
		&= \sum_{\pi_{2},\ldots,\pi_{d}\in S_{k}} \sgn(\pi_{\ell})^{d-1}\sgn(\pi_{2}\ldots\widehat{\pi_{\ell}}\ldots\pi_{d})\bigwedge_{i=1}^{k}e_{\pi_{\ell}^{-1}(i)}\otimes \ldots\otimes e_{i}\otimes \ldots \otimes e_{\pi^{-1}_{\ell}\pi_{d}(i)} \\
		&= \sum_{\pi_{1},\ldots,\pi_{\ell-1},\pi_{\ell+1},\ldots,\pi_{d}\in S_{k}} \sgn(\pi_{1}\ldots\widehat{\pi_{\ell}}\ldots\pi_{d})\bigwedge_{i=1}^{k}e_{\pi_{1}(i)}\otimes\ldots\otimes e_{i} \otimes\ldots\otimes e_{\pi_{d}(i)}
\end{align*}
(where as usual $~\widehat{\cdot}~$ denotes the absence in the sequence). 

\

(ii) For each $\pi \in \mathcal{S}$ the set ${d}({\pi}) := \{ \pi(1),\ldots,\pi(k) \}$ is a diagonal of  $[k]^{d}$. 
	So we have 
	$
		\omega = \sum_{\pi \in \mathcal{S}} \sgn(\pi)\, e_{\pi}
	$
	where each term with index $\pi$ in the sum corresponds to the diagonal $d({\pi})$. 
	Let us denote $$M_{d,k}(n) :=  \left\{T \subseteq [k]^d : T \text{ is a magic set of cardinality } nk  \right\}.$$
	Consider the expansion 
	\begin{align*}
		\omega^{n} &= \left( \sum_{\pi \in \mathcal{S}} \sgn(\pi)\bigwedge_{i=1}^{k} e_{\pi(i)} \right)^{\wedge n} \\
		&= \sum_{\pi^{1},\ldots,\pi^{n} \in \mathcal{S}}\sgn(\pi^1)\cdots\sgn(\pi^n)\, e_{\pi^{1}}\wedge\ldots\wedge e_{\pi^{n}} \\
		&= 
		\sum_{T\in M_{d,k}(n)}~
		\sum_{\pi^{1},\ldots,\pi^{n} \in \mathcal{S}:~
			\cup_{i=1}^k d({\pi^i}) = T} 
		\sgn(\pi^1)\cdots\sgn(\pi^n) \,
		e_{\pi^{1}}\wedge\ldots\wedge e_{\pi^{n}} 
	\end{align*}	
	since if for some $i<j$ we have $d({\pi^{i}}) \cap d({\pi^{j}}) \neq \varnothing$ then a corresponding term vanishes in the above expansion due to the property of the wedge product; hence we may assume that  $d({\pi^{1}}),\ldots,d({\pi^{n}})$ form a partition of $T$ into diagonals. We can rewrite this sum as follows:
	\begin{align*}
		\omega^n = \sum_{T\in M_{d,k}(n)} ~\sum_{\pi^{1},\ldots,\pi^{n} \in \mathcal{S}:~ \cup_{i=1}^n d({\pi^i}) = T} \sgn(\pi^{1})\cdots\sgn(\pi^{n})\, 
		&e_{\pi^{1}(1)}\wedge\cdots\wedge e_{\pi^{1}(k)}\wedge\\
		&e_{\pi^{2}(1)}\wedge\cdots\wedge e_{\pi^{2}(k)}\wedge\\
		&\ldots\\
		&e_{\pi^{n}(1)}\wedge\cdots\wedge e_{\pi^{n}(k)}.
	\end{align*}  
	Let us fix $T\in M_{d,k}(n)$ and $\pi^{1},\ldots,\pi^{n} \in \mathcal{S}$ such that $\bigcup_{i=1}^{n} d(\pi^{i})=T$ is a set partition. We then construct a partial Latin hypercube $C$ as follows: for each $i\in [n]$ set $C(d({\pi^{i}})) = i$, i.e. each cell of the diagonal $d({\pi^{i}})$ has value $i$ in $C$, and $0$ otherwise. 
	Then by definition $$\mathrm{ssgn}(C) = \sgn(\pi^{1})\cdots\sgn(\pi^{n}).$$ Let $\sigma_{j}$ be the permutation formed by values of $C$ in $j$-th slice of the first direction collected in lexicographical order, then $$\sgn_{1}(C) = \sgn(\sigma_{1})\cdots\sgn(\sigma_{k}).$$ Since the union of the diagonals covers $T$, the corresponding vector term is up to a sign equal to $\psi_{T}$. Let us describe the canonical order of wedge multiples. Note that for a fixed $j\in[k]$ the cells $\pi^{1}(j),\ldots,\pi^{n}(j) \in [k]^{d}$ form the full $j$-th slice of $T$ in the first direction. Let us reorder the wedge multiples $e_{\pi^{1}(j)}, \ldots, e_{\pi^{n}(j)}$ lexicographically by indices, this is achieved via the permutation $\sigma^{-1}_{j}$. Let $a_{j,1},\ldots,a_{j,n} \in T$ be the elements of $T$ in $j$-th slice, written in lexicographical order. Then we have
	$$
	e_{\pi^{1}(j)} \wedge \ldots \wedge e_{\pi^{n}(j)} = \sgn(\sigma_{j})\, e_{a_{j,1}}\wedge e_{a_{j,2}} \wedge \ldots \wedge  e_{a_{j,n}}.
	$$
	Denote the vector term
	\begin{align*}
		\phi_T := ~ &e_{a_{1,1}}\wedge\ldots\wedge e_{a_{k,1}}\wedge\\
		&e_{a_{2,n}}\wedge\ldots\wedge e_{a_{k,2}}\wedge\\
		&\ldots\\
		&e_{a_{1,n}}\wedge\ldots\wedge e_{a_{k,n}}.
	\end{align*}
	Then we obtain
	\begin{align*}
		\omega^{n} = \sum_{T\in M_{d,k}(n)}\left(\sum_{C \in \mathcal{C}_{d}(k,T)} \mathrm{ssgn}(C)\, \sgn_1(C) 
		\right) \cdot  ~ \phi_T.
	\end{align*}
	By Theorem~\ref{pLatinsigns} for odd $d$ 
	we have 
	$$\mathrm{sgn}_{1}(C)\, \mathrm{ssgn}(C) = \mathrm{sgn}_{1}(C)\,\mathrm{sgn}_{2}(C) \cdots \mathrm{sgn}_{d}(C)\, \sgn(T)= \sgn(C)\, \sgn(T).$$ 
	Note that $\phi_{T} = (-1)^{\ell} \psi_{T}$ fo any $T$, where $\ell$ is a constant which does not depend on $T$. 
	Therefore, 
	$$
		\omega^{n} = \sum_{T\in M_{d,k}(n)} \sgn(T)\,  AT_{d}(k,T)\, \phi_T = \pm \sum_{T\in M_{d,k}(n)} \sgn(T)\, AT_{d}(k,T)\, \psi_T.
	$$

\

	(iii) Let us now show that if $k$ is odd, then $\omega^2 = 0$. 
		Choose an arbitrary total order $<$ on $\mathcal{S}$. A straightforward calculation shows that
	\begin{align*}
		\omega^{2} = \sum_{\pi^{1}, \pi^{2} \in \mathcal{S}, \pi^{1} < \pi^{2}} \sgn(\pi^{1})\, \sgn(\pi^{2})\, (e_{\pi^{1}}\wedge e_{\pi^{2}} + e_{\pi^{2}}\wedge e_{\pi^{1}}) = 0
	\end{align*}
	where we used the fact that $e_{\pi^{2}}\wedge e_{\pi^{1}} = (-1)^{k}e_{\pi^{1}}\wedge e_{\pi^{2}} = -e_{\pi^{1}}\wedge e_{\pi^{2}}$. 
\end{proof}

\begin{corollary}\label{cor85}
Let $d \ge 3$ be odd and $2 \le n \le k^{d-1}$. The following statements are equivalent: 

(1) $\omega^n \ne 0$

(2) $AT_d(k, T) = \Delta_T(I_n) \ne 0$  for some magic set $T \subseteq [k]^d$ of cardinality $nk$ (or a $d \times nk$ balanced table)

(3) $\omega^i \in \mathcal{B}_{d,k}(i)$ is a highest weight vector for all $1 \le i \le n$.
\end{corollary}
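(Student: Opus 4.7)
The plan is to assemble the three implications from results already established in this section and in~\textsection\ref{sec:latin}. For the equivalence (1)$\iff$(2), I would split on the parity of $k$. When $k$ is odd, Theorem~\ref{thm:omega}(iii) gives $\omega^{2}=0$ and hence $\omega^{n}=0$ for all $n\ge 2$, while Corollary~\ref{cor:zeroev} forces $\Delta_{T}(I_{n})=0$ for every balanced table $T$ since $d$ and $k$ are both odd with $n>1$; both (1) and (2) therefore fail. When $k$ is even, Theorem~\ref{thm:omega}(ii) expresses $\omega^{n}$ as a signed sum over magic sets $T$ of cardinality $nk$ with coefficient at $\psi_{T}$ proportional to $AT_{d}(k,T)$; since the $\psi_{T}$ indexed by distinct magic sets are part of a basis of $\bigwedge^{nk}(\mathbb{C}^{k})^{\otimes d}$, the vector $\omega^{n}$ vanishes iff every coefficient $AT_{d}(k,T)$ vanishes, and Proposition~\ref{Latinidentitygen} identifies $AT_{d}(k,T)=\Delta_{T}(I_{n})$.

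For (1)$\iff$(3), the direction (3)$\implies$(1) is immediate under the convention that highest weight vectors are nonzero. For (1)$\implies$(3), I would first observe that if $\omega^{i}=0$ for some $1\le i\le n$, then $\omega^{n}=\omega^{i}\wedge\omega^{n-i}=0$, contradicting (1); therefore $\omega^{i}\ne 0$ for every $i$ in that range. Then one inducts on $i$ using Lemma~\ref{lemma:ab}: the base case $\omega^{1}=\omega$ is a highest weight vector by Theorem~\ref{thm:omega}(i), and given that $\omega^{i}$ is a highest weight vector together with the nonvanishing of $\omega^{i+1}=\omega^{i}\wedge\omega$, Lemma~\ref{lemma:ab} promotes $\omega^{i+1}$ to a highest weight vector as well.

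The argument amounts to a clean packaging of earlier results, so no substantial obstacle is expected. The only subtle point is the odd-$k$ case, where (1) and (2) are vacuously false; here (3) is also vacuous under the convention that a highest weight vector must be nonzero, which is the reading that makes the three statements simultaneously equivalent regardless of the parity of $k$.
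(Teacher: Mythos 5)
Your proof is correct and follows the same route as the paper: the equivalence (1)$\iff$(2) comes from the expansion of $\omega^n$ in Theorem~\ref{thm:omega}(ii) together with linear independence of the $\psi_T$ (with the odd-$k$ case vacuous via Theorem~\ref{thm:omega}(iii) and Corollary~\ref{cor:zeroev}), and (1)$\iff$(3) comes from noting that $\omega^n\ne 0$ forces $\omega^i\ne 0$ for all $i\le n$ and then inducting with Lemma~\ref{lemma:ab}. Your explicit parity split on $k$ is a minor but welcome clarification of the paper's terse ``follows directly from the expansion in (ii).''
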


\begin{proof}
(1) $\iff$ (2) follows directly from the expansion in (ii).  

The condition $\omega^n \ne 0$ is equivalent to $\omega^i \ne 0$ for all $i \le n$. Since $\omega$ is a highest weight vector, by Lemma~\ref{lemma:ab}, if $\omega^i \ne 0$ then it is a highest weight vector for all $i \le n$. 
\end{proof}

\begin{corollary}\label{cor:atob}
Let $d\ge 3$ be odd and assume $AT_{d}(k) \ne 0$ for even $k$. Then $\omega^{n} \in \mathcal{B}_{d,k}(n)$ is a highest weight vector for all $1 \le n \le k^{d-1}$. In particular, $g_d(n,k) > 0$ for all $n \le k^{d-1}$.
\end{corollary}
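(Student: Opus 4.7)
The plan is to combine Theorem~\ref{thm:omega}(ii) with Corollary~\ref{cor85} and read off the conclusion almost immediately. First I would note that under the hypothesis $AT_d(k)\ne 0$ (with $k$ even), Theorem~\ref{thm:omega}(ii) applied at $n=k^{d-1}$ gives
\[
\omega^{k^{d-1}} \;=\; \pm\,AT_d(k)\,\psi_{[k]^d} \;\neq\; 0.
\]
Since $\omega^{k^{d-1}} = \omega^n \wedge \omega^{k^{d-1}-n}$ for any $0\le n \le k^{d-1}$, a nonzero top power forces every intermediate power $\omega^n$ to be nonzero as well; in particular $\omega^n \in \mathcal{B}_{d,k}(n)$ is a nonzero element of the weight space of weight $(k\times n)^d$ for all $n$ in this range.

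Next I would invoke Corollary~\ref{cor85}, specifically the implication (1)$\Rightarrow$(3): since $\omega^n\ne 0$, each $\omega^i$ with $1\le i\le n$ is in fact a highest weight vector. Applying this with $n=k^{d-1}$ shows that every $\omega^i$, for $1\le i \le k^{d-1}$, is a highest weight vector in $\mathcal{B}_{d,k}(i)$.

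Finally, to conclude the positivity of $g_d(n,k)$, I would recall that $g_d(n,k)$ equals the dimension of the highest weight subspace of $\mathcal{B}_{d,k}(n)$ for the weight $(k\times n)^d$, as explained right before Lemma~\ref{lemma:ab}. Since $\omega^n$ is a nonzero vector in this space, we obtain $g_d(n,k) \ge 1$ for all $1\le n \le k^{d-1}$ (and the case $n=0$ is trivial). There is no real obstacle here: all the work is done by Theorem~\ref{thm:omega} and Corollary~\ref{cor85}; the only thing to verify explicitly is the elementary wedge-algebra fact that if $\omega^N\ne 0$ then $\omega^n\ne 0$ for every $n\le N$, which is immediate from $\omega^N = \omega^n \wedge \omega^{N-n}$. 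This then establishes Theorem~\ref{thm81}.
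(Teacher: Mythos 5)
Your argument is correct and is essentially what the paper intends: the corollary is stated immediately after Theorem~\ref{thm:omega} and Corollary~\ref{cor85} with no separate proof, precisely because it follows by the chain you describe (nonvanishing of $\omega^{k^{d-1}}$ from Theorem~\ref{thm:omega}(ii), the wedge factorization to get $\omega^n\ne 0$ for all smaller $n$, then Corollary~\ref{cor85}(1)$\Rightarrow$(3) and the identification of $g_d(n,k)$ with the dimension of the highest weight subspace). No gaps or deviations from the paper's route.
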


This establishes Theorem~\ref{thm81}.


We also have the following weaker statement conditional on the Alon--Tarsi conjecture $AT(k) = AT_2(k) \ne 0$ for even $k$.

\begin{corollary}\label{cor:atk}
Let $d \ge 3$ be odd. Assume $AT(k) \ne 0$ for even $k$. Then $g_{d}(n,k) > 0$ for all $n \le k$. 
\end{corollary}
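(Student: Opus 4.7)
The plan is to mimic the argument of Corollary~\ref{cor:atob}, but to replace the use of $AT_d(k)$ (which came from the magic set $T=[k]^d$ of cardinality $k^d$) with the weaker input $AT(k)=AT_2(k)\neq 0$, which only gives information about a magic set of cardinality $k^2$. Consequently we will only be able to push the argument up to $n=k$ rather than $n=k^{d-1}$.

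First, I would invoke Proposition~\ref{ATtable}: since $AT(k)\neq 0$ and $k$ is even, there exists a $d\times k^2$ balanced table $T$ (the explicit one whose first $d-1$ rows are $1^k2^k\cdots k^k$ and whose last row is $e_k\,e_k\cdots e_k$) such that
\[
\Delta_T(I_k) \;=\; AT(k) \;\neq\; 0.
\]
Viewing $T$ as a magic set in $[k]^d$ of cardinality $k\cdot k = k^2$, this verifies condition (2) of Corollary~\ref{cor85} in the case $n=k$. Therefore $\omega^{k}\neq 0$ in $\mathcal{B}_{d,k}(k)$.

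Next, I would deduce that $\omega^{n}\neq 0$ for every $1\le n\le k$: indeed, if $\omega^{n}=0$ for some such $n$, then $\omega^{k}=\omega^{n}\wedge \omega^{k-n}=0$, contradicting the previous step. By Theorem~\ref{thm:omega}~(i), $\omega$ is a highest weight vector in $\mathcal{B}_{d,k}(1)$, so by iterated application of Lemma~\ref{lemma:ab} each nonzero power $\omega^{n}\in \mathcal{B}_{d,k}(n)$ is itself a highest weight vector of weight $(k\times n)^d$. Equivalently (via the weight space description recalled in \textsection\ref{sec:hwv0}), this exhibits a nonzero element in the highest weight subspace whose dimension is $g_d(n,k)$. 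Hence $g_d(n,k)\ge 1$ for $1\le n\le k$, and $g_d(0,k)=1$ trivially, which gives the claim.

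There is no serious obstacle here: all the combinatorial and representation-theoretic machinery (the $\omega$ form, the equivalence in Corollary~\ref{cor85}, the Alon--Tarsi identity of Proposition~\ref{ATtable}) has already been established. The only point worth emphasising is that we never need $AT_d(k,T)$ for a magic set of cardinality larger than $k^2$, which is precisely why the ordinary 2-dimensional Alon--Tarsi conjecture suffices to cover the range $n\le k$, and also why the argument stops there.
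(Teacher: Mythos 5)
Your proof is correct and follows essentially the same route as the paper: you use Proposition~\ref{ATtable} to obtain a nonzero evaluation $\Delta_T(I_k)=AT(k)$ from a $d\times k^2$ balanced table, invoke Corollary~\ref{cor85} with $n=k$ to conclude $\omega^k\neq 0$, and then use Lemma~\ref{lemma:ab} to turn the nonvanishing of each lower power $\omega^n$ into a highest weight vector of weight $(k\times n)^d$. The paper's own proof is simply a more terse version of exactly this chain of deductions.
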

\begin{proof}
Let us show that $\omega^k \ne 0$ and hence $\omega^n$ is a highest weight vector for all $n \le k$. By Proposition~\ref{ATtable} for some $d \times k^2$ balanced table $\Delta_T(I_k) \ne 0$ is equivalent to $AT(k) \ne 0$. Hence, $\omega^k \ne 0$ by the results above. 
\end{proof}

\begin{remark}\label{rem:atk}
Hence, $AT(k) \ne 0$ also implies that $g_d(n,k) = g_{d}(k^{d-1} - n, k) > 0$ for $n \le k$ and as noted in Remark~\ref{rmkd} we get $\delta_{d}(k^{d - 1} - n) = (k^{d-1} -n) k$ for all $n \le k$. Of course,  $AT_3(k) \ne 0$ gives much more information on the degree sequence $\delta_d(n)$ as discussed in the introduction.
\end{remark}

\begin{remark}
For $d = 3$ analogous statements on positivity of Kronecker coefficients as in the last corollary can also be obtained from results in \cite{kumar}, see \cite{burg}.
\end{remark}

\begin{remark}
For $n > 1$ let $k_d(n)$ be minimal number $k$ for which there is an invariant $\Delta_T$ of degree $kn$ such that $\Delta_T(I_n) \ne 0$. Note that $k_d(n) \ge \delta_d(n)/n$ and $k_d(n)$ is even, cf. Remark~\ref{remstab}. By Corollary~\ref{cor85} if $\omega^{n} \ne 0$ then $\omega^{n - 1} \ne 0$, and hence if there is an invariant $\Delta_T$ of degree $nk$ with $\Delta_T(I_n) \ne 0$, then there is also an invariant $\Delta_{T'}$ of degree $(n-1)k$ with $\Delta_{T'}(I_{n-1}) \ne 0$. This implies the monotonicity $k_d(n) \ge k_d(n-1)$. Note that such property fails for $\delta_3(n)/n$, e.g. $\delta_3(8)/8 = 3 < \delta_3(7)/7 = 4$.
\end{remark}

\begin{remark}\label{remod}
The dual of the element $\omega$ is actually Cayley's first hyperdeterminant, cf.~Example~\ref{cayley1h}. Let $d$ be odd. For any $x_{1},\ldots,x_{k} \in (\mathbb{C}^{k})^{\otimes d}$ let $X\in (\mathbb{C}^{k})^{\otimes (d+1)}$ be tensor formed via concatenation of $x_{i}$ by slices, i.e. $X = \sum_{i=1}^{n} e_{i} \otimes x_{i}$. Then 
$$
\omega^*(x_1,\ldots, x_k) = \frac{1}{k!} \sum_{\sigma_1,\ldots, \sigma_d \in S_k} \sgn(\sigma_1 \cdots \sigma_d) \prod_{i = 1}^k X_{i, \sigma_1(i), \ldots, \sigma_d(i)}.
$$
The element $(\omega^{*})^{\wedge n}$ can be regarded as a multi-linear skew-symmetric $n k$-form on the space $(\mathbb{C}^{k})^{\otimes d}$ for $n \le k^{d-1}$, as well as a polynomial function on $(\mathbb{C}^{k})^{d+1}$ which is multi-linear and skew-symmetric in slices. This $n k$-form can be calculated as follows:
\begin{align*}
	(\omega^n)^{*}(x_{1},\ldots,x_{n k}) = \sum_{I_{1},\ldots,I_{n}}  \varepsilon_{I_{1}\ldots I_{n}}\, \omega^{*}(x_{i_{11}},\ldots,x_{i_{1k}}) \cdots \omega^{*}(x_{i_{n1}},\ldots,x_{i_{nk}}) 
\end{align*}
where the sum is over subsets $I_{1},\ldots,I_{n} \subseteq [n k]$ with $I_{j} = \{i_{j1} < \ldots < i_{jk}\}$ and $I_1\cup \ldots \cup I_{n} = [n k]$, 
and $\varepsilon_{I_{1},\ldots,I_{n}} = \sgn(i_{11},\ldots,i_{1k}, \ldots, i_{n 1}, \ldots, i_{n k})$.
\end{remark}

\subsection{Unconditional Kronecker positivity} Finally, we show that using the conditional statement in Corollary~\ref{cor:atk} we can obtain the following unconditional result.

\begin{theorem}\label{uncon}
Let $d \ge 3$ be odd and $k$ be even. Then $g_{d}(n, k) > 0$ for all $n \le \sqrt{k}/2 - 1$.
\end{theorem}

\begin{proof}
It is known that for every prime $p \ge 3$ we have $AT(p \pm 1) \ne 0$ (see \cite{dri1, glynn}) and therefore from Corollary~\ref{cor:atk} we have $g_d(n, p\pm 1) > 0$ for all $n \le p \pm 1$. Hence, using the semigroup property of Kronecker coefficients 
we get that $g_d(n, a(p-1) + b(p+1)) > 0$ for any $a, b \in \mathbb{N}$ and all $n \le p - 1$.

For $k \le 8$ the statement can be checked computationally. For $k \ge 10$, we can choose a prime $p \in [\sqrt{k}/2, \sqrt{k}]$ and write $k = \ell p + r$  so that $r \in [0, p-1]$ and $\ell - r$ is even; note that $\ell - r > 0$ since $\ell + 1> k/p  \ge p > r$. Hence we can present $k$ as follows:
$$
k = \ell p + r 
= \frac{\ell + r}{2}(p+1) + \frac{\ell - r}{2}(p-1).
$$
Therefore, using the semigroup property as noted above we obtain that 
$
g_{d}(n, k) > 0
$
for all $n \le p - 1$ and the result follows.
\end{proof}

\begin{remark}
Since it is known that for any sufficiently large $k$ there is a prime in the interval $[\sqrt{k}- o(\sqrt{k}), \sqrt{k}]$, we can get a better bound that $g_{d}(n, k) > 0$ for all $n \le \sqrt{k} - o(\sqrt{k})$.  
\end{remark}


\begin{corollary}
Let $d \ge 3$ be odd and $k$ be even. We have $\delta_d(n) = nk$ for all $k^{d - 1} - \sqrt{k}/2 + 1 \le n \le k^{d -1}$.
\end{corollary}

This completes part of Theorem~\ref{thm:one}.



\newpage

\appendix


\section{Some tables}\label{appa}

\begin{table}[h]
\caption{The table of $\delta_3(n)/n$ for $ 1 \le n \le 16$.} 
{\small
\begin{center}
\begin{tabular}{ c | c c c c c c c c c c c c c c c c} 
  $n$ & 1 & 2 & 3 & 4 & 5 & 6 & 7 & 8 & 9 & 10 & 11 & 12 & 13 & 14 & 15 & 16 \\ 
  \hline
  $\delta_3(n)/n$ & 1 & 2 & 2 & 2 & 3 & 3 & 4 & 3 & 3 & 4 & 4 & 4 & 4 & 4 & 4 & 4
\end{tabular}
\end{center}
}
\end{table}



\begin{table}[h]
\caption{The table of {$g_{d}(n, 2) = \dim \mathrm{Inv}_d(n)_{2n}$} for $0 \le n \le 16$, $d = 3,5,7$. Note that $g_{d}(n,2) = g_{d}(2^{d-1} - n, 2)$ and $g_d(n,2) = 0$ for $n > 2^{d-1}$.}
{\small
\begin{tabular}{ c | l l l} 
  $n \backslash d$ & 3 & 5 & 7 \\ 
  \hline
  $0$ & 1 & 1 & 1 \\
  $1$ & 1 & 1 & 1 \\
  $2$ & 1 & 5 & 21\\
  $3$ & 1 & 11 & 161\\
  $4$ & 1 & 35 & 3341\\
  $5$ & 0 & 52 & 64799\\
  $6$ & 0 & 112 & 1407329\\
  $7$ & 0 & 130 & 27536390\\
  $8$ & 0 & 166 & 482181504\\
  $ 9$ & 0 & 130 & 7403718609\\
  $10$ & 0 & 112 & 99468725538\\
  $11$ & 0 & 52 & 1168191022248\\
  $12$ & 0 & 35 & 12009002387858\\
  $13$ & 0 & 11 & 108266717444858\\
  $14$ & 0 & 5 & 857991447205123\\
  $15$ & 0 & 1 & 5991301282600760\\
  $16$ & 0 & 1 & 36953889463653995\\
\end{tabular}
}
\end{table}

\begin{table}[h]
\caption{The table of {$g_{d}(n, 3) = \dim \mathrm{Inv}_d(n)_{3n}$} for $0 \le n \le 9$, $d = 3,5,7$. Note that $g_{d}(n,3) = g_{d}(3^{d - 1} - n, n)$ and $g_{d}(n,3) = 0$ for $n > 3^{d - 1}$.}
{\small
\begin{tabular}{  c | l l l } 
  $n \backslash d$ & 3 & 5 & 7 \\ 
  \hline
  $0$ & 1 & 1 & 1 \\
  $1$ & 1 & 1 & 1 \\
  $2$ & 0 & 1 & 70\\
  $3$ & 1 & 385 & 636177\\
  $4$ & 1 & 44430 & 9379255543\\
  $5$ & 1 & 5942330 & 215546990657498\\
  $6$ & 1 & 781763535 & 6136455833113627910\\
  $7$ & 0 & 93642949102 & 191473697724924688999920\\
  $8$ & 1 & 9856162505065 & 6100591257296003780834337810\\
  $9$ & 1 & 894587378523908 & 190121112332748795911599731191284\\
\end{tabular}


}
\end{table}

\begin{table}[h]
\caption{The table of {$g_{3}(n, k) = \dim \mathrm{Inv}_3(n)_{kn} = g(n\times k, n \times k, n\times k)$} for $1 \le k \le 6$, $1 \le n \le 8$. Note that $g_{3}(n, k) = g_{3}(k^2 - n, k)$ and $g_{3}(n, k) = 0$ for $n > k^2$.}
{\small
\begin{tabular}{  c | l l l l l l l l} 
  $k \backslash n$ & 1 & 2 & 3 & 4 & 5 & 6 & 7 & 8  \\ \hline
  $1$ & 1 & 0 & 0 & 0 & 0 & 0 & 0 & 0  \\ 
  $2$ & 1 & 1 & 1 & 1 & 0 & 0 & 0 & 0  \\ 
  $3$ & 1 & 0 & 1 & 1 & 1 & 1 & 0 & 1  \\ 
  $4$ & 1 & 1 & 2 & 5 & 6 & 13 & 14 & 18  \\ 
  $5$ & 1 & 0 & 1 & 4 & 21 & 158 & 1456 & 9854  \\ 
  $6$ & 1 & 1 & 3 & 16 & 216 & 9309 & 438744 & 17957625  
\end{tabular}
}
\end{table}

\

\


\begin{table}[h]
\caption{The table of $g_5(n,k) = \dim \mathrm{Inv}_5(n)_{kn}$ for $1 \le k \le 5$, $1 \le n \le 6$. }
{\small
\begin{tabular}{ c | l l l l l l} 
  $k \backslash n$ & 1 & 2 & 3 & 4 & 5 & 6 \\ \hline
  $1$ & 1 & 0 & 0 & 0 & 0 & 0 \\ 
  $2$ & 1 & 5 & 11 & 35 & 52 & 112 \\ 
  $3$ & 1 & 1 & 385 & 44430 & 5942330 & 781763535 \\ 
  $4$ & 1 & 36 & 44522 & 381857353 & 5219755745322 & 87252488565829772   \\ 
  $5$ & 1 & 15 & 6008140 & 5220537438711 & 10916817688177999825 & 36929519748583464067841925   \\ 
\end{tabular}
}
\end{table}

\

\begin{table}[h]
\caption{The table of $g_7(n,k) = \dim \mathrm{Inv}_7(n)_{kn}$ for $1 \le k \le 5$, $1 \le n \le 5$.}
{\small
\begin{tabular}{ c | l l l l l} 
  $k \backslash n$ & 1 & 2 & 3 & 4 & 5  \\ \hline
  $1$ & 1 & 0 & 0 & 0 & 0  \\ 
  $2$ & 1 & 21 & 161 & 3341 & 64799  \\ 
  $3$ & 1 & 70 & 636177 & 9379255543 & 215546990657498 \\ 
  $4$ & 1 & 3362 & 9379321798 & 220746106806871065 & 14446465578705208466014240 \\ 
  $5$ & 1 & 62204 & 215601786541974 & 14446471715159302533654142 & {5370640146091973101847897273759375505}  \\ 
\end{tabular}
}
\end{table}

\

\begin{table}[h]
\caption{The table of $g_9(n,k) = \dim \mathrm{Inv}_9(n)_{kn}$ for $1 \le k \le 5$, $1 \le n \le 4$.}
{\small 
\begin{tabular}{ c | l l l l } 
  $k \backslash n$ & 1 & 2 & 3 & 4  \\ \hline 
  $1$ & 1 & 0 & 0 & 0  \\ 
  $2$ & 1 & 85 & 3151 & 538525   \\ 
  $3$ & 1 & 2331 & 1120708625 & 2001892792552290  \\ 
  $4$ & 1 & 538882 & 2001892904676744 & {127405523125364290107394265} \\ 
  $5$ & 1 & 112063889 & 7776236632415272731072 & {39943390428604984740410365287203655408}   \\ 
\end{tabular}
}
\end{table}

\newpage


\begin{thebibliography}{abcdefghi}
\bibitem[AT92]{at}
N. Alon and M. Tarsi, Coloring and orientations of graphs, Combinatorica {\bf 12} (1992), 125--143.

\bibitem[AY21]{ay}
A. Amanov and D. Yeliussizov, Tensor slice rank and Cayley's first hyperdeterminant,  arXiv:2107.08864 (2021).

\bibitem[BB04]{bb04}
C. Bessenrodt and C. Behns, On the Durfee size of Kronecker products of characters of the symmetric group and its double covers, J. Algebra {\bf 280} (2004), 132--144.

\bibitem[Bre12]{br12}
M. Bremner, On the hyperdeterminant for $2 \times 2 \times 3$ arrays, Linear Multilinear Algebra {\bf 60} (2012), 921--932.

\bibitem[BBS12]{bbs12}
M. Bremner, M. Bickis, M. Soltanifar, Cayley’s hyperdeterminant: A combinatorial approach via representation theory, Linear Algebra Appl. {\bf 437} (2012), 94--112.

\bibitem[BH13]{brh13}
M. Bremner, J. Hu, Fundamental invariants for the action of ${{SL_3 (\mathbb {C})\times SL_3 (\mathbb {C})\times SL_3 (\mathbb {C})}} $ on $3 \times 3 \times 3$ arrays, Math. Comp. {\bf 82} (2013), 2421--2438.

\bibitem[BHO14]{bro14}
M. Bremner  J. Hu, L. Oeding, The $3 \times 3 \times 3$ hyperdeterminant as a polynomial in the fundamental Invariants for ${{SL_3 (\mathbb {C})\times SL_3 (\mathbb {C})\times SL_3 (\mathbb {C})}}$, Math. Comput. Sci. {\bf 8} (2014), 147--156.

\bibitem[BVZ10]{bvz10}
A. Brown, S. Van Willigenburg, M. Zabrocki, Expressions for Catalan Kronecker products. Pacific J. Math. {\bf 248} (2010), 31--48.

\bibitem[B\"ur16]{burg}
P. B\"urgisser, Permanent versus determinant, obstructions, and Kronecker coeffcients, S\'em. Lothar. Combin. {\bf 75} (2016), B75a.

\bibitem[BI17]{bi}
P. B\"urgisser and C. Ikenmeyer, Fundamental invariants of orbit closures, J. Algebra {\bf 477} (2017), 390--434.

\bibitem[BGO+17]{widg}
P. B\"urgisser, A. Garg, R. Oliveira, M. Walter, and A. Wigderson, Alternating minimization, scaling algorithms, and the null-cone problem from invariant theory, arXiv:1711.08039 (2017).

\bibitem[BFG+19]{widg2}
P. B\"urgisser, C. Franks, A. Garg, R. Oliveira, M. Walter, A. Wigderson, 
Towards a theory of non-commutative optimization: geodesic first and second order methods for moment maps and polytopes, 60th IEEE Symposium on Foundations of Computer Science (FOCS) (2019), 845--861, arXiv:1910.12375 (long version).

\bibitem[Cay43]{cay}
A. Cayley, On the theory of determinants, Trans. Cambridge Phil. Soc. VIII (1843), 1--16.

\bibitem[Cay45]{cay2}
A. Cayley, On the theory of linear transformations, Cambridge Math. J. {\bf 4} (1845), 193--209.

\bibitem[CHM09]{chm}
M. Christandl, A. W. Harrow, G. Mitchison, Nonzero Kronecker coefficients and what they tell us about spectra, Comm. Math. Phys. {\bf 270} (2007), 575--585.

\bibitem[Der01]{derk}
H. Derksen, Polynomial bounds for rings of invariants, Proc. Amer. Math. Soc. {\bf 129} (2001), 955--963.

\bibitem[DM20]{dm}
H. Derksen, and V. Makam, An exponential lower bound for the degrees of invariants of cubic forms and tensor actions, Adv. Math. {\bf 368} (2020), Article 107136.

\bibitem[Dri97]{dri1}
A. Drisko, On the number of even and odd Latin squares of order $p + 1$, Adv. Math. {\bf 128} (1997), 20--35.

\bibitem[Dvi93]{dvir93} 
Y. Dvir, On the Kronecker product of $S_{n}$ characters. J. Algebra {\bf 154} (1993), 125--140.

\bibitem[GKZ94]{gkz}
I. M. Gelfand, M. M. Kapranov, A. V. Zelevinsky, Hyperdeterminants. In Discriminants, Resultants, and Multidimensional Determinants, Birkh\"auser, Boston MA, 1994, 444--479.

\bibitem[Gly10]{glynn}
D. Glynn, The conjectures of Alon--Tarsi and Rota in dimension prime minus one, SIAM J. Discrete Math. {\bf 24} (2010), 394--399.

\bibitem[Hil90]{hilb1}
D. Hilbert, \"Uber die Theorie der algebraischen Formen, Math. Ann. {\bf 36} (1890), 473--534.

\bibitem[Hil93]{hilb2}
D. Hilbert, \"Uber die vollen Invariantensysteme, Math. Ann. {\bf 42} (1893), 313--373.

\bibitem[HR93]{hr}
R. Huang and G.-C. Rota, On the relations of various conjectures on Latin squares and straightening coefficients, Discrete Math. {\bf 128} (1994), 225--236.

\bibitem[Ike13]{iken13} 
C. Ikenmeyer, Geometric complexity theory, tensor rank, and Littlewood-Richardson coefficients, PhD diss., Universität Paderborn, 2013.

\bibitem[IMW17]{imw17} 
C. Ikenmeyer, K. D. Mulmuley, M. Walter, On vanishing of Kronecker coefficients, Comput. Complexity {\bf 26} (2017), 949--992. 

\bibitem[Jan95]{jan95} 
J. C. M. Janssen, On even and odd Latin squares, J. Combin. Theory Ser. A {\bf 69} (1995), 173--181. 

\bibitem[Kum15]{kumar}
S. Kumar, A study of the representations supported by the orbit closure of the determinant, Compos. Math. {\bf 151} (2015), 292--312.

\bibitem[LZX21]{LZX21}
X. Li, L. Zhang, H. Xia, Two classes of minimal generic fundamental invariants for tensors, arXiv:2111.07343 (2021).

\bibitem[Luq08]{luq}
J.-G. Luque, Invariants des hypermatrices, Diss. Universit\'e de Marne la Vall\'ee, 2008.	

\bibitem[OEIS]{oeis}
Online Encyclopedia of Integers Sequences, the sequence A176097 at \url{https://oeis.org/A176097}

\bibitem[Val09]{val09}
E. Vallejo, A stability property for coefficients in Kronecker products of complex $S_{n}$ characters. Electron. J. Combin. {\bf 16} (2009), N 22.



\end{thebibliography}
\end{document}